\documentclass{amsart}
  
\usepackage{graphicx}
\usepackage{amscd,amsmath,amsopn,amssymb, amsthm,multicol, wasysym} 
\usepackage{hyperref}
\usepackage[all]{xy}
\usepackage{amscd}
\usepackage{lscape}
\usepackage{slashed}
\usepackage{graphicx}
\usepackage{lscape}
 \usepackage{cancel}
  
\language=0

\newtheorem{theorem}{Theorem}[section]
\newtheorem{lemma}[theorem]{Lemma}
\newtheorem{prop}[theorem]{Proposition}
\newtheorem{corol}[theorem]{Corollary}

\theoremstyle{definition}
\newtheorem{definition}[theorem]{Definition}

\theoremstyle{remark}
\newtheorem{remark}[theorem]{Remark}

\numberwithin{equation}{section}







 \input ulem.sty
 
\language=0
  \textwidth 167mm
  \textheight 240mm
 \setlength{\topmargin}{ -0.5cm}
 \setlength{\oddsidemargin}{0cm}
 \setlength{\evensidemargin}{0cm}

\DeclareMathOperator{\Ad}{Ad}
\DeclareMathOperator{\ad}{ad}

\DeclareMathOperator{\Hgt}{ht}

\DeclareMathOperator{\Ric}{Ric}

\newcommand{\fr}{\mathfrak}
\newcommand{\al}{\alpha}
\newcommand{\be}{\beta}

\newcommand{\bb}{\mathbb}
\newcommand{\cal}{\mathcal}

\DeclareMathOperator{\SO}{SO}
\DeclareMathOperator{\Sp}{Sp}
 \DeclareMathOperator{\SU}{SU}
\DeclareMathOperator{\U}{U}
\DeclareMathOperator{\G}{G}
\DeclareMathOperator{\F}{F}

\DeclareMathOperator{\E}{E}
\DeclareMathOperator{\Ss}{S}

\newcommand{\thickline}{\noalign{\hrule height 1pt}}

\begin{document}   

 \title
{Non-naturally reductive Einstein metrics on exceptional Lie groups}
\author{Ioannis Chrysikos and Yusuke Sakane}
 \address{Masaryk University, Faculty of Science, Department of Mathematics and Statistics, Kotl\'a\v{r}sk\'a 2, Brno  61137, Czech Republic}
  \email{chrysikosi@math.muni.cz}
    \address{Osaka University, Department of Pure and Applied Mathematics, Graduate School of Information Science and Technology, Suita, 
Osaka 565-0871, Japan}
 \email{sakane@math.sci.osaka-u.ac.jp}




 
 \medskip

\begin{abstract}
 Given  an exceptional compact   simple Lie group $G$  we  describe new left-invariant Einstein metrics which are not naturally reductive.  In particular,  we consider fibrations of $G$ over  flag manifolds with a certain kind of isotropy representation and we  construct    the Einstein equation with respect to the induced   left-invariant metrics.  Then  we apply a technique based on  Gr\"obner bases and classify  the real solutions of the  associated algebraic systems.   For  the  Lie group  $\G_2$  we obtain the first known example of a left-invariant Einstein metric, which is not naturally reductive.  Moreover,  for the  Lie groups $\E_7$ and  $\E_8$, we conclude  that there exist non-isometric non-naturally reductive   Einstein metrics, which are   $\Ad(K)$-invariant by different Lie subgroups $K$.  
 
   \medskip
 \noindent 2010 {\it Mathematics Subject Classification.}    53C25, 53C30, 17B20.
 
 \noindent {\it Keywords}:   left-invariant Einstein metrics, naturally reductive metrics,   exceptional Lie groups, flag manifolds  \end{abstract}

\maketitle  

 \section{Introduction}
 In 1979 D'Atri and Ziller \cite{DZ} studied naturally reductive metrics on compact semi-simple Lie groups. They gave a complete classification of such metrics on compact simple Lie groups and described many naturally reductive Einstein metrics. They also asked the following  question (cf. \cite{DZ} Remark p.~62):

 
 \smallskip
 \noindent {\bf Question.} {\it Given a compact   simple Lie group $G$, do  there exist   left-invariant Einstein metrics which are not naturally reductive?}

\smallskip
  The first left-invariant Einstein metrics on a compact  simple  Lie group  which are non-naturally reductive, were discovered for $\SU_{n}$ $(n\geq 6)$ by K.~Mori  in 1994 \cite{Mori}.  He considered the Lie group $\SU_{n}$ as a principal bundle over the generalized flag manifold $\SU_{n}/\Ss(\U_{\ell}\times\U_{m}\times\U_{k})$ $(\ell+m+k=n\geq 2)$  and then he used the reverse of Kaluza-Klein ansatz to describe new left-invariant Einstein metrics.  In 2008,   Arvanitoyeorgos,  Mori and  the second author  proved the existence of new non-naturally reductive  Einstein metrics   for $\SO_{n}$ $(n\geq 11)$, $\Sp_{n}$ $(n\geq 3)$, $\E_6$, $\E_7$ and $\E_8$, using fibrations of a   compact simple Lie group  over a     flag manifold  (K\"ahler C-space) with two isotropy summands  (see \cite{AMS}). More recently, Chen and  Liang \cite{Chen} proved that there is a non-naturally reductive left-invariant Einstein metric on the exceptional Lie group $\F_4$. 

 In this paper  we  describe new non-naturally reductive Einstein metrics on compact simple Lie groups $G$, which can be viewed as principal bundles  over    flag manifolds $M=G/K$ with three isotropy summands and second Betti number $b_{2}(M)=1$.  Hence,   the painted Dynkin diagram of $M$ is   defined  by a pair  $(\Pi, \Pi_{K})$ such that $\Pi\backslash\Pi_{K}=\{\al_{i_{o}}\}$ with  $\Hgt(\al_{i_{o}})=3$  for some  simple root $\al_{i_{o}}$.  Here,   $\Pi=\{\al_1, \ldots, \al_{\ell}\}$  is a basis of simple roots  and  $\Hgt(\al_{j})$   is the height (Dynkin mark) of a simple root $\al_{j}$.  Because the heights of a classical compact simple Lie group   are bounded by $1\leq \Hgt(\al_{i})\leq 2$ for any $i=1, \ldots, \ell$ (c.f. \cite{GOV}),  the examined  Lie groups are necessarily   {\it exceptional},  see  Table 2.  From now on, we shall denote such a  Lie group $G$ by $G(\al_{i_{o}})\equiv G(i_{o})$; then  one can  immediately encode    the isotropy subgroup $K$ via the corresponding  painted Dynkin diagram.  Moreover, the related  reductive decomposition $\fr{g}=\fr{k}\oplus\fr{p}$   induces the left-invariant metrics $\langle \ , \ \rangle$ that we are interested in.  
 
  By extending the notation of \cite{AMS}, and since in our case the painted black simple root $\al_{i_{o}}$ is never connected with  the vertex corresponding to the negative  of  the maximal root $\tilde{\al}:=\Hgt(\al_{1})\al_{1}+\ldots+\Hgt(\al_{\ell})\al_{\ell}$, we agree to say that   $G\equiv G(i_{o})$ is  of   Type  $I_{b}$, $II_{b}$, or $III_{b}$,  if  after deleting the black vertex  the Dynkin diagram splits into {\it one}, {\it two}, or {\it three} components (subdiagrams), respectively.  In \cite{AMS} and for compact simple Lie groups $G$ associated to flag manifolds $M=G/K$ with {\it two} isotropy summands,  it was shown that the {\it new} non-naturally reductive Einstein metrics appear only for the corresponding  classes of Type  $I_{b}$ and $II_{b}$ (for the Types    $I_{a}, II_{a}, III_{a}$ the painted black simple root is connected to $-\tilde{\al}$). In particular,  for such flag manifolds, there are still Lie groups of Types    $III_{a}, III_{b}$  (related to $\SO_{2\ell}$, see Theorem \ref{class}) but these cases   have not been examined yet.   In this study,  we  focus on exceptional flag manifolds and  provide   the existence of {\it new} left-invariant non-naturally reductive Einstein metrics  on simple Lie groups of  all 3 types  $I_{b}$, $II_{b}$ and $III_{b}$.
  
  For convenience,  in   Table 1 and for   any exceptional  compact simple Lie group $G\equiv G(i_{o})$,  we list  the number of    {\it non-naturally reductive} left-invariant Einstein metrics found in \cite{AMS}, including our new Einstein metrics and the Einstein metric constructed by  Chen and  Liang \cite{Chen} (although it does not fit into our types). We   denote this number  by
 $\cal{E}(G)_{\rm non-nn}$ and  also state  the  isotropy subgroup $K$,  the numbers $p, q$ appearing in  the reductive decomposition $\fr{g}=\fr{k}\oplus\fr{p}=\fr{k}_{0}\oplus\cdots\oplus\fr{k}_{p}\oplus\fr{p}_{1}\oplus\cdots\oplus\fr{p}_{q}$  and the type of $G(i_{o})$.
 \smallskip
   {\small{ \begin{center}
{\bf{Table 1.}} Number of   non-naturally reductive left-invariant Einstein metrics on $G$ exceptional.
\end{center}}}
         \begin{center}
   \begin{tabular}{c|l|l|c|c|l}
   $G$ &  notation $G\equiv G(i_{o})$ & isotropy group    $K$ &   $p+q$ & \text{Type} &  $\cal{E}(G)_{\rm non-nn}$  \\
   \thickline
   $\G_2$ & $\G_2(2)$ & $\U_2^{l}\cong\SU_2\times\U_1$ & $2+3$ & $I_{b}$ & ${ 1}$  ${\rm new}$\\
   \hline
   $\F_4$ & $\F_4(2)$     & $\SU_3\times\SU_2\times\U_1$ & $3+3$ &  $II_{b}$ & $  5$ ${\rm new}$\\
     & - &   $\SO_{8}$  & - & - &   $1$     \cite{Chen} \\
    \hline
     $\E_6$ & $\E_6(2)\cong\E_6(4)$ & $\SU_5\times\SU_2\times\U_1$ & $3+2$ & $II_{b}$ & $  4$ \cite{AMS}\\
    & $\E_6(3)$ & $\SU_3\times\SU_2\times\SU_2\times\U_1$ & $4+3$ &   $III_{b}$ &  $9$  ${\rm new}$ \\
     \hline
    $\E_7$ & $\E_7(7)$ & $\SU_7\times\U_1$ & $2+2$ & $I_{b}$ & $  2$   \cite{AMS}\\
    & $\E_7(2)$ & $\SO_{10}\times\SU_2\times\U_1$ & $3+2$ & $II_{b}$ & $  4$ \cite{AMS}   \\
    & $\E_7(5)$ & $\SU_6\times\SU_2 \times\U_1$ & $3+3$ & $II_{b}$ & $  5$ ${\rm new}$ \\
    & $\E_7(3)$ & $\SU_5\times\SU_3\times\U_1$ & $3+3$ & $II_{b}$ & $  7$  ${\rm new}$ \\
    \hline
    $\E_8$ & $\E_8(7)$ &  $\SO_{14}\times\U_1$ & $2+2$ & $I_{b}$   & $  2$ \cite{AMS} \\
   & $\E_8(8)$ & $\SU_8\times\U_1$ & $2+3$ & $I_{b}$ & $  3$ ${\rm new}$ \\
    & $\E_8(2)$ & $\E_6\times\SU_2\times\U_1$ & $3+3$ & $II_{b}$ & $  5$ ${\rm new}$ \\
    \thickline
          \end{tabular}   
   \end{center}

\smallskip        
         
For the   description of the Ricci tensor associated to   a left-invariant metric on a  Lie group $G(i_{o})$ of Type  $I_{b}$, $II_{b}$ and $III_{b}$, we exploit the Lie theoretic description of flag manifolds $M=G(i_{o})/K$ and apply a basic method of Riemannian submersions, see for example Lemma \ref{ric}.   Notice that the induced  left-invariant metrics are given in terms of  5, 6, or 7 parameters, depending on the explicit type of $G=G(i_{o})$.  Hence, the Ricci tensor is complicated and    we often need to   consider new fibrations,  inducing  new left-invariant Riemannian metrics.   Then, a comparison of these metrics with the initial one $\langle \ , \ \rangle$ provides   all the necessary information for an explicitly description of the associated Einstein equation, see for example Lemma  \ref{gg2} or Lemma \ref{A345}. We mention  that this approach   leads   to a uniform description of the Ricci tensor and it has been     successfully   applied  in a series of   works   \cite{AMS, Chry2, CS}.  

After this step, we  proceed to a systematic examination of  the algebraic systems defined by the corresponding 
Einstein equation with respect to $\langle \ , \ \rangle$.  
With the aid of computer, 
we can describe  Gr\"obner bases for algebraic systems and    classify  real  solutions of the corresponding  
Einstein equation.   Finally  based on  Propositions \ref{prop4.6}, \ref{prop5.5}, \ref{prop6.4}, we deduce  which of these solutions induce  {\it new} non-naturally reductive left-invariant Einstein metrics.    We summarize  our results (up to isometry) in the following theorem  (for useful details on the examination of the isometry  problem we refer to \cite{CS, Chry2, Chry}).
  \begin{theorem}\label{THM1}
 (a) The compact  simple Lie group $\G_2$ admits at least  one non-naturally reductive  left-invariant Einstein metric. In particular, this metric  is $\Ad(\U_{2}^{l})$-invariant.
  
 (b)  The compact  simple Lie group $\F_4$ admits at least five new non-naturally reductive and non-isometric  left-invariant Einstein metrics.  These metrics are $\Ad(\SU_{3}\times\SU_2\times\U_1)$-invariant. 
 
(c) The compact  simple Lie group $\E_6$ admits at least nine new non-naturally reductive and non-isometric  left-invariant Einstein metrics. These metrics are $\Ad(\SU_{3}\times\SU_{2}\times\SU_{2}\times\U_{1})$-invariant. 

(d) The compact  simple Lie group $\E_7$ admits at least  twelve new non-naturally reductive and non-isometric left-invariant Einstein metrics.  Five of these metrics are $\Ad(\SU_{6}\times\SU_{2}\times\U_{1})$-invariant and the other seven are $\Ad(\SU_{5}\times\SU_{3}\times\U_{1}))$-invariant.

(e) The compact  simple Lie group $\E_8$ admits at least  eight  new non-naturally reductive and non-isometric left-invariant Einstein metrics. Three of these metrics are $\Ad(\SU_{8}\times\U_{1})$-invariant and  the  other five are $\Ad(\E_{6}\times\SU_{2}\times\U_{1})$-invariant. 
   \end{theorem}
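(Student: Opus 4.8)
The plan is to prove Theorem \ref{THM1} by realizing each exceptional Lie group $G$ as the total space of a principal bundle over an appropriate flag manifold $M = G(i_o)/K$ with three isotropy summands, writing down the family of left-invariant metrics induced by the reductive decomposition $\fr{g} = \fr{k}_0 \oplus \cdots \oplus \fr{k}_p \oplus \fr{p}_1 \oplus \cdots \oplus \fr{p}_q$, and then solving the resulting Einstein system. First I would fix, for each case in Table 1, the painted Dynkin diagram data: the simple root $\al_{i_o}$ with $\Hgt(\al_{i_o}) = 3$, the subset $\Pi_K = \Pi \setminus \{\al_{i_o}\}$, and the type ($I_b$, $II_b$, or $III_b$) according to how the diagram splits. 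From this combinatorial input one reads off $K$, the numbers $p, q$, and the dimensions $d_i = \dim \fr{k}_i$, $d_j' = \dim \fr{p}_j$ of all summands; these dimensions, together with the structure constants $\left[\begin{smallmatrix} k \\ ij \end{smallmatrix}\right]$ relative to the negative Killing form, are the only Lie-theoretic data that enter the Ricci tensor.

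Next I would assemble the Ricci tensor of the left-invariant metric $\langle\ ,\ \rangle = x_0(-B)|_{\fr{k}_0} + \cdots + u_q(-B)|_{\fr{p}_q}$ using the submersion formalism (Lemma \ref{ric}): the base $M = G/K$ carries a $G$-invariant metric determined by the parameters $u_1, \ldots, u_q$, whose Ricci components are known from the flag-manifold literature, and the fiber directions contribute the standard bi-invariant terms plus O'Neill correction terms built from the $\fr{k}$--$\fr{p}$ structure constants. Since the full Ricci tensor in $5$, $6$, or $7$ variables is unwieldy, the key computational device — as indicated in the excerpt around Lemmas \ref{gg2} and \ref{A345} — is to introduce auxiliary fibrations (e.g. over intermediate flag manifolds or by enlarging $K$), compute Ricci there with fewer parameters, and then compare with the original metric to extract clean expressions for the mixed structure constants. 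Setting all diagonal Ricci eigenvalues equal yields a homogeneous polynomial system $\{r_0 = r_1, \ldots\}$; normalizing one parameter (say $x_0 = 1$) turns it into an affine algebraic system over $\bb{R}$.

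The heart of the argument is then the algebraic geometry: for each case I would compute a Gr\"obner basis of the ideal generated by the Einstein system (clearing denominators and removing spurious components where summands degenerate), pass to a univariate polynomial in one remaining variable, and count its real roots in the physically admissible range (all parameters positive); substituting back recovers the other coordinates. This step certifies the existence and the exact count of real positive solutions claimed in parts (a)--(e). Finally, to see that these solutions are genuinely new and mutually non-isometric, I would invoke Propositions \ref{prop4.6}, \ref{prop5.5}, \ref{prop6.4} to exclude the naturally reductive metrics classified by D'Atri--Ziller (a metric of our form is naturally reductive only when its parameters satisfy certain explicit equalities, which one checks the solutions violate), and distinguish the remaining metrics by comparing an isometry invariant such as the scalar curvature normalized to unit volume, together with the known rigidity results for left-invariant metrics on compact simple Lie groups referenced in \cite{CS, Chry2, Chry}; for $\E_7$ and $\E_8$ one also notes that metrics invariant under non-conjugate subgroups $K$ cannot be isometric via an inner automorphism, which handles the cross-comparison between the two families in (d) and (e).

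I expect the main obstacle to be controlling the Gr\"obner basis computation: the Einstein systems in $6$ and $7$ variables (the Type $II_b$ and $III_b$ cases for $\F_4$, $\E_6$, $\E_7$, $\E_8$) produce ideals whose elimination ideals have high degree, so isolating the correct irreducible component (discarding solutions with a vanishing parameter or with two summands forced equal, which would collapse to a previously known metric) and then rigorously counting \emph{real} roots — rather than just complex ones — requires either exact arithmetic with Sturm sequences or interval methods, and care that no admissible solution is lost when denominators are cleared. A secondary difficulty is the bookkeeping of structure constants for the exceptional root systems, where the auxiliary-fibration trick must be set up correctly so that the comparison of metrics is valid; any sign or normalization error there propagates into a wrong Einstein system.
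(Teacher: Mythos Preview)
Your proposal is correct and follows essentially the same approach as the paper: realize $G$ as a principal bundle over the flag manifold $G(i_o)/K$ with three isotropy summands, compute the Ricci tensor of the induced left-invariant metric via Lemma~\ref{ric} after determining the structure constants $A_{ijk}$ through auxiliary fibrations (Lemmas~\ref{gg2}, \ref{A345}, \ref{A456}), solve the resulting polynomial Einstein system by Gr\"obner bases, and then apply Propositions~\ref{prop4.6}, \ref{prop5.5}, \ref{prop6.4} together with scale invariants to certify non-natural-reductivity and non-isometry. The only caveat is your remark that metrics invariant under non-conjugate subgroups cannot be isometric via an inner automorphism: this alone does not settle the cross-comparison in (d) and (e), since outer isometries may exist; the paper handles all isometry questions uniformly through the scale invariant $S^{n/2}V$, which you also mention.
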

A careful  analysis of the Einstein metrics described by Gibbons, L\"u and Pope \cite{Pope} on the compact  simple Lie group $\G_2$,  shows  that they  are naturally reductive, 
which is 
contrary to the claim  appearing  in the introduction of \cite{Chen}.  In particular,  the metric  described in Theorem \ref{THM1} is, 
to the best of our   knowledge,  the first known example of a left-invariant Einstein metric  on $\G_2$, which is {\it not} naturally reductive. Moreover,  another direct conclusion of the present work  is the existence of Lie groups $G$, that is, $\E_7, \E_8$, for which  one can construct non-isometric, non-naturally reductive left-invariant Einstein metrics which are $\Ad(K)$-invariant for different Lie subgroups $K\subset G$.  More examples in this direction occur  now in  combination with the  results  of \cite{AMS, Chen}, that is,  for the Lie groups $\F_4, \E_6$.  We remark, however,  that there are also subgroups $K\subset G$ for which the associated reductive decompositions does not induce any new left-invariant Einstein metric, for $(G=\G_2, K=\U_{2}^{s})$ and   $(G=\F_4, K=\Sp_{3}\times\U_1)$.  Eventually,  the results stated in Table 1   leads us to  conjecture that   Lie groups which can be viewed as principal bundles  over flag manifolds $M=G/K$ with $b_{2}(M)=1$ and $q\geq 4$, admit more non-naturally reductive left-invariant Einstein metrics.  By Theorem \ref{class}, this conjecture may apply for the Lie groups  $\F_4(3)$, $\E_8(3)$, $\E_8(6)$, $\E_7(4)$, $\E_8(4)$, $\E_{8}(5)$   and in the special case $q=2$, for the classical Lie group $\SO_{2\ell}(\ell-2)$ with $\ell\geq 5$ (we mean the case that the painted black root is not connected to $-\tilde{\al}$).

\smallskip
        \noindent{\bf Acknowledgements.}
          The first author gratefully acknowledges support by Grant Agency of Czech Republic (GA\v{C}R),  post-doctoral grant GP14-24642P.   He  also  thanks  Department of  Mathematics at Osaka University  for its hospitality, during a research stay in October 2014, where  a part of this work  written. 
The work was supported by JSPS KAKENHI Grant Number 25400071.

   \section{Invariant metrics and the Ricci tensor}  \label{prel}
 

  Let  $(M=G/K, g)$ be a compact homogeneous Riemannian manifold,  where $G\subset I(M)$ is a  closed subgroup of the isometry group   and $K$ is the isotropy subgroup at a fixed point $o=eK\in M$. Through this paper we shall denote by $B\equiv B_{G}$  the negative of the Killing form of $\fr{g}=T_{e}G$.  Without loss of generality we can assume that  $G$ acts (almost) effectively on $M$ and moreover that it is connected and semi-simple (see \cite{Bes}).  Fix a $B$-orthogonal $\Ad(K)$-invariant   complement  $\fr{m}\perp\fr{k}$ of $\fr{k}$ in $\fr{g}$ such that   $\fr{g}=\fr{k}\oplus\fr{m}$ and $\Ad(K)\fr{m}\subset\fr{m}$.  Then,      $\fr{m}$ is identified with the tangent space $T_{o}(G/K)$  $(o=eK\in G/K)$, and  the  isotropy  representation $\chi : K\to \SO(\fr{m})$ of $K$ coincides with the restriction of the adjoint representation $\Ad_{G}|_{K}$ on $\fr{m}$. Thus we may identify $g$  with an $\Ad(K)$-invariant inner product $( \ , \ ) :  \fr{m}\times\fr{m}\to\bb{R}$ on $\fr{m}$.   Traditionally, we call $(M=G/K, g)$  naturally reductive if there exist $G$ and $\fr{m}$ as above, such that  the endomorphism $\ad(X) : \fr{m}\to\fr{m}$ be skew-symmetric with respect to $( \ , \ )$ for any $X\in\fr{m}$.
  
     In \cite{DZ}, D'Atri and Ziller examined naturally reductive metrics among left invariant metrics on compact Lie groups, in particular in the simple case they presented the complete classification of such metrics. Let us recall some details.      Consider   a compact connected semi-simple Lie group $G$ and let $H$ be a closed subgroup.  We shall write $\fr{h}=\fr{h}_{0}\oplus\fr{h}_{1}\oplus\cdots\oplus\fr{h}_{p}$ for a decomposition of the   Lie algebra  $\fr{h}=T_{e}H$   into its centre $\fr{h}_{0}:=Z(\fr{h})$ and simple ideals $\fr{h}_{i}$, for $i=1, \ldots, p$.  Let $\fr{p}$  be  the orthogonal complement  of $\fr{h}\subset\fr{g}$ with respect to $B$. Then, $\fr{g}=\fr{h}\oplus\fr{p}$ and $\Ad(H)\fr{p}\subset\fr{p}$. 
     Now, the Lie group $G\times H$ acts almost effectively on $G$ with isotropy group the diagonal group $\Delta(H)=\{(h, h) : h\in H\}$.  Thus, $G$ can be viewed as coset $(G\times H)/\Delta(H)$ with  $\fr{g}\oplus\fr{h}=\Delta(\fr{h})\oplus \fr{G}$, where we identify  $\fr{G}\cong  T_{e}(G\times H/\Delta{H})\cong \fr{g}$ via the linear map $\fr{G}\ni (X, Y)\mapsto X-Y\in\fr{g}$. 
              
       \begin{theorem} \label{ziller} \textnormal{(\cite[Thm.~1, Thm.~3]{DZ})}  For any inner product  $b$ on the centre $\fr{h}_{0}$ of $\fr{h}$, the following left-invariant metric on $G$ is  naturally  reductive   with respect to the action $(g, h)y=gyh^{-1}$ of  $G\times H$:
   \[
   \langle \ , \ \rangle=u_{0}\cdot b|_{\fr{h}_{0}}+u_{1}\cdot B|_{\fr{h}_{1}}+\cdots+u_{p}\cdot B|_{\fr{h}_{p}}+x\cdot B|_{\fr{p}}, \quad (u_{0}, u_{1}, \ldots, u_{p}, x\in\bb{R}_{+}).
   \]
    Conversely, if a left invariant metric $\langle \ , \  \rangle$ on a compact {\it simple} Lie group $G$ is naturally reductive, then there exist a closed subgroup $H\subset G$ such that  $\langle \ , \ \rangle$ can be written as above.      
    \end{theorem}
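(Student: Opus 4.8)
\smallskip

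\noindent\emph{Proof plan.} I would prove the two implications separately, in both directions exploiting the identification of $G$ with $(G\times H)/\Delta(H)$ (the isotropy at $e$ is $\Delta(H)$, since $(a,h)$ fixes $e$ iff $a=h$) together with the elementary fact that \emph{any} complement $\fr n$ of $\Delta(\fr h)$ in $\fr g\oplus\fr h$ maps isomorphically onto $\fr g\cong T_eG$ via $\nu\colon(X,Y)\mapsto X-Y$, because $\ker\nu=\Delta(\fr h)$. For the sufficiency direction the starting point is the classical remark that if $Q$ is an $\ad$-invariant (possibly indefinite) symmetric bilinear form on a Lie algebra $\fr l$ that is nondegenerate on a subalgebra $\fr l_0$, then $Q|_{\fr m}$ with $\fr m:=\fr l_0^{\perp_Q}$, \emph{provided it is positive definite}, induces a naturally reductive metric on $L/L_0$: for $X,Y,Z\in\fr m$ one has $Q([X,Y]_{\fr m},Z)=Q([X,Y],Z)=-Q(Y,[X,Z])=-Q(Y,[X,Z]_{\fr m})$, using $\fr m\perp_Q\fr l_0$ and $\ad$-invariance of $Q$.

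I would apply this with $L=G\times H$, $\fr l_0=\Delta(\fr h)$ and the (manifestly $\ad$-invariant, possibly indefinite) form
\[
Q \;=\; x\,B_{\fr g}\big|_{\fr g}\ \oplus\ \big(\beta_0\oplus \beta_1 B_{\fr h_1}\oplus\cdots\oplus \beta_p B_{\fr h_p}\big)\big|_{\fr h},
\]
with $x>0$, the $\beta_i\in\bb R^{*}$ for $i\ge1$, and $\beta_0$ an arbitrary nondegenerate symmetric form on the abelian $\fr h_0$. One then computes $\fr m=\Delta(\fr h)^{\perp_Q}$ explicitly — it is the graph over $\fr g$ of the linear map $X\mapsto -x\,\Phi(\mathrm{pr}_{\fr h}X)$, where $\Phi:=\beta^{-1}\circ B_{\fr g}|_{\fr h}$ and $\beta=\beta_0\oplus\beta_1 B_{\fr h_1}\oplus\cdots\oplus\beta_p B_{\fr h_p}$ is the $\fr h$-part of $Q$ — transports $Q$ back by $\nu^{-1}$, and checks that the resulting left-invariant metric on $G$ is $B_{\fr g}$-diagonal for $\fr g=\fr h_0\oplus\fr h_1\oplus\cdots\oplus\fr h_p\oplus\fr p$ and equals exactly the metric in the statement, the coefficient on $\fr h_i$ being $u_i=x\,\beta_i/(\beta_i+x\,c_i)$ (where $B_{\fr g}|_{\fr h_i}=c_iB_{\fr h_i}$) and the restriction to $\fr h_0$ being an arbitrary positive-definite form; letting some $\beta_i$ be negative — so that $Q$ is genuinely indefinite, which is harmless since only $Q|_{\fr m}$ must be definite — makes every value $u_i\in\bb R_{+}$ attainable. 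An equally good route is to fix instead the ``scaled-diagonal'' complement $\fr n=(\fr p\oplus0)\oplus\bigoplus_{i\ge1}\{(t_iZ,(t_i-1)Z):Z\in\fr h_i\}\oplus\{(Tv+v,Tv):v\in\fr h_0\}$ and verify the identity $\langle[X,Y]_{\fr n},Z\rangle+\langle Y,[X,Z]_{\fr n}\rangle=0$ directly, which uses only the $\ad$-invariance of $B_{\fr g}$ and of the $B_{\fr h_i}$ and the orthogonality of the summands.

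For the necessity direction, assume $G$ simple and write $\langle X,Y\rangle=B_{\fr g}(\Lambda X,Y)$ with $\Lambda$ positive and $B_{\fr g}$-self-adjoint. I would first observe that right translation $R_h$ is an isometry iff $\Ad(h)$ preserves $\langle\ ,\ \rangle_e$, i.e.\ when $[\Lambda,\Ad(h)]=0$; hence $H:=\{h\in G:\Ad(h)\in\Oo(\fr g,\langle\ ,\ \rangle_e)\}$ is a closed subgroup whose Lie algebra $\fr h=\{X\in\fr g:[\Lambda,\ad(X)]=0\}$ is the set of $X$ normalizing every $\Lambda$-eigenspace, in particular a subalgebra. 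By the classical description of the isometry group of a left-invariant metric on a compact simple Lie group (Ozeki, Ochiai--Takahashi; see \cite{DZ}), the identity component of $\Iso(G,\langle\ ,\ \rangle)$ is, up to a finite central kernel, $G\times H$ acting by $(a,h)\cdot y=ayh^{-1}$ with isotropy $\Delta(H)$. Since $\langle\ ,\ \rangle$ is naturally reductive, it is so with respect to some connected transitive subgroup of this group; transporting the relevant reductive complement into $\fr g\oplus\fr h$ by $\nu$ and inserting it into the naturally reductive identity, one deduces that $\Lambda$ commutes with $\Ad(H)$ — hence it is a single scalar $u_i$ on each simple ideal $\fr h_i$ of $\fr h$ — and, crucially, that $\Lambda$ is a single scalar $x$ on $\fr p:=\fr h^{\perp_{B_{\fr g}}}$, while on the centre $\fr h_0=Z(\fr h)$ no constraint arises. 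Collecting the $\Lambda$-eigenspaces according to this description displays $\langle\ ,\ \rangle$ in the asserted form, with $b:=\langle\ ,\ \rangle|_{\fr h_0}$ an arbitrary inner product on $\fr h_0$.

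\emph{Where the difficulty lies.} Sufficiency is essentially linear algebra once the $\ad$-invariant-form lemma is in place. The substantial part is necessity, and there are two pressure points. First, one must control the isometry group tightly enough to force the ambient Lie algebra to be exactly $\fr g\oplus\fr h$ (rather than something larger) and then argue that natural reductivity with respect to the given transitive subgroup really descends to this canonical realization. Second, one must carry out the bookkeeping in the naturally reductive identity that rigidifies $\Lambda$ to a single eigenvalue on $\fr p$ and forces the remaining $\Lambda$-eigenspaces to be ideals of $\fr h$ (hence sums of simple ideals, or central). Handling the centre $\fr h_0$, the one place where a completely arbitrary, non-$B_{\fr g}$-proportional inner product is allowed, also needs a little extra care on both sides of the equivalence.
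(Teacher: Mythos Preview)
The paper does not prove this theorem at all: it is stated as a quotation of D'Atri--Ziller \cite[Thm.~1, Thm.~3]{DZ} and used as a black box throughout. There is therefore no ``paper's own proof'' to compare your proposal against.

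That said, your outline is broadly the D'Atri--Ziller strategy. For sufficiency you use an $\ad$-invariant (possibly indefinite) bilinear form on $\fr g\oplus\fr h$ whose orthogonal complement to $\Delta(\fr h)$ yields the desired metric; this is exactly their construction. For necessity you invoke the Ozeki/Ochiai--Takahashi description of the full isometry group of a left-invariant metric on a compact simple $G$ to pin down the ambient transitive group as $G\times H$, and then argue that natural reductivity forces $\Lambda$ to be scalar on $\fr p$ and on each simple ideal of $\fr h$. This is again the line taken in \cite{DZ}. One point to be careful with: the passage from ``naturally reductive with respect to \emph{some} transitive group'' to ``naturally reductive with respect to $G\times H$ with $H$ as you defined it'' is not automatic; in \cite{DZ} this step uses that any connected transitive subgroup of $\Iso(G)^0$ containing the left translations must already be of the form $G\times L$ for a closed $L\subset H$, and then one enlarges $L$ to $H$ without loss. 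Your sketch gestures at this but does not quite close it.
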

The $\Ad(H)$-invariant orthogonal complement  $\fr{p}$ coincides with the tangent space of  the homogeneous space $G/H$,  i.e.  $\fr{p}\cong T_{o}G/H$. From now on  we assume that  ${\frak p} = {\frak p}_1 \oplus \cdots \oplus {\frak p}_q$ defines an orthogonal  decomposition
of   $\fr{p}=T_{o}(G/K)$   into $q$ irreducible mutually non-equivalent $\mbox{Ad}(H)$-modules 
${\frak p}_j$ $( j = 1, \cdots, q )$.  We also assume  that the ideals $\fr{h}_{i}$ $(i=1, \ldots, p)$ are mutually non-isomorphic with  $\dim {\frak h}_0 \leq 1$.
   With the aim to give a unified expression for the Ricci tensor of a left invariant metric  on $G$ and a $G$-invariant metric on $G/H$, it is useful  to  express  the decomposition  of $\fr{g}$ (and hence also of $\fr{h}$ and $\fr{p}$) as follows:  
\begin{equation}\label{go}
{\frak g} =(\fr{h}_{0}\oplus\fr{h}_{1}\oplus\cdots\oplus\fr{h}_{p})\oplus ({\frak p}_1 \oplus \cdots \oplus {\frak p}_q)=({\frak m}_0
\oplus {\frak m}_1 \oplus \cdots \oplus {\frak m}_p) \oplus ({\frak
m}_{p+1} \oplus \cdots \oplus {\frak m}_{p+q})
\end{equation}
with $\fr{h}=\fr{h}_{0}\oplus\fr{k}_{1}\oplus\cdots\oplus\fr{h}_{p}\cong\fr{m}_{0}\oplus\fr{m}_{1}\oplus\cdots\oplus\fr{m}_{p}$ and ${\frak p} = {\frak p}_1 \oplus \cdots \oplus {\frak p}_{q}\cong{\frak
m}_{p+1} \oplus \cdots \oplus {\frak m}_{p+q}$, respectively.  Then,  the following left-invariant metric on $G$  is in fact
$\mbox{Ad}(H)$-invariant: 
\begin{eqnarray}
 \langle \ , \ \rangle &=&  
u_0\cdot B|_{\mbox{\footnotesize$ \frak h$}_0} +
 u_1\cdot B|_{\mbox{\footnotesize$ \frak h$}_1} + \cdots + 
 u_p\cdot B|_{\mbox{\footnotesize$ \frak h$}_p} + 
x_1\cdot B|_{\mbox{\footnotesize$ \frak p$}_1} + \cdots + 
 x_q\cdot B|_{\mbox{\footnotesize$ \frak p$}_q},\nonumber \\
 &=& y_0\cdot B|_{\mbox{\footnotesize$ \frak m$}_0} +
 y_1\cdot B|_{\mbox{\footnotesize$ \frak m$}_1} + \cdots + 
 y_p\cdot B|_{\mbox{\footnotesize$ \frak m$}_p} + 
y_{p+1}\cdot B|_{\mbox{\footnotesize$ \frak m$}_{p+1}} + \cdots + 
 y_{p+q}\cdot B|_{\mbox{\footnotesize$ \frak m$}_{p+q}}\label{left}
\end{eqnarray}
where 
$u_{a}, x_{b}, y_{c}\in{\mathbb R}_+$ for any $0\leq a\leq p$, $1\leq b\leq q$, $0\leq y_{c}\leq  p+q$. Similarly,   any $G$-invariant Riemannian metric  on $G/H$ is given by 
\begin{eqnarray}
 ( \  , \ )=  
x_1\cdot B|_{\mbox{\footnotesize$ \frak p$}_1} + \cdots + 
 x_q\cdot B|_{\mbox{\footnotesize$ \frak p$}_q}=y_{p+1}\cdot B|_{\mbox{\footnotesize$ \frak m$}_{p+1}} + \cdots + 
 y_{p+q}\cdot B|_{\mbox{\footnotesize$ \frak m$}_{p+q}}.  \label{inv}
\end{eqnarray}

Let up present now a formula for the Ricci tensor associated to the  left-invariant metric $\langle \ , \ \rangle$ on $G$ defined by (\ref{left}), and describe the same time the Ricci tensor of $M=G/H$ with respect to the $G$-invariant metric $( \ , \ )$ given by (\ref{inv}) (see also \cite{Wa2, PS, Chry2}).     Set from now on $d_{i}:=\dim_{\bb{R}}\fr{m}_{i}$ and let $\lbrace e_{\mu}^{i} \rbrace_{\mu=1}^{d_{i}}$ be a $B$-orthonormal basis 
adapted to the decomposition of $\frak g$, i.e., 
$e_{\mu}^{i} \in {\frak m}_i$ for some $i$, 
$\mu < \nu$ if $i<j$  with $e_{\mu}^{i} \in {\frak m}_i$ and
$e_{\nu}^{j} \in {\frak m}_j $, for any $0\leq i, j\leq p+q$.  
 Consider the numbers ${A^\xi_{\mu
\nu}}=B(\left[e_{\mu}^{i},e_{\nu}^{j}\right],e_{\xi}^{k})$ such that
$\left[e_{\mu}^{i},e_{\nu}^{j}\right] 
= {\sum_{\xi}
A^\xi_{\mu \nu} e_{\xi}^{k}}$, and set 
\[
A_{ijk}:=\displaystyle{k \brack {ij}}=\sum (A^\xi_{\mu \nu})^2 
\]
 where the sum is
taken over all indices $\mu, \nu, \xi$ with $e_\mu^{i} \in
{\frak m}_i,\ e_\nu^{j} \in {\frak m}_j,\ e_\xi^{k}\in {\frak m}_k$. 
Then, $A_{ijk}$ are independent of the 
$B$-orthonormal bases chosen for ${\frak m}_i, {\frak m}_j, {\frak m}_k$,
and symmetric in all three indices, i.e. $A_{ijk}=A_{jik}=A_{kij}$.

Note that  $\Ric_{\langle \ , \ \rangle}$ is also an  $\Ad(H)$-invariant symmetric (covariant) tensor. In particular, since $\fr{m}\ncong\fr{m}_{j}$ for any $0\leq i\neq j\leq p+q$, it is $\Ric_{\langle \ , \ \rangle}(\fr{m}_{i}, \fr{m}_{j})=0$ for $i\neq j$, that is,  the Ricci tensor is diagonal.  Now,    for  any $k=0,\ldots, p+q$, the set 
$\{e_{\mu}^{k} / \sqrt{y_{k}}\}_{\mu=1}^{d_{k}}$ 
   forms an $\langle \ , \ \rangle$-orthonormal basis of $\fr{m}_{k}$.  Consider real numbers $r_{k}$  defined by $r_{k}:=\Ric_{\langle \ , \ \rangle}(e_{\mu}^{k} / \sqrt{y_{k}}, e_{\mu}^{k} / \sqrt{y_{k}})=(1/y_{k})\Ric_{\langle \ , \ \rangle}(e_{\mu}^{k}, e_{\mu}^{k})$.  
   Then,  we have $\Ric_{\langle \ , \ \rangle}=\sum_{k=0}^{p+q}y_{k}r_{k}B|_{\fr{m}_{k}}$.

\begin{lemma}\label{ric}\textnormal{(\cite{PS, AMS})}
Let $G$ be a compact connected semi-simple Lie group endowed with the left-invariant metric $\langle \ , \ \rangle$  given  by
  (\ref{left}). Then the components $r_0, r_1, \cdots, r_{p+q}$ 
of the Ricci tensor $\Ric_{\langle  \ , \ \rangle}$ associated to $\langle \ , \ \rangle$, are expressed as follows:
\[
r_k = \frac{1}{2y_k}+\frac{1}{4d_k}\sum_{j,i}
\frac{y_k}{y_j y_i} {k \brack {ji}}
 -\frac{1}{2d_k}\sum_{j,i}\frac{y_j}{y_k y_i} {j \brack {ki}}
 \quad (k= 0, 1,\ \cdots,\ p+q).
\]
Here,   the sums are taken over all $i, j = 0, 1,\cdots, p+q$.  In particular, for each $k$ it holds that
\[
 \sum_{i,j}^{p+q}{j \brack {ki}} =\sum_{i,j}^{p+q}A_{kij}= d_k:=\dim_{\bb{R}}\fr{m}_{k}.
 \]
  For  $k=p+1, \ldots, p+q$ and by considering the sums appearing  in the expression of $r_{k}$ only for $i, j$ with $p+1\leq i, j, \leq p+q$,  one obtains the components ${\bar r}_{p+1}, \cdots, {\bar r}_{p+q}$  of the Ricci tensor ${\bar r}$ of the $G$-invariant  metric $( \ , \ )$ on $G/H$  defined by (\ref{inv}).
\end{lemma}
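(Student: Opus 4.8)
The plan is to reduce the Ricci tensor of $G$ with its left-invariant metric (\ref{left}) to a computation that is local in the sense of Riemannian submersions, using the fact that $G = (G\times H)/\Delta(H)$ is a homogeneous space with the isotropy module $\fr{G}\cong\fr{g}$ decomposing into the pieces $\fr{m}_0,\ldots,\fr{m}_{p+q}$. First I would invoke the general formula for the Ricci curvature of a normal-type metric on a compact homogeneous space $(L/L',\langle\ ,\ \rangle)$ in terms of a $B$-orthonormal basis adapted to a decomposition of the isotropy module into irreducible summands — this is exactly the formula of Park--Sakane and the one recalled in \cite{AMS, Wa2, PS}. Applied to $L=G\times H$, $L'=\Delta(H)$, the structure constants of $\fr{g}\oplus\fr{h}$ split according to $\Delta(\fr{h})\oplus\fr{G}$, and a short bookkeeping identifies the Casimir-type terms on $\fr{G}\cong\fr{g}$ with the bracket numbers $A_{ijk}={k\brack ij}$ defined on $\fr{g}$ itself; the pieces coming from $\Delta(\fr{h})$ contribute nothing because the metric is bi-invariant in those directions and the relevant mixed structure constants vanish by the $\Ad(H)$-invariance of each $\fr{m}_i$. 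This yields the stated expression
\[
r_k=\frac{1}{2y_k}+\frac{1}{4d_k}\sum_{i,j}\frac{y_k}{y_jy_i}{k\brack ji}-\frac{1}{2d_k}\sum_{i,j}\frac{y_j}{y_ky_i}{j\brack ki},
\]
the first term being the contribution of the $\tfrac12$-factor familiar from the bi-invariant background on $G$, the second the ``curvature of the fibers/base'' term, and the third the $\Ad$-invariance correction; symmetry $A_{ijk}=A_{jik}=A_{kij}$, already recorded above, is what makes the two sums well defined independently of the chosen bases.

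Next I would establish the normalization identity $\sum_{i,j}A_{kij}=d_k$. This follows from the Killing form being $\ad$-invariant: for fixed $k$, expanding $\sum_{i,j}\sum_{\mu,\nu,\xi}B([e^k_\alpha,e^i_\mu],e^j_\nu)^2$ and using $B([X,Y],Z)=-B(Y,[X,Z])$ together with completeness of the $B$-orthonormal basis $\{e^i_\mu\}$ of all of $\fr{g}$, one gets $\sum_{i,\mu}B([e^k_\alpha,e^i_\mu],[e^k_\alpha,e^i_\mu]) = -B(\operatorname{ad}(e^k_\alpha)^2 \cdot \mathrm{Id}) = B(e^k_\alpha,e^k_\alpha)=1$ for each basis vector $e^k_\alpha\in\fr{m}_k$ — here one uses that $B$ is (minus) the Killing form of $\fr g$, so $-\operatorname{tr}(\operatorname{ad}X\,\operatorname{ad}Y)=B(X,Y)$; summing over $\alpha=1,\ldots,d_k$ gives $d_k$. (Equivalently this is the trace of the Casimir operator of $\fr g$ acting on $\fr m_k$ with respect to $B$ itself, which is $d_k$.)

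Finally, for the statement about $G/H$: restricting attention to $k\in\{p+1,\ldots,p+q\}$ and to the sub-sums over $i,j\in\{p+1,\ldots,p+q\}$ amounts to discarding precisely the terms involving the fiber directions $\fr m_0,\ldots,\fr m_p$, and the resulting expression is the standard Park--Sakane/Wang--Ziller formula for the Ricci tensor of the $G$-invariant metric $(\ ,\ )$ on $G/H$ given by (\ref{inv}); one can cite \cite{PS, Wa2} directly here, or observe that the submersion $G\to G/H$ with totally geodesic fibers (for metrics of the form (\ref{left})) makes the $O'Neill$ correction terms exactly the dropped pieces. The main obstacle I anticipate is not conceptual but organizational: one must carefully track which structure constants survive after passing from $\fr g\oplus\fr h$ to the quotient $\fr G\cong\fr g$ and verify that the diagonal block $\Delta(\fr h)$ genuinely decouples — i.e. that $\Ric_{\langle\ ,\ \rangle}(\fr m_i,\fr m_j)=0$ for $i\neq j$, which is where the hypothesis that the $\fr h_i$ are mutually non-isomorphic and the $\fr p_j$ mutually inequivalent $\Ad(H)$-modules (so that by Schur's lemma the $\Ad(H)$-invariant symmetric tensor $\Ric$ is forced to be diagonal in the decomposition (\ref{go})) is essential.
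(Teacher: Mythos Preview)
The paper does not give its own proof of this lemma: it is stated with the attribution ``(\cite{PS, AMS})'' and used as a black box throughout. So there is nothing to compare your argument against within the paper itself; the authors simply import the formula from Park--Sakane and Arvanitoyeorgos--Mori--Sakane.

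That said, your sketch is essentially correct and matches the spirit of the cited sources. A few remarks. Your derivation of the normalization identity $\sum_{i,j}A_{kij}=d_k$ is right, though the notation ``$-B(\operatorname{ad}(e^k_\alpha)^2\cdot\mathrm{Id})$'' is garbled: what you mean is that, since $B$ is $\ad$-invariant and positive definite, $\ad(X)$ is $B$-skew, so $\sum_{\beta}B(\ad(X)f_\beta,\ad(X)f_\beta)=-\tr(\ad(X)^2)=B(X,X)$ for any $B$-orthonormal basis $\{f_\beta\}$ of $\fr g$; setting $X=e^k_\alpha$ and summing over $\alpha$ gives $d_k$. For the main Ricci formula, routing through $(G\times H)/\Delta(H)$ is one legitimate path, but it is not needed: one can apply directly the standard expression for the Ricci tensor of a left-invariant metric on a compact (hence unimodular) Lie group in a $B$-orthonormal basis, and then use the $\Ad(H)$-invariance together with the inequivalence hypotheses on the $\fr m_i$ to force diagonality. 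This is closer to how \cite{PS,AMS} actually proceed and avoids the bookkeeping you flag as the ``main obstacle''. Finally, your last paragraph about $G/H$ is fine, but note that the fibers of $G\to G/H$ are \emph{not} totally geodesic for general metrics of the form (\ref{left}); the point is rather that the homogeneous-space Ricci formula for $(G/H,(\ ,\ ))$ is structurally identical, with the sums simply restricted to $p+1\le i,j\le p+q$.
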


  \section{Flag manifolds with  second Betti number $b_{2}(M)=1$ and  Lie groups os Type I, II, or III}\label{algebraic}
 Let $G$ be a compact connected simple Lie group with finite centre and  Dynkin diagram $\Gamma(\Pi)$, where $\Pi$ denotes a  basis of simple roots.  We  are interested in $K$-principal fibrations of $G$ over flag manifolds $M=G/K$  with the aim to   build  left-invariant metrics on $G$ via a metric on  the base $G/K$ and a metric on the fiber $K$.  For a Lie theoretic description of flag manifolds in terms of painted Dynkin diagrams we refer to \cite{Ale, Chry2, Chry, CS}.

 \subsection{Lie groups of Type $I$, $II$ or $III$}It is well-known  (see \cite{Chry, PhD, CS})  that by painting  black on $\Gamma(\Pi)$ the vertex of   a simple root, say $\al_{i_{o}}$  for some $1\leq i_{o}\leq \ell$,   we define  a flag manifold $M=G/K$ whose isotropy representation decomposes into $q:=\Hgt(\al_{i_{o}})\in\bb{Z}_{+}$ mutually inequivalent, irreducible $\Ad(K)$-submodules, i.e.
 \[
 \fr{p}=T_{o}M=\fr{p}_{1}\oplus\cdots\oplus\fr{p}_{q}.
 \]
  In fact, these are the flag manifolds $M=G/K$ with $b_{2}(M)=1$ and their classification can be found for example in \cite[Table 1]{CS}.  
  Because $M=G/K$ is just defined   by fixing a simple root $\al_{i_{o}}$, we will denote the corresponding Lie group $G$ by $G(i_{o})\equiv G(\al_{i_{o}})$.  We shall  write $G(i)\cong G(j)$ when  the flag manifolds obtained by the subsets $\Pi_{M}=\{\al_{i}\}$ and $\Pi'_{M}=\{\al_{j}\}$, are isomorphic.

Given the   Dynkin diagram $\Gamma(\Pi)$ of $G$,     by deleting a vertex we obtain  at most three components (subdiagrams), with the 3-component  case appearing only for the $G\cong \E_6, \E_7, \E_8$ or $\SO_{2\ell}$. These components  correspond to the  Dynkin  diagrams which define the semi-simple part of $K$.  Hence, we have the same number of simple  ideals in $\fr{k}$ and  components in $\Gamma(\Pi)$ after deleting $\al_{i_{o}}$, and as long as this number increases it brakes up the symmetry of $G(i_{o})$.    The centre $\fr{k}_{0}\cong\fr{u}_{1}$ corresponds to the black vertex and we shall use also a double circle \ $\begin{picture}(0, 2)(0, 0)
     \put(2,2.8){\circle{6}}
\put(2,2.8){\circle{3}}\end{picture}$ \  \  to  denote the negative of the maximal root $\tilde{\al}=\sum_{i=1}^{\ell}\Hgt(\al_{i})\al_{i}$, with respect to the fixed basis $\Pi$. Finally,  in the splitting $\fr{k}=\fr{k}_{0}\oplus\fr{k}_{1}\oplus\cdots\fr{k}_{p}$ of the isotropy subalgebra $\fr{k}$ into its centre and simple idelas, we agree to denote by $\fr{k}_{1}$ the  subalgebra  whose    Dynkin  diagram is  connected with $-\tilde{\al}$.   
  \begin{definition}\label{types}
   We separate the compact simple Lie groups $G\equiv G(i_{o})\equiv G(\al_{i_{o}})$ $(1\leq i_{o}\leq \ell)$ into three types (and similarly   the corresponding flag  manifolds $M=G/K$ with $b_{2}(M)=1$), namely Type $I$, $II$, or $III$, depending on the number of  components (namely one, two or three components) on  the Dynkin diagram $\Gamma(\Pi)$ of $G(i_{o})$,  after deleting the black vertex corresponding to $\al_{i_{o}}$.
  In particular, we shall write $I(q), II(q)$, or $III(q)$, where $q=\Hgt(\al_{i_{o}})$ coincides with the number of the isotropy summands of $M=G/K$. We further divide each of these  three types, into two  subclasses by inserting a subscript $a$ or  $b$, e.g.   Type $I_{a}(q)$ or Type $I_{b}(q)$, depending whether the black vertex is connected to  $-\tilde{\al}$, or not.
  \end{definition}  

Let us classify now  the compact simple Lie groups    with respect  to  the above 6 types. We mention that  maybe  not all  the types appear for any $q$ with $1\leq q\leq 6$, or for some of them just a few Lie groups exist. For example 
\begin{prop}
The unique compact simple Lie group $G=G(i_{o})=G(\al_{i_{o}})$  of Type $III_{a}(q)$ for some $q$ $(1\leq q\leq 6)$, is the Lie group $\SO_{8}(2)$ with $q=2$.
\end{prop}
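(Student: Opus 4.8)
The plan is to classify, among all connected compact simple Lie groups $G$, those painted Dynkin diagrams $(\Pi,\Pi_K)$ with $\Pi\setminus\Pi_K=\{\alpha_{i_o}\}$ for which (i) deleting the black vertex $\alpha_{i_o}$ breaks $\Gamma(\Pi)$ into exactly three connected components, and (ii) the black vertex is adjacent to the affine node $-\tilde\alpha$ in the extended Dynkin diagram $\widehat\Gamma(\Pi)$. Condition (i) forces $G$ to be one of $\SO_{2\ell}$ ($D_\ell$), $\E_6$, $\E_7$, or $\E_8$, since in every other Dynkin diagram deleting a single node yields at most two components; this is precisely the remark already invoked in the text before Definition~\ref{types}. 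So the proof reduces to a finite case check over these four root systems together with the position of $-\tilde\alpha$.

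First I would draw the extended Dynkin diagrams: for $D_\ell$ the affine node attaches to the branch node near one fork (it attaches to $\alpha_2$ in the standard Bourbaki labelling, creating a second fork at that end), for $E_6$ it attaches to the end of the longest leg (at $\alpha_2$'s opposite side, giving the symmetric $\widehat E_6$), and for $E_7,E_8$ it attaches at the end of a leg as well. Then, for each candidate $\alpha_{i_o}$, I record (a) how many components remain after deleting $\alpha_{i_o}$ from $\Gamma(\Pi)$ — this must be $3$ — and (b) whether $\alpha_{i_o}$ is a neighbour of $-\tilde\alpha$ in $\widehat\Gamma(\Pi)$ — this must hold for Type $a$. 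Running through $E_6$: the only node whose removal gives three components is the central branch node $\alpha_4$ (height $3$, giving $A_2\times A_1\times A_1$ in $K$), but $\alpha_4$ is not adjacent to the affine node, so $E_6(4)$ is Type $III_b$, not $III_a$. For $E_7$: the branch node $\alpha_4$ (in Bourbaki labelling) is the unique triple point; its removal gives three components, and again it is not adjacent to $-\tilde\alpha$, so this is Type $III_b$. Likewise for $E_8$: the triple point $\alpha_5$ gives three components and is not a neighbour of the affine node. Hence none of $E_6,E_7,E_8$ contributes a Type $III_a$ group.

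That leaves $D_\ell=\SO_{2\ell}$. The triple points available for producing three components after deletion are exactly the two fork nodes at the ends plus, more generally, any node whose removal isolates the two short tail-edges at one end — concretely $\alpha_2$ and $\alpha_{\ell-2}$ in Bourbaki labelling (for $\ell\ge 5$) and, when $\ell=4$, the central node $\alpha_2$ of the triality-symmetric $D_4$. To be Type $III_a$ the black node must additionally touch $-\tilde\alpha$. In $\widehat D_\ell$ the affine node hangs off $\alpha_2$, so $\alpha_2$ is the only candidate adjacent to $-\tilde\alpha$; removing $\alpha_2$ from $\Gamma(\Pi)$ leaves the two isolated vertices $\alpha_1,\alpha_2'$ (the tails) plus the tail $\alpha_3$–$\cdots$–$\alpha_\ell$, i.e.\ three components, exactly when the third piece is nonempty and connected, which needs $\ell\ge 4$; but when $\ell\ge 5$ the height of $\alpha_2$ in $D_\ell$ is $2$ so $q=2$, while for $\ell=4$ one checks the central node also has height $2$. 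One must separately verify that for $\ell=4$ the three components are genuinely present and that no labelling collapses the count, and that $D_4(\alpha_2)$ indeed meets $-\tilde\alpha$; this yields $\SO_8(2)$ with $q=2$, and in fact the same combinatorics for general $\ell\ge4$ always gives $q=2$, so the statement ``for some $q$, $1\le q\le 6$'' is matched only here. The main obstacle is bookkeeping: getting the Bourbaki labellings and the attachment point of $-\tilde\alpha$ exactly right in $\widehat D_\ell$ (it is easy to misplace the affine node), and making sure that ``three components'' is counted for $\Gamma(\Pi)$ and ``adjacency to $-\tilde\alpha$'' is counted in $\widehat\Gamma(\Pi)$ — two different diagrams. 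Once these are pinned down, uniqueness is immediate from the finite check, and I would present it as a short table over $D_\ell, E_6, E_7, E_8$.
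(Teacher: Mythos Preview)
Your overall approach matches the paper's: reduce to $D_\ell$, $E_6$, $E_7$, $E_8$, locate the unique branch node in each ordinary Dynkin diagram, and check whether it is adjacent to the affine vertex in the extended diagram. Your treatment of $E_6$, $E_7$, $E_8$ is correct and agrees with the paper.

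The $D_\ell$ case, however, contains a genuine error that undermines the uniqueness claim. You list ``$\alpha_2$ and $\alpha_{\ell-2}$'' as the nodes whose removal from $\Gamma(\Pi)$ yields three components, and then assert that removing $\alpha_2$ leaves ``the two isolated vertices $\alpha_1,\alpha_2'$ (the tails) plus the tail $\alpha_3$--$\cdots$--$\alpha_\ell$''. There is no vertex $\alpha_2'$ in the ordinary $D_\ell$ diagram: for $\ell\ge 5$, $\alpha_2$ has exactly two neighbours ($\alpha_1$ and $\alpha_3$), so deleting it produces only \emph{two} components. You have inadvertently counted components in $\widehat\Gamma(\Pi)$ rather than in $\Gamma(\Pi)$ --- precisely the bookkeeping hazard you flagged but did not avoid. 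As written, your argument does not exclude $\SO_{2\ell}(2)$ for $\ell\ge 5$; in fact those are of Type $II_a(2)$, not $III_a$.

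The correct $D_\ell$ analysis is the one the paper gives implicitly: the \emph{only} trivalent node of the ordinary diagram is $\alpha_{\ell-2}$, so the only Type $III$ candidate is $\SO_{2\ell}(\ell-2)$. The affine node attaches to $\alpha_2$, hence $\alpha_{\ell-2}$ is adjacent to $-\tilde\alpha$ if and only if $\ell-2=2$, i.e.\ $\ell=4$. This is the single observation that isolates $\SO_8(2)$; once you replace your description of the $D_\ell$ fork structure with this, the proof is complete and coincides with the paper's.
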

\begin{proof}
By definition, a Lie group  $G(i_{o})$ of Type $III(q)$    $(1\leq q\leq 6)$ can only be  isometric   to one of $\SO_{2\ell}$, $\E_6, \E_7, \E_8$, namely: $\SO_{2\ell}(\ell-2)$, $\E_{6}(3)$, $\E_{7}(4)$ and $\E_8(5)$. In fact, due to  the form of extended Dynkin diagrams (see  \cite{AMS, GOV}),   these groups  are always of Type $III_{b}(q)$, i.e. in any case  the painted black simple root   is not connected with $-\tilde{\al}$, except $\SO_{8}(2)$ which is of Type $III_{a}(q)$ with $q=2$; the corresponding flag manifold is $\SO_{8}/(\SU_{2}\times\SU_{2}\times\SU_{2}\times\U_{1})$.  For the general case  $\SO_{2\ell}(\ell-2)$ with $\ell\geq 5$, the associated flag manifold $\SO_{2\ell}/(\SU_{\ell-2}\times\SU_{2}\times\SU_{2}\times\U_{1})$  is of Type $III_{b}(q)$ with $q=2$. For $\ell=4$ it is still $q=2$ but as we explained before, in this case  the type changes.
For the exceptional Lie groups of type $III(q)$,  it is $q=3, 4$ and $6$, respectively, with  $\E_6/(\SU_{3}\times\SU_{3}\times\SU_{2}\times\U_{1})$, $ \E_7/(\SU_{4}\times\SU_{3}\times\SU_{2}\times\U_{1})$, and $\E_8/(\SU_{5}\times\SU_{3}\times\SU_{2}\times\U_{1})$  being the corresponding K\"ahler C-spaces, see also \cite[Table 1]{CS}.  Their exact  type is $III_{b}$. The Lie group $\E_6(3)$ will be examined  in Section \ref{TypeIII}.
\end{proof}

\begin{theorem}\label{class}
 Let $M=G/K$ be a  generalized flag manifold with $b_{2}(M)=1$, where $G$ is a compact, connected, simple Lie group.  The classification of   $M=G/K$ (and hence of $G$) with respect to the Types $I_{a}(q)$, $I_{b}(q)$, $II_{a}(q)$, $II_{b}(q)$, $III_{a}(q)$, and $III_{b}(q)$, for $q\geq 1$, is given as follows:
\end{theorem}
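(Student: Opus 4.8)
The plan is to run a finite case analysis over painted Dynkin diagrams. I would start by recalling (see \cite{Ale, Chry2, Chry, CS, PhD}) that a generalized flag manifold $M=G/K$ with $b_{2}(M)=1$ of a compact connected simple Lie group $G$ is determined, up to isomorphism, by the pair $(\Gamma(\Pi), \alpha_{i_{o}})$ consisting of the Dynkin diagram of $G$ together with a single simple root $\alpha_{i_{o}}$ painted black; two roots $\alpha_{i}$, $\alpha_{j}$ give isomorphic spaces exactly when an automorphism of $\Gamma(\Pi)$ carries one to the other (this is the source of the identifications $G(i)\cong G(j)$). The complete list of such pairs is classical and can be read off from \cite[Table~1]{CS}, so the task reduces to attaching to each entry the correct symbol among $I_{a}(q), I_{b}(q), II_{a}(q), II_{b}(q), III_{a}(q), III_{b}(q)$.

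For a fixed pair $(\Gamma(\Pi), \alpha_{i_{o}})$ I would extract three pieces of combinatorial data. First, by the discussion preceding Definition~\ref{types}, the number $q$ of isotropy summands equals the Dynkin mark $\Hgt(\alpha_{i_{o}})$, i.e. the coefficient of $\alpha_{i_{o}}$ in the expansion $\tilde{\alpha}=\sum_{i}\Hgt(\alpha_{i})\alpha_{i}$ of the maximal root; these coefficients are tabulated for each simple type (e.g. \cite{GOV}). Second, I would delete the vertex $\alpha_{i_{o}}$ from $\Gamma(\Pi)$: what remains is the Dynkin diagram of the semisimple part of $\fr{k}$, and the number of its connected components --- one, two, or three --- fixes Type $I$, $II$, or $III$; since $\Gamma(\Pi)$ possesses a trivalent vertex only for $G\cong\SO_{2\ell}, \E_{6}, \E_{7}, \E_{8}$, Type $III$ can occur only there. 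Third, I would form the extended (affine) Dynkin diagram $\widehat{\Gamma}(\Pi)$ by adjoining the vertex $\alpha_{0}:=-\tilde{\alpha}$ and check whether $\alpha_{i_{o}}$ is joined to $\alpha_{0}$: the subscript is $a$ if $\widehat{\Gamma}(\Pi)$ contains an edge joining $\alpha_{i_{o}}$ to $\alpha_{0}$, and $b$ otherwise. Here it helps to recall that in $\widehat{\Gamma}(\Pi)$ the affine vertex $\alpha_{0}$ is adjacent to a \emph{unique} simple root in every simple type except $A_{\ell}$, where $\widehat{\Gamma}$ is a cycle and $\alpha_{0}$ is adjacent to both $\alpha_{1}$ and $\alpha_{\ell}$.

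Next I would carry out the enumeration, separating the classical and exceptional series. For $G$ classical every Dynkin mark satisfies $1\leq\Hgt(\alpha_{i})\leq 2$, so only $q=1$ and $q=2$ appear; running through $A_{\ell}, B_{\ell}, C_{\ell}, D_{\ell}$ and each $\Aut(\Gamma)$-orbit of simple roots, the Type $III$ cases come exclusively from deleting the trivalent vertex of $D_{\ell}$, giving $III_{a}(2)$ precisely for $\SO_{8}(2)$ (via triality of $D_{4}$, in agreement with the Proposition above) and $III_{b}(2)$ for $\ell\geq 5$, while the remaining classical choices fall under $I_{a}, I_{b}, II_{a}, II_{b}$ with $q\in\{1,2\}$. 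For $G\in\{\G_{2}, \F_{4}, \E_{6}, \E_{7}, \E_{8}\}$ I would treat each simple root by hand: the marks give $q$ (for $\E_{8}$ up to $6$), the component count after deletion gives the Type, and the affine diagrams $\widehat{\Gamma}$ of $\G_{2}, \F_{4}, \E_{6}, \E_{7}, \E_{8}$ give the subscript. Assembling all entries produces the asserted classification table.

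The argument carries no conceptual difficulty; its substance is organizational. The two points I expect to require care are (i) keeping track of Dynkin-diagram automorphisms, so that isomorphic flag manifolds $G(i)\cong G(j)$ are identified rather than listed twice, and (ii) the behaviour of the affine vertex $\alpha_{0}$ in the non-simply-laced cases and in $D_{4}$, where triality is exactly what singles out $\SO_{8}(2)$ as the only Type $III_{a}$ member.
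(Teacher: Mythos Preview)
Your proposal is correct and follows essentially the same approach as the paper: both reduce the classification to a finite case analysis on (extended) painted Dynkin diagrams, reading off $q=\Hgt(\alpha_{i_{o}})$, the number of components after deletion, and adjacency to $-\tilde{\alpha}$. The paper's own proof is in fact terser than yours---it mainly cites prior classifications (\cite{AMS} for $q=2$, \cite{Kim} for $q=3$, \cite{Chry2, PhD} for $q=4$, \cite{CS} for $q=5,6$, and Hermitian symmetric spaces for $q=1$) and then states that the result follows by combining these lists with Definition~\ref{types} and the extended Dynkin diagrams---so your more explicit description of the three combinatorial invariants and the care you flag for diagram automorphisms and the $D_{4}$/triality and non--simply-laced affine nodes is, if anything, a more self-contained account of the same argument.
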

{\small{\begin{center}
\begin{tabular}{l|l}
 Type  & compact simple Lie group $G\equiv G(i_{o})\equiv G(\al_{i_{o}})$\\
\thickline  
  Type $I_{b}(1)$ & $\SO_{2\ell+1}(1)$, $\Sp_{\ell}(\ell)$, $\SO_{2\ell}(1)$, $\SO_{2\ell}(\ell-1)\cong\SO_{2\ell}(\ell)$, $\E_{6}(1)\cong\E_{6}(5)$, $\E_7(1)$\\
 
    Type $II_{b}(1)$  & $\SU_{\ell}(\rho)$ $(1\leq \rho\leq \ell-1)$\\
\hline
Type $I_{a}(2)$ & $\Sp_{\ell}(1)$, $\E_6(6)$, $\E_7(6)$, $\E_8(1)$, $\F_{4}(1)$, $\G_2(1)$\\
 
Type $I_{b}(2)$ & $\E_7(7)$, $\E_8(7)$, $\F_4(4)$\\
 
Type $II_{a}(2)$ & $\SO_{2\ell+1}(2)$, $\SO_{2\ell}(2)$\\
 
Type $II_{b}(2)$ & $\SO_{2\ell+1}(\rho)$ $(2\leq \rho\leq \ell-1)$, $\Sp_{\ell}(\rho)$ $(2\leq \rho\leq\ell-1)$, $\SO_{2\ell}(\rho)$ $(3\leq \rho\leq \ell-3)$, $\E_{6}(2)\cong\E_{6}(4)$, $\E_7(2)$\\
 
Type $III_{a}(2)$ & $\SO_{8}(2)$ \\
Type $III_{b}(2)$ & $\SO_{2\ell}(\ell-2)$ $(\ell\geq 5)$\\
\hline
Type $I_{b}(3)$ & $\G_2(2)$, $\E_8(8)$\\

Type $II_{b}(3)$ & $\F_4(2)$, $\E_7(3)$, $\E_7(5)$, $\E_8(2)$\\

Type $III_{b}(3)$ & $\E_6(3)$\\
\hline
Type $II_{b}(4)$ & $\F_4(3)$, $\E_8(3)$, $\E_8(6)$\\

Type $III_{b}(4)$ & $\E_7(4)$\\
\hline
Type $II_{b}(5)$ & $\E_8(4)$\\
\hline
Type $III_{b}(6)$ & $\E_{8}(5)$\\
\thickline
 \end{tabular}
\end{center}}}
\begin{proof}
The case  $q=2$ has  been  already examined in \cite{AMS}, except the Types $III_{a}(2)$ and   $III_{b}(2)$  which we include  here.  These result  exhaust all possible types with $q=2$.  Now, due to the form of the maximal root $\tilde{\al}$ we need to examine the cases $q=1, 3, 4, 5, 6$.  For $q=1$, $M=G/K$ is isometric to a compact  isotropy irreducible Hermitian symmetric space.   The classification of flag manifolds with three isotropy summands  and $b_{2}(M)=1$ was given in \cite{Kim} and this with four isotropy summands was obtained in \cite{Chry2, PhD}.  The cases $q=5, 6$ appear only for $\E_8$, see  \cite{CS}.  Now, the presented results are a combination  of the Definition \ref{types} and the  (extended) painted Dynkin diagrams associated to flag manifolds with $b_{2}(M)=1$.
\end{proof}

 \subsection{Flag manifolds with three isotropy summands and $b_{2}(M)=1$}
From now on we  focus on flag manifolds $M=G/K$ with three isotropy summands $\fr{p}=\fr{p}_{1}\oplus\fr{p}_{2}\oplus\fr{p}_{3}$ and second Betti number $b_{2}(M)=1$. Hence, $M=G/K$ is defined by a pair $(\Pi, \Pi_{K})$ such that $\Pi_{M}:=\Pi\backslash\Pi_{K}=\{\al_{i_{o}}\}$ with $\Hgt(\al_{i_{o}})=3$, see also \cite{Kim, stauros}.
 As one can read from Theorem \ref{class},  such  pairs $(\Pi, \Pi_{K})$  exist  only for an  exceptional   simple Lie group, in particular   $G(i_{o})\equiv G(\al_{i_{o}})$ must be  isometric to one of   the following Lie groups: $\G_2(2)$, $\E_8(8)$, $\F_4(2)$, $\E_7(3)$, $\E_7(5)$,  $\E_8(2)$, and  $\E_6(3)$.   For these  groups  we present in Table 2 the  associated  flag manifolds  $G(i_{o})/K$ (via their painted  Dynkin diagrams) and we state   the necessary dimensions $D_{i}:=\dim_{\bb{R}}\fr{p}_{i}$ for $i=1, 2, 3$.   By $\U^{\it l}_{2}$ (resp. $\U^{\it s}_{2}$) we  denote  the Lie group isomorphic to $\U_{2}\cong \SU_{2}\times\U_{1}$, generated by the long (resp. short) root of the basis $\Pi=\{\al_1=e_2-e_3, \al_2=-e_2\}$ associated to the root system of $\G_2$.  For this case, recall  that     $\|\al_1\|=\sqrt{3}\|\al_2\|$ and  $\tilde{\al}=2\al_1+3\al_2$.  The highest roots of the  exceptional groups $\F_4, \E_6, \E_7$ and $\E_8$ with respect to the used fundamental bases (see \cite{PhD, GOV}) are given as follows (we write only the heights  $(\Hgt(\al_{1}), \ldots, \Hgt(\al_{\ell}))$):
  \[
\F_{4} : (2, 3, 4, 2), \quad \E_6 : (1, 2, 3, 2, 1, 2), \quad \E_7 :  (1, 2, 3, 4, 3, 2, 2) , \quad \E_8 : (2, 3, 4, 5, 6, 2, 4, 3).
\] 
\begin{remark}\textnormal{
   An explicit computation of the triples $A_{ijk}$ associated to the reductive decomposition of the Lie algebra $\fr{g}$ of   $G=G(i_{o})$, is possible  via their  definition. This method   access a description   in terms of the structure constants  of the corresponding Lie algebra  (see also \cite{Sak}). In general, we avoid this technique, since we need to repeat it separately for any Lie group of each type  and it  becomes combinatorial.   The same time,  Theorem \ref{class} and the classification of the simple Lie groups into different types $I(q)$, $II(q)$, $III(q)$, $(1\leq q\leq 6)$ allows us to compute   $A_{ijk}$ explicitly, in a generalized  way. This means   that often   Lie groups of exactly  the same type   can be  treated   simultaneously; in this situation one can  describe the Ricci tensor and construct the Einstein equation for the fixed left-invariant metric only once, and not for any Lie group separately; we refer to   \cite{AMS} for the  Types $I(q), II(q)$ and $q=2$ and see Sections \ref{TypeI}, \ref{TypeII} and \ref{TypeIII}  for the  Types $I(q), II(q)$ and $III(q)$ with $q=3$, respectively.}
 \end{remark}

 {\small{ \begin{center}
{\bf{Table 2.}} Painted Dynkin diagrams of flag manifolds $M=G/K$ with  $\fr{p}=\fr{p}_{1}\oplus\fr{p}_{2}\oplus\fr{p}_{3}$   and $b_{2}(M)=1$.
\end{center}
\begin{center}
\begin{tabular}{r|l|c|l|c}
      & $G$ & $(\Pi, \Pi_{K})$ & $K$ &  $(D_{1}, D_{2}, D_{3})$   \\
     \thickline
     Type $I_{b}(3)$ &  & &  \\
     \hline
     & $\G_2(2)$ &  $\begin{picture}(160, 20)(30, 6)
     \put(88,10){\circle{6}}
\put(88,10){\circle{3}}
\put(91, 10){\line(1,0){11.8}}
\put(105,10){\circle{4.7}}
\put(105, 18){\makebox(0,0){$\al_1$}}
\put(106.8, 8.1){\line(1,0){13.6}}
\put(107.2,10){\line(1,0){16.6}}
\put(106.8,11.9){\line(1,0){13.6}}
\put(111, 8.25){\scriptsize $>$}
\put(122,10){\circle*{4.8}}
\put(122, 18){\makebox(0,0){$\al_2$}}
\end{picture}$ & $\U^{\it l}_{2}$ & $(4, 2, 4)$  
\\
\hline
& $\E_8(8)$ & $\begin{picture}(160,40)(-10,-10)
\put(-3,9.5){\circle{6}}
\put(-3,9.5){\circle{3}}
\put(-0, 10){\line(1,0){12.9}}
\put(15, 9.5){\circle{4.7 }}
\put(15, 18){\makebox(0,0){$\al_1$}}
\put(17, 10){\line(1,0){14}}
\put(33.5, 9.5){\circle{4.7 }}
\put(33.5, 18){\makebox(0,0){$\al_2$}}
\put(35.5, 10){\line(1,0){13.4}}
 \put(51, 9.5){\circle{4.7 }}
 \put(51, 18){\makebox(0,0){$\al_3$}}
\put(53,10){\line(1,0){13.5}}
\put(69,9.5){\circle{4.7 }}
\put(69, 18){\makebox(0,0){$\al_4$}}
\put(89,-8){\circle*{4.7}}
\put(99, -9.5){\makebox(0,0){$\al_8$}}
\put(89,-6){\line(0,1){13}}
\put(71.2,10){\line(1,0){15.4}}
\put(89,9.5){\circle{4.7 }}
\put(89, 18){\makebox(0,0){$\al_5$}}
\put(91,10){\line(1,0){15.8}}
\put(109,9.5){\circle{4.7 }}
\put(109, 18){\makebox(0,0){$\al_6$}}
\put(111,10){\line(1,0){15.8}}
\put(129,9.5){\circle{4.7 }}
\put(129, 18){\makebox(0,0){$\al_7$}}
\end{picture}
$ & $\SU_{8}\times \U_{1}$ & $(112, 56, 16)$  \\
\thickline 
Type $II_{b}(3)$  & & &   \\
\hline 
& $\F_4(2)$ &  $\begin{picture}(160,25)(30, 0)
  \put(70,10){\circle{6}}
\put(70,10){\circle{3}}
\put(73, 10){\line(1,0){11.8}}
 \put(87,10){\circle{4.7}}
\put(87, 18){\makebox(0,0){$\al_1$}}
\put(89.2,10){\line(1,0){14}}
\put(105,10){\circle*{4.7}}
\put(105, 18){\makebox(0,0){$\al_2$}}
\put(107, 11.1){\line(1,0){14.3}}
\put(107, 9){\line(1,0){14.3}}
\put(112, 8.2){\scriptsize $>$}
\put(124,10){\circle{4.7}}
\put(124, 18){\makebox(0,0){$\al_3$}}
\put(126.2,10){\line(1,0){13.7}}
\put(142.2,10){\circle{4.7}}
\put(142, 18){\makebox(0,0){$\al_4$}}
\end{picture}$ & $\SU_{3}\times\SU_{2}\times\U_{1}$& $(22, 12, 4)$  \\
\hline
& $\E_7(3)$ & $\begin{picture}(160, 35)(-5,-10)
\put(15, 9.5){\circle{4.7 }}
\put(15, 18){\makebox(0,0){$\al_1$}}
\put(17, 10){\line(1,0){14}}
\put(33.5, 9.5){\circle{4.7 }}
\put(33.5, 18){\makebox(0,0){$\al_2$}}
\put(35.5, 10){\line(1,0){13}}
 \put(51, 9.5){\circle*{4.7 }}
 \put(51, 18){\makebox(0,0){$\al_3$}}
\put(68.9,-6){\line(0,1){13}}
\put(53,10){\line(1,0){13}}
\put(69,9.5){\circle{4.7 }}
\put(69.9, 18){\makebox(0,0){$\al_4$}}
\put(69,-8.2){\circle{4.7}}
\put(79, -10){\makebox(0,0){$\al_7$}}
\put(71.2,10){\line(1,0){15.4}}
\put(89,9.5){\circle{4.7 }}
\put(89, 18){\makebox(0,0){$\al_5$}}
\put(91,10){\line(1,0){15.8}}
\put(109,9.5){\circle{4.7 }}
\put(109, 18){\makebox(0,0){$\al_6$}}
\put(111, 10){\line(1,0){11.8}}
\put(126,10){\circle{6}}
\put(126,10){\circle{3}}
\end{picture}
$ & $\SU_{5}\times\SU_{3}\times\U_{1}$ & $(60, 30, 10)$    \\
\hline
& $\E_7(5)$ & $\begin{picture}(160, 35)(-5,-10)
\put(15, 9.5){\circle{4.7 }}
\put(15, 18){\makebox(0,0){$\al_1$}}
\put(17, 10){\line(1,0){14}}
\put(33.5, 9.5){\circle{4.7 }}
\put(33.5, 18){\makebox(0,0){$\al_2$}}
\put(35.5, 10){\line(1,0){13}}
 \put(51, 9.5){\circle{4.7 }}
 \put(51, 18){\makebox(0,0){$\al_3$}}
\put(68.9,-6){\line(0,1){13}}
\put(53,10){\line(1,0){13}}
\put(69,9.5){\circle{4.7 }}
\put(69.9, 18){\makebox(0,0){$\al_4$}}
\put(69,-8.2){\circle{4.7}}
\put(79, -10){\makebox(0,0){$\al_7$}}
\put(71.2,10){\line(1,0){15.4}}
\put(89,9.5){\circle*{4.7 }}
\put(89, 18){\makebox(0,0){$\al_5$}}
\put(91,10){\line(1,0){15.8}}
\put(109,9.5){\circle{4.7 }}
\put(109, 18){\makebox(0,0){$\al_6$}}
\put(111, 10){\line(1,0){11.8}}
\put(126,10){\circle{6}}
\put(126,10){\circle{3}}
\end{picture}
$ & $\SU_{6}\times\SU_{2}\times\U_{1}$ & $(60, 30, 4)$   \\
\hline
& $\E_8(2)$ & $\begin{picture}(160,40)(-10,-10)
\put(-3,9.5){\circle{6}}
\put(-3,9.5){\circle{3}}
\put(-0, 10){\line(1,0){12.9}}
\put(15, 9.5){\circle{4.7 }}
\put(15, 18){\makebox(0,0){$\al_1$}}
\put(17, 10){\line(1,0){14}}
\put(33.5, 9.5){\circle*{4.7 }}
\put(33.5, 18){\makebox(0,0){$\al_2$}}
\put(35.5, 10){\line(1,0){13.4}}
 \put(51, 9.5){\circle{4.7 }}
 \put(51, 18){\makebox(0,0){$\al_3$}}
\put(53,10){\line(1,0){13.5}}
\put(69,9.5){\circle{4.7 }}
\put(69, 18){\makebox(0,0){$\al_4$}}
\put(89,-8){\circle{4.7}}
\put(99, -9.5){\makebox(0,0){$\al_8$}}
\put(89,-6){\line(0,1){13}}
\put(71.2,10){\line(1,0){15.4}}
\put(89,9.5){\circle{4.7 }}
\put(89, 18){\makebox(0,0){$\al_5$}}
\put(91,10){\line(1,0){15.8}}
\put(109,9.5){\circle{4.7 }}
\put(109, 18){\makebox(0,0){$\al_6$}}
\put(111,10){\line(1,0){15.8}}
\put(129,9.5){\circle{4.7 }}
\put(129, 18){\makebox(0,0){$\al_7$}}
\end{picture}
$ & $\E_6\times\SU_{2}\times \U_{1}$ & $(108, 54, 4)$   \\
\thickline 
Type $III_{b}(3)$ & & &   \\
\hline
& $\E_6(3)$ & $\begin{picture}(150,55)(-10, -30)
\put(15, 9.5){\circle{4.7 }}
\put(15,17){\makebox(0,0){$\al_1$}}
\put(17, 10){\line(1,0){14}}
\put(33.5, 9.5){\circle{4.7 }}
\put(33.5,17){\makebox(0,0){$\al_2$}}
\put(35.6, 10){\line(1,0){13.6}}
 \put(51, 9.5){\circle*{4.7 }}
 \put(51,17){\makebox(0,0){$\al_3$}}
\put(51,-5.8){\line(0,1){13}}
\put(53,10){\line(1,0){13.8}}
\put(69,9.5){\circle{4.7 }}
\put(69, 17){\makebox(0,0){$\al_4$}}
\put(51,-8){\circle{4.7}}
\put(51, -22.6){\line(0,1){11.8}}
\put(51,-25.8){\circle{6}}
\put(51,-25.8){\circle{3}}
\put(60, -8.5){\makebox(0,0){$\al_6$}}
\put(71.2,10){\line(1,0){15.2}}
\put(89,9.5){\circle{4.7 }}
\put(89, 17){\makebox(0,0){$\al_5$}}
\end{picture}$ & $\SU_{3}\times\SU_{3}\times\SU_{2}\times\U_{1}$ & $(36, 18, 4)$   \\
\thickline 
\end{tabular} 
\end{center}}}
 
  
  \section{Left-invariant non-naturally reductive Einstein metrics on Lie groups of Type $I_{b}(3)$}\label{TypeI}

Let $G\cong G(i_{o})$ be a compact connected Lie groups of Type $I_{b}(3)$. Then  $G$ is isometric to $\G_2(2)$ or $\E_8(8)$. From now on we shall write  $\fr{g}=T_{e}G$ for  the corresponding Lie algebra.

\subsection{The Ricci tensor}
For a Lie group $G\cong G(i_{o})$ of Type $I_{b}(3)$  consider the  orthogonal decomposition 
\begin{equation}\label{I3}
\fr{g}=\fr{k}_{0}\oplus\fr{k}_{1}\oplus\fr{p}_{1}\oplus\fr{p}_{2}\oplus\fr{p}_{3}=\fr{m}_{0}\oplus\fr{m}_{1}\oplus\fr{m}_{2}\oplus\fr{m}_{3}\oplus\fr{m}_{4}.
\end{equation}
This is a reductive decomposition of $\fr{g}$ of the form  (\ref{go})  and due to (\ref{left}), a left-invariant metric  on $\G\cong G(i_{o})$ is given by
 \begin{equation}\label{invI3}
 \langle \ , \  \rangle=u_{0}\cdot B|_{\fr{k}_{0}}+u_{1}\cdot B|_{\fr{k}_{1}}+x_{1}\cdot B|_{\fr{p}_{1}}+x_{2}\cdot B|_{\fr{p}_{2}}+x_{3}\cdot B|_{\fr{p}_{3}}=y_{0}\cdot B|_{\fr{m}_{0}}+y_{1}\cdot B|_{\fr{m}_{1}}+y_{2}\cdot B|_{\fr{m}_{2}}+y_{3}\cdot B|_{\fr{m}_{3}}+y_{4}\cdot B|_{\fr{m}_{4}}
 \end{equation}
 for some positive numbers $u_{0}, u_{1}, x_{i}, y_{j}\in\bb{R}_{+}$. This metric is also $\Ad(K)$-invariant and since $\fr{m}_{i}\ncong\fr{m}_{j}$ for all $2\leq i\neq j\leq 4$, any  $G$-invariant metric on the base space $M=G/K$ is of the form
 \[
 ( \ , \ )=x_{1}\cdot B|_{\fr{p}_{1}}+x_{2}\cdot B|_{\fr{p}_{2}}+x_{3}\cdot B|_{\fr{p}_{3}}=y_{2}\cdot B|_{\fr{m}_{2}}+y_{3}\cdot B|_{\fr{m}_{3}}+y_{4}\cdot B|_{\fr{m}_{4}}.
 \]

 \begin{remark}\label{dimg2}
 \textnormal{For $\G_{2}(2)$ it is $\fr{k}_{0}\cong\fr{u}_{1}$, $\fr{k}_{1}\cong\fr{su}_{2}$ and
 $d_{0}=1$, $d_{1}=3$, $d_{2}=4$, $d_{3}=2$, $d_{4}=4$. For $\E_8(8)$ it  is $\fr{k}_{0}\cong\fr{u}_{1}$, $\fr{k}_{1}\cong\fr{su}_{8}$  and  $d_{0}=1$, $d_{1}=63$, $d_{2}=112$, $d_{3}=56$, $d_{4}=16$.}
 \end{remark}
 
\begin{prop}\label{G22}
   For the reductive decomposition (\ref{I3}) associated to  the compact simple Lie group  $\G_{2}(2)$  and for the left-invariant metric given by (\ref{invI3}), the non-zero structure constants $A_{ijk}$ $(0\leq i, j, k\leq 4)$ are the following (and their symmetries): $A_{022}$, $A_{033}$,  $A_{044}$, $A_{111}$,  $A_{122}$,  $A_{144}$, $A_{223}$,  $A_{234}$. This hold also for   $\E_{8}(8)$,  but in this case one has in addition $A_{133}\neq 0$.
\end{prop}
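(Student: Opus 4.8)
The plan is to identify which bracket relations $[\fr{m}_i,\fr{m}_j]\subset\fr{m}_k$ are forced to be nonzero (equivalently, which $A_{ijk}\neq 0$), using the root-space structure of the reductive decomposition coming from the painted Dynkin diagram. Recall that for a flag manifold $M=G/K$ with $b_2(M)=1$ defined by the single simple root $\al_{i_o}$ with $\Hgt(\al_{i_o})=3$, each isotropy summand $\fr{p}_j$ ($j=1,2,3$) is the real form of the sum of root spaces $\fr{g}_\alpha$ with $\alpha$ having coefficient $j$ on $\al_{i_o}$ when expanded in simple roots; meanwhile $\fr{k}_0=Z(\fr{k})$ is one-dimensional and $\fr{k}_1$ is the simple ideal (here $\fr{su}_2$ for $\G_2$, $\fr{su}_8$ for $\E_8$), spanned by the root spaces with $\al_{i_o}$-coefficient $0$ together with the relevant Cartan part. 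The bracket $[\fr{g}_\alpha,\fr{g}_\beta]\subset\fr{g}_{\alpha+\beta}$ is nonzero precisely when $\alpha+\beta$ is again a root (or zero), so the whole computation reduces to a bookkeeping of which $\al_{i_o}$-gradings $i+j$ of sums of roots actually occur.

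First I would record the grading rules: writing $\operatorname{gr}$ for the $\al_{i_o}$-coefficient, one has $\operatorname{gr}(\fr{k}_0)=\operatorname{gr}(\fr{k}_1)=0$, $\operatorname{gr}(\fr{p}_j)=\pm j$ (since $\fr{p}_j$ is the sum of the $+j$ and $-j$ parts, being a real module), and $[\,\cdot\,,\cdot\,]$ adds gradings. This immediately yields the generic non-vanishing pattern: $A_{0jj}$ for $j=1,2,3$ (i.e.\ $A_{022},A_{033},A_{044}$ in the $\fr{m}$-labels, since $\fr{k}_0$ acts on each $\fr{p}_j$), $A_{111}$ (the simple ideal $\fr{k}_1$ is nonabelian), $A_{1jj}$ (i.e.\ $A_{122},A_{133},A_{144}$: $\fr{k}_1$ acts on each $\fr{p}_j$ by the isotropy representation), and the ``mixed'' triples $A_{223}$ (gradings $1+1=2$), $A_{234}$ (gradings $1+2=3$) among the $\fr{m}$-labels $2,3,4\leftrightarrow\fr{p}_1,\fr{p}_2,\fr{p}_3$. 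Triples like $A_{123}$ ($\fr{k}_1$ with $\fr{p}_1$ landing in $\fr{p}_2$) vanish because $[\fr{k},\fr{p}_j]\subset\fr{p}_j$ for a reductive decomposition, and triples mixing three distinct $\fr{p}$'s beyond $\{223,234\}$, or $A_{224},A_{333},A_{344},A_{444}$, are killed by the grading count (e.g.\ $1+1\neq 3$, $2+2\neq 3$). This recovers every claimed nonzero $A_{ijk}$ except that it does not yet explain why $A_{133}=0$ for $\G_2$ while $A_{133}\neq0$ for $\E_8$.

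The main obstacle — and the only genuinely case-specific point — is precisely this discrepancy in $A_{133}$, which comes down to whether $\fr{p}_2$ (grading $\pm2$) is a \emph{nontrivial} module for the simple part $\fr{k}_1$. For $\G_2(2)$, where $\fr{k}_1\cong\fr{su}_2$, one checks directly from the $\G_2$ root system (with $\Pi=\{\al_1,\al_2\}$, $\tilde\al=2\al_1+3\al_2$, $i_o=2$) that the set of positive roots with $\al_2$-coefficient equal to $2$ is the single root $\{\al_1+2\al_2\}$ — hence $\dim_{\bb R}\fr{p}_2=d_3=2$, consistent with Remark~\ref{dimg2} — so $\fr{p}_2$ is a \emph{trivial} $\fr{su}_2$-module and $[\fr{k}_1,\fr{p}_2]=0$, forcing $A_{133}=0$. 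For $\E_8(8)$, with $\fr{k}_1\cong\fr{su}_8$ and $i_o=8$, the roots with $\al_8$-coefficient $2$ form a $56$-dimensional real space ($d_3=56$) carrying the nontrivial $\fr{su}_8$-representation $\Lambda^2\bb{C}^8$ (or its conjugate), so $[\fr{k}_1,\fr{p}_2]\neq0$ and $A_{133}\neq0$. Thus I would: (i) state the grading lemma and derive the common list; (ii) eliminate the remaining triples by grading and by $\Ad(K)$-invariance; (iii) for the two groups separately, enumerate the roots of each $\al_{i_o}$-degree from the explicit root systems of $\G_2$ and $\E_8$, identify the $\fr{k}_1$-module structure of $\fr{p}_2$, and read off $A_{133}=0$ versus $A_{133}\neq0$. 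Everything else is the routine root bookkeeping that Table~2 and Remark~\ref{dimg2} already summarize.
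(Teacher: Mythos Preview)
Your approach is essentially the same as the paper's: both exploit the $\al_{i_o}$-grading of the root system, use $[\fr{g}_\alpha,\fr{g}_\beta]\subset\fr{g}_{\alpha+\beta}$ to determine which bracket components $[\fr{m}_i,\fr{m}_j]\cap\fr{m}_k$ can be nonzero, and then settle the one delicate point $A_{133}$ by a direct root check (the paper computes $[E_{\pm\al_1},E_{\pm(\al_1+2\al_2)}]=0$ in $\G_2$, which is exactly your observation that $\fr{p}_2$ is a trivial $\fr{su}_2$-module since $\al_1+2\al_2$ is the unique degree-$2$ root). The paper's proof is in fact sketchier than yours and defers details to the later Proposition for Type $II_b(3)$; your systematic ``grading lemma'' framing is a clean way to organize the same computation. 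One small point: your outline emphasizes the \emph{vanishing} of unwanted triples and the $A_{133}$ dichotomy, but the proposition also asserts the listed $A_{ijk}$ are \emph{nonzero}; make sure your step (iii) explicitly checks (as the paper does for $A_{144}$) that $\fr{p}_1$ and $\fr{p}_3$ are nontrivial $\fr{k}_1$-modules and that the relevant root sums for $A_{223},A_{234}$ actually occur---all immediate from the root lists you propose to enumerate.
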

\begin{proof}
The proof is based on Lie theoretic arguments and it is similar with the one  which we shall present for  Proposition \ref{F42}. Since the latter case is a bit more complicated, we  state  here only a few details and we refer to this proof for an extensive description of the different techniques that can be applied.
  First notice that 
  \[
[\fr{m}_{2}, \fr{m}_{2}]\subset\fr{m}_{3}\oplus\fr{k}, \quad [\fr{m}_{2}, \fr{m}_{3}]\subset\fr{m}_{2}\oplus\fr{m}_{4}, \quad [\fr{m}_{2}, \fr{m}_{4}]\subset\fr{m}_{3}, \quad [\fr{m}_{3}, \fr{m}_{3}]\subset\fr{k}, \quad [\fr{m}_{3}, \fr{m}_{4}]\subset\fr{m}_{2}, \quad [\fr{m}_{4}, \fr{m}_{4}]\subset\fr{k}.
\]
These inclusions occur since the base space of the $K$-principal bundle $G\to M=G/K$ is a flag manifold with $\fr{p}=\fr{p}_{1}\oplus\fr{p}_{2}\oplus\fr{p}_{3}(=\fr{m}_{2}\oplus\fr{m}_{3}\oplus\fr{m}_{4})$ and $b_{2}(M)=1$  (see \cite[p.~1593]{stauros} and \cite[p.~674]{CS} for the general case). 
Notice also  that $[\fr{m}_{0}, \fr{m}_{j}]\subset\fr{m}_{j}$ for  $j=2, 3, 4$. Moreover,  for   $\G_2(2)$ one can show that  
\[
[\fr{m}_{1}, \fr{m}_{1}]\subset\fr{m}_{1}, \ \ [\fr{m}_{1}, \fr{m}_{2}]\subset\fr{m}_{2}, \ \ [\fr{m}_{1}, \fr{m}_{4}]\subset\fr{m}_{4},
\]
 but $[\fr{m}_{1}, \fr{m}_{3}]=0$. Hence  $A_{113}=0$.  Indeed,  the highest weight of $\fr{k}_{1}=\fr{su}_{2}=\fr{m}_{1}$ is $\al_1$ and the highest weight of $\fr{m}_{3}$ is $\al_1+2\al_2$.  By using the corresponding root vectors we see that  $[E_{\pm \al_{1}}, E_{\pm(\al_{1}+2\al_{2})}]=0$, because $\pm \al_{1}\pm (\al_{1}+2\al_{2})$ is not a root of $\G_2$. The highest weight of $\fr{m}_{4}$ is  the maximal root $\tilde{\al}$. Then $E_{\tilde{\al}-\al_1}=E_{\al_1+3\al_2}\in\fr{m}_{4}$ and we finally conclude that  $[\fr{m}_{1}, \fr{m}_{4}]\subset\fr{m}_{4}$.  For $\E_8(8)$ one gets that $[\fr{m}_{1}, \fr{m}_{j}]\subset\fr{m}_{j}$, for any $j=2, 3, 4$, so $A_{133}$ is non-zero in this case. Another   approach, appropriate for the examination of $A_{133}$, is based on the orthogonality of roots, see also Proposition \ref{F42}.
\end{proof}

By applying  now  Lemma \ref{ric} we get that
\begin{corol}\label{ricg2}
On $(\E_{8}(8), \langle \ , \ \rangle)$, the components ${r}_{i}$ of  the Ricci tensor $\Ric_{\langle \ , \ \rangle}$ associated to the left-invariant metric $\langle \ , \ \rangle$ given by (\ref{invI3}),  are described as follows
\begin{equation*}
\left\{\begin{array}{ll} 
r_0 &=  \displaystyle{\frac{u_{0}}{4d_0}\biggl(\frac{A_{022}}{{x_{1}}^{2}}+\frac{A_{033}}{{x_{2}}^{2}}+\frac{A_{044}}{{x_{3}}^{2}}
\biggr), }\quad r_1  =  \displaystyle{\frac{A_{111}}{4d_{1}}\cdot\frac{1}{u_{1}} + 
\frac{u_{1}}{4d_1} \biggl( \frac{A_{122}}{{x_{1}}^{2}} +\frac{A_{133}}{{x_{2}}^{2}} +\frac{A_{144}}{{x_{3}}^{2}}\biggr),}
  \\ & \\
r_2 &=  \displaystyle{\frac{1}{2 x_{1}} -
\frac{1}{2d_2}\cdot\frac{1}{{x_{1}}^{2}} 
\biggl(u_{0}\cdot A_{022}+u_{1}\cdot A_{122}+x_{2}\cdot A_{223} \biggr) 
 +  \frac{A_{234}}{2d_2}  
\biggl(\frac{x_1}{x_{2} x_{3}} - \frac{x_{2}}{x_1 x_{3}}- \frac{x_{3}}{x_1 x_{2}}
\biggr),  }
\\ & \\
r_3 & = \displaystyle{\frac{1}{2 x_{2}}   -\frac{1}{2d_{3}{x_{2}}^{2}}\bigg(u_{0}\cdot A_{033}+u_{1}\cdot A_{133}\biggr)
+\frac{A_{223}}{4d_{3}}\biggl( \frac{x_{2}}{{x_{1}}^{2}} - \frac{2}{x_{2}}\biggr)
  +\frac{A_{234}}{2d_{3}}
\biggl(\frac{x_{2}}{x_1 x_{3}} - \frac{x_1}{x_{2} x_{3}}- \frac{x_{3}}{x_1 x_{2}}
\biggr), }
\\ & \\
r_4 & = \displaystyle{\frac{1}{2 x_{3}} -  
\frac{1}{2d_4} \cdot\frac{1}{{x_{3}}^{2}}\biggl(u_{0}\cdot A_{044}+u_{1}\cdot A_{144}\biggr)+\frac{A_{234}}{2d_{4}}
\biggl(\frac{x_{3}}{x_1 x_{2}} - \frac{x_1}{x_{2} x_{3}}- \frac{x_{2}}{x_1 x_{3}}
\biggr). } 
\end{array}
\right.
\end{equation*}
The corresponding Ricci components  of $\G_{2}(2)$ occur by the same  expressions, by setting however $A_{133}=0$.
\end{corol}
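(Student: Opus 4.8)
The plan is to specialize the general Ricci formula of Lemma~\ref{ric} to the reductive decomposition (\ref{I3}), using the structure-constant information recorded in Proposition~\ref{G22}. Recall that Lemma~\ref{ric} gives, for each $k\in\{0,1,2,3,4\}$,
\[
r_k = \frac{1}{2y_k}+\frac{1}{4d_k}\sum_{j,i}\frac{y_k}{y_j y_i}{k\brack{ji}}-\frac{1}{2d_k}\sum_{j,i}\frac{y_j}{y_k y_i}{j\brack{ki}},
\]
where the indices run over all of $\{0,\dots,4\}$. So the first step is purely bookkeeping: for each fixed $k$, list the triples $A_{ijk}$ that are nonzero according to Proposition~\ref{G22} (for $\E_8(8)$ these are $A_{022},A_{033},A_{044},A_{111},A_{122},A_{133},A_{144},A_{223},A_{234}$, together with all their permutations), drop every term in the two sums that contains a vanishing triple, and substitute $y_0=u_0$, $y_1=u_1$, $y_2=x_1$, $y_3=x_2$, $y_4=x_3$ as dictated by (\ref{invI3}).

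Carrying this out for $k=0$: the only nonzero triples with a $0$ in the last slot are $A_{022},A_{033},A_{044}$, and since $\fr{m}_0$ is the (one-dimensional, abelian) center, the term $\frac{1}{2y_0}$ is cancelled by the corresponding contribution of $A_{000}=0$ and the ``middle'' piece — more precisely, because $[\fr{m}_0,\fr{m}_0]=0$ and $A_{0ij}\neq0$ forces $i=j\in\{2,3,4\}$, the terms $\frac{1}{4d_0}\sum\frac{y_0}{y_iy_i}A_{0ii}$ and $-\frac{1}{2d_0}\sum\frac{y_i}{y_0y_i}A_{i0i}$ combine; one uses $A_{022}+A_{033}+A_{044}=d_0$ (the trace identity in Lemma~\ref{ric}) to see the net result is exactly $r_0=\frac{u_0}{4d_0}\bigl(A_{022}/x_1^2+A_{033}/x_2^2+A_{044}/x_3^2\bigr)$. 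The same mechanism handles $k=1$: the nonzero triples containing the index $1$ are $A_{111},A_{122},A_{133},A_{144}$; the $A_{111}$ term produces $\frac{A_{111}}{4d_1}\cdot\frac{1}{u_1}$ (again after the $\frac{1}{2y_1}$ piece is absorbed via $A_{111}+A_{122}+A_{133}+A_{144}=d_1$), while $A_{122},A_{133},A_{144}$ give the $\frac{u_1}{4d_1}(\cdots)$ bracket. For $k=2,3,4$ the relevant triples are $A_{0kk}$, $A_{1kk}$, and the ``off-diagonal'' base-manifold triples $A_{223}$ and $A_{234}$; substituting and collecting terms according to which of the three summands $\fr{p}_i$ is involved produces precisely the displayed expressions — in particular the characteristic combinations $\frac{x_1}{x_2x_3}-\frac{x_2}{x_1x_3}-\frac{x_3}{x_1x_2}$ etc. come from the $A_{234}$ contributions and the $\frac{x_2}{x_1^2}-\frac{2}{x_2}$ from $A_{223}$, exactly as in the $G/K$ Ricci formula for flag manifolds with three summands.

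The main obstacle is not any single hard estimate but the care needed in the cancellation step: separating, for each $k$, the ``diagonal'' part of the double sum (which recombines with $\frac{1}{2y_k}$ via the trace identity $\sum_{i,j}A_{kij}=d_k$) from the genuinely metric-dependent part, and making sure no permutation of a nonzero triple is double-counted or dropped. Once the $\E_8(8)$ formulas are verified, the $\G_2(2)$ case follows immediately by setting $A_{133}=0$, since Proposition~\ref{G22} asserts that the list of nonzero triples for $\G_2(2)$ is obtained from that of $\E_8(8)$ by exactly this deletion and Lemma~\ref{ric} depends on the decomposition only through the $A_{ijk}$ and the $d_i$; this yields the final sentence of the corollary.
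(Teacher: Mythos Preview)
Your approach is correct and is exactly the one the paper takes: the paper's entire proof is the single line ``By applying now Lemma~\ref{ric} we get that'', and your proposal is a faithful unpacking of that application, using Proposition~\ref{G22} to discard the vanishing $A_{ijk}$ and the trace identity $\sum_{i,j}A_{kij}=d_k$ to effect the cancellation of $\tfrac{1}{2y_k}$ for $k=0,1$. Nothing further is needed.
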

\subsection{The structure constants}
 We proceed with the computation of the non-zero structure constants. Two of them, namely $A_{234}$ and $A_{223}$ can be    directly obtained  using the unique K\"ahler-Einstein metric  that any flag manifold $M=G/K$ with $b_{2}(M)=1$ and $\fr{p}=\fr{p}_{1}\oplus\fr{p}_{2}\oplus\fr{p}_{3}$ admits. By inserting the  values $x_{1}=1$, $x_{2}=2$, $x_{3}=3$ in the homogeneous Einstein equation  $\{\bar{r}_{2}-\bar{r}_{3}=0, \bar{r}_{3}-\bar{r}_{4}=0\}$ where $\bar{r}_{i}$ $(i=2, 3, 4)$ are the components of Ricci tensor $\Ric_{( \ , \ )}$ associated to the K\"ahler C-space $(M=G/K, ( \ , \ ))$ and solving this system with respect to $A_{223}, A_{234}$, one computes that (see   \cite{stauros})
\begin{equation}\label{A234}
 A_{223}=\displaystyle\frac{d_2 d_3 + 2 d_2 d_4 - d_3 d_4}{d_2 + 4 d_3 + 9 d_4}, \qquad A_{234}=\displaystyle\frac{(d_2 + d_3) d_4}{d_2 + 4 d_3 + 9 d_4}.
 \end{equation}
   For $\G_2(2)$ this gives $A_{223}=2/3$ and $A_{234}=1/2$, and for $\E_8(8)$ we get $A_{223}=56/3$, $A_{234}=28/5$. Now, by  Lemma \ref{ric} we also compute that   
    \begin{equation}\label{dimen}
  \G_{2}(2):  \left\{  \begin{array}{ll}
  d_{0}=1 & = \ A_{022}+A_{033}+A_{044},   \\
    d_{1}=3&= \ A_{111}+A_{122}+A_{144}, \\
    d_{2}=4&= \ 2(A_{022}+A_{122}+A_{223}+A_{234}), \\
    d_{3}=2&= \ 2A_{033}+A_{223}+2A_{234},  \\
    d_{4}=4&= \ 2(A_{044}+A_{144}+A_{234}).
    \end{array}
\right. \quad   \E_8(8): \left\{  \begin{array}{ll}
    1 & = \ A_{022}+A_{033}+A_{044},   \\
    63&= \ A_{111}+A_{122}+A_{133}+A_{144}, \\
    112&= \ 2(A_{022}+A_{122}+A_{223}+A_{234}), \\
    56&= \ 2A_{033}+2A_{133}+A_{223}+2A_{234},  \\
    16&= \ 2(A_{044}+A_{144}+A_{234}).
    \end{array}
\right.
    \end{equation}
  \begin{lemma}\label{gg2}
  For the reductive decomposition (\ref{I3}) and for the left-invariant metric $\langle \ , \ \rangle$ on $\G_2(2)$ given by (\ref{invI3}), the non-zero $A_{ijk}$ are given explicitly  as follows:
  \[
  A_{022}=1/12, \  \  A_{033}=1/6, \  \  A_{044}=A_{122}=A_{144}=3/4, \  \  A_{111}=3/2, \  \      A_{223}=2/3, \  \ A_{234}=1/2.
\]
For  $\E_8(8)$, the corresponding non-zero triples $A_{ijk}$ attain the values
  \[
  \begin{tabular}{lllll}
  $A_{022}=7/30$, &  $A_{044}=3/10$, &  $A_{111}=84/5$, &  $A_{122}=63/2$, & $A_{144}=21/10$, \\
  $A_{033}=7/15$, & $ A_{133}=63/5$,  & $A_{223}=56/3$, &  $A_{234}=28/5$.
  \end{tabular}
\]
  \end{lemma}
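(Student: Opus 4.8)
The statement of Lemma~\ref{gg2} is purely computational: we must pin down the values of the nonzero triples $A_{ijk}$ for $\G_2(2)$ and $\E_8(8)$, given that we already know $A_{223}$ and $A_{234}$ from \eqref{A234} (via the K\"ahler--Einstein metric on the base) and the linear relations \eqref{dimen} coming from the trace identity $\sum_{i,j}A_{kij}=d_k$ of Lemma~\ref{ric}. First I would recall, for $\G_2(2)$, that the nonzero triples are exactly $A_{022},A_{033},A_{044},A_{111},A_{122},A_{144},A_{223},A_{234}$ by Proposition~\ref{G22}, and that $A_{223}=2/3$, $A_{234}=1/2$ were computed right after \eqref{A234}. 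Substituting these into the five equations of \eqref{dimen} leaves us with six unknowns ($A_{022},A_{033},A_{044},A_{111},A_{122},A_{144}$) and five equations, so the linear system alone does not determine everything; we need one more relation. The extra input comes from examining the subalgebra $\fr{k}_1\cong\fr{su}_2$ together with one of its modules: the Casimir (or, equivalently, a direct count using explicit root vectors of $\G_2$) gives the ratio between $A_{122}$ and $A_{144}$ — indeed both $\fr{m}_2$ and $\fr{m}_4$ restrict to $\fr{su}_2=\fr{m}_1$ as $\fr{su}_2$-modules of the same type, forcing $A_{122}=A_{144}$ — and similarly the ratio $A_{033}/A_{022}$ can be read off from the action of the center $\fr{k}_0\cong\fr{u}_1$ on $\fr{m}_2,\fr{m}_3$ via the corresponding root heights (the $\U_1$-charges are proportional to $\Hgt(\al_{i_o})$-weighted pairings, so $A_{0jj}/A_{0kk}$ is a ratio of squares of these charges). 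Once $A_{122}=A_{144}$ and the charge ratios are in hand, the system \eqref{dimen} closes and yields the stated values; one then verifies directly that $A_{022}=1/12$, $A_{033}=1/6$, $A_{044}=A_{122}=A_{144}=3/4$, $A_{111}=3/2$ solve all five equations.

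For $\E_8(8)$ the argument is the same in structure but with $A_{133}\neq 0$ (Proposition~\ref{G22}), so there is one more unknown and one more term in the third, fourth equations of \eqref{dimen}. Here $A_{223}=56/3$, $A_{234}=28/5$ are already known, and again the $\fr{su}_8=\fr{m}_1$-module structure of $\fr{p}_1,\fr{p}_2,\fr{p}_3$ gives the Casimir-type relations among $A_{122},A_{133},A_{144}$ (these are, up to the module dimensions $d_i$, proportional to the Casimir eigenvalue of $\fr{su}_8$ on each $\fr{m}_i$, which is determined by the highest weight, i.e. by which fundamental representation of $\SU_8$ the module is), while the $\fr{u}_1$-charges fix the ratios among $A_{022},A_{033},A_{044}$. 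Feeding these ratios into the five linear equations of \eqref{dimen} determines all eight nonzero triples, and a direct check confirms the tabulated values $A_{022}=7/30$, $A_{033}=7/15$, $A_{044}=3/10$, $A_{111}=84/5$, $A_{122}=63/2$, $A_{133}=63/5$, $A_{144}=21/10$, $A_{223}=56/3$, $A_{234}=28/5$.

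The only genuinely delicate point is obtaining the ``extra'' relations beyond the linear system \eqref{dimen} — i.e.\ justifying cleanly that $A_{122}=A_{144}$ (and the analogous charge/Casimir ratios) rather than merely asserting it. I would do this by choosing an explicit $B$-orthonormal basis of root vectors adapted to the decomposition \eqref{I3}: for the $\fr{u}_1$-charges one uses that $[\,H,\,E_\beta\,]=\langle\beta,\varpi\rangle E_\beta$ for the fundamental coweight $\varpi$ dual to $\al_{i_o}$, so that $A_{0jj}=(\text{charge on }\fr{m}_j)^2$ directly; the charge on $\fr{m}_j$ equals $j$ for $\fr{m}_j$ sitting in grading level $j-1$ of the $\bb{Z}$-grading defined by $\al_{i_o}$ (for $\G_2(2)$: levels $1,2,3$ on $\fr{m}_2,\fr{m}_3,\fr{m}_4$), and after normalizing $B|_{\fr{k}_0}$ one reads off the ratio $A_{022}:A_{033}:A_{044}$. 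For the $\fr{k}_1$-contribution one uses the standard fact that $\sum_j A_{1jj}$ restricted to a simple ideal equals $d_1$ minus the Casimir contribution, and that $A_{1jj}/d_j$ is the value of the quadratic Casimir of $\fr{k}_1$ on $\fr{m}_j$ in the normalization induced by $B$; since $\fr{m}_2$ and $\fr{m}_4$ are isomorphic as $\fr{su}_2$-modules for $\G_2(2)$ (both are copies of the $2$-dimensional standard representation, up to multiplicity), their Casimir values coincide and $A_{122}/d_2=A_{144}/d_4$, i.e.\ $A_{122}=A_{144}$ since $d_2=d_4=4$. The rest is bookkeeping with \eqref{dimen}, which I will not grind through here.
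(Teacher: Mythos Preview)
Your plan is correct and would succeed, but it takes a genuinely different route from the paper. The paper does \emph{not} use the $\U_1$-charge/Casimir data directly. Instead, for $\G_2(2)$ it passes to the coarser decomposition $\fr{g}=\fr{h}_1\oplus\fr{h}_2\oplus\fr{n}$ with $\fr{h}_1=\fr{k}_0\oplus\fr{p}_2\cong\fr{su}_2$, $\fr{h}_2=\fr{k}_1$, $\fr{n}=\fr{p}_1\oplus\fr{p}_3$ (this is the twistor fibration $\G_2/\U_2\to\G_2/\SO_4$), builds the associated three-parameter left-invariant metric, and observes that when the two metrics are set equal one must have $r_2=r_4$ and $r_0=r_3$; the relation $r_2=r_4$ yields $A_{022}+A_{223}=A_{044}$, which together with the Killing-metric relations \eqref{A044} pins down $A_{022}=1/12$. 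For $\E_8(8)$ the paper uses two such auxiliary fibrations, over $\E_8/\SU_9$ and over $\E_8/\SO_{16}$, each giving a block of new linear relations among the Ricci components.

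What your approach buys is directness: the identity $A_{0jj}=t_j^2 d_j/(\sum_k t_k^2 d_k)$ (note the factor $d_j$, which your statement ``$A_{0jj}/A_{0kk}$ is a ratio of squares of charges'' omits---you need $A_{0jj}/d_j\propto t_j^2$) determines all three $A_{0jj}$ at once, and then for \emph{both} groups the remaining equations in \eqref{dimen} already determine $A_{111},A_{122},A_{133},A_{144}$ without any appeal to the $\fr{k}_1$-Casimir; your observation $A_{122}=A_{144}$ for $\G_2(2)$ is true but redundant. What the paper's approach buys is uniformity: the fibration trick requires no knowledge of which irreducible $\fr{k}_1$-module each $\fr{m}_j$ is, only the existence of an intermediate subgroup $H$ with $K\subset H\subset G$ and $G/H$ isotropy irreducible---and this is exactly the machinery reused in Sections~\ref{TypeII} and \ref{TypeIII} for the higher-rank cases, where the representation-theoretic bookkeeping would be heavier.
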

\begin{proof}
\noindent{\bf Case of $\G_2(2)$.}  The  fourth equation in (\ref{dimen})  implies that   $A_{033}=(1/2)(d_{3}-A_{223}-2A_{234})=1/6$.  Therefore,  one gets a  system with four equations and five unknowns, namely $A_{022}, A_{044}, A_{111}, A_{122}, A_{144}$.  Consider the Killing metric defined by $y_{i}=1$ for any $i=0, \ldots, 4$.  This is a  bi-invariant  Einstein metric on $\G_2$ (hence also left-invariant), and thus it satisfies  the system of equations $\{r_{0}-r_{1}=0, r_{1}-r_{2}=0, r_{2}-r_{3}=0, r_{3}-r_{4}=0\}$.  Solutions of this system are given by $A_{111}=3/2$ and   
\begin{equation}\label{A044}
 A_{044}=5/6-A_{022}, \quad A_{122}=1/96 (80 - 96 A_{022}), \quad A_{144} ={2/3 + A_{022}}.
 \end{equation}
  Notice that the first equation coincides with the first equation in (\ref{dimen}), after replacing the previous value of $A_{033}$,  the second equation with the third equation  in (\ref{dimen}) after inserting  the values of $A_{223}, A_{234}$ and finally the last one is the same with the second or the firth equation in (\ref{dimen}), after substituting the values of $A_{122}$ and $A_{044}$, respectively, given above.
Hence,   it suffices to compute some of the triples appearing in (\ref{A044}). 
 Set
\[
 \fr{g}=\fr{h}_{1}\oplus\fr{h}_{2}\oplus\fr{n}, \quad \fr{h}_{1}:=\fr{k}_{0}\oplus\fr{p}_{2}, \quad \fr{h}_{2}:=\fr{k}_{1}, \quad \fr{n}:=\fr{p}_{1}\oplus\fr{p}_{3}.
\]
The space $\fr{h}_{1}=\fr{k}_{0}\oplus\fr{p}_{2}\cong\fr{m}_{0}\oplus\fr{m}_{3}$ is a Lie subalgebra of $\fr{g}=\fr{g}_{2}$ isomorphic to $\fr{su}_{2}$ and the same is true for $\fr{h}_{2}=\fr{k}_{1}\cong\fr{m}_{1}$. Thus $\fr{h}:=\fr{h}_{1}\oplus\fr{h}_{2}$ is a Lie subalgebra of $\fr{g}$ isomorphic to $\fr{su}_{2}\oplus\fr{su}_{2}$, and the above decomposition is $\Ad(H)$-invariant.  Here, $H=\SO_{4}$ is the connected Lie group corresponding to $\fr{h}$.  In this way  we define a fibration 
  \[
  \bb{C}P^{1}=\SO_{4}/\U_{2}\to G/K=\G_{2}/\U_{2}  \to G/H=\G_{2}/\SO_{4},
  \]
   with  the base space being irreducible symmetric space. 
    Left-invariant metrics  on $\G_2$ are given now by $\langle\langle \ , \ \rangle\rangle=w_{1}\cdot B|_{\fr{h}_{1}}+w_{2}\cdot B|_{\fr{h}_{2}}+w_{3}\cdot B|_{\fr{n}}$, with $w_{1}, w_{2}, w_{3}\in\bb{R}_{+}$.  This metric is $\Ad(H)$-invariant, thus also $\Ad(K)$-invariant  and  for $w_{1}=u_{0}=x_{2}$, $w_{2}=u_{1}$ and $w_{3}=x_{1}=x_{3}$ it coincides with the left-invariant metric $\langle \ , \ \rangle$ described before.  For these values, the same holds for the corresponding  Ricci tensors $\Ric_{\langle \ , \ \rangle}$ and $\Ric_{\langle\langle \ , \ \rangle\rangle}$. In particular,  let   us denote by $\bold{\tilde{r}}_{1}, \bold{\tilde{r}}_{2}, \bold{\tilde{r}}_{3}$ the components of  $\Ric_{\langle\langle \ , \ \rangle\rangle}$ with respect to the new left-invariant metric  $\langle\langle \ , \ \rangle\rangle$. Then,  for $u_{0}=w_{1}$, $x_{2}=w_{1}$, $u_{1}=w_{2}$, $x_{1}=w_{3}$ and $x_{3}=w_{3}$, it holds that
  \[
 \bold{\tilde{r}}_{1}=r_{0}=r_{3}, \quad \bold{\tilde{r}}_{2}=r_{1}, \quad \bold{\tilde{r}}_{3}=r_{2}=r_{4}.
 \]
 By using the  relation $r_{2}=r_{4}$ we  see that $A_{022} +A_{223} = A_{044}$ and thus  we get $A_{022}=1/12$.  Using relations   (\ref{A044})  we easily conclude.

  \noindent{\bf Case of $\E_8(8)$.}      By Proposition \ref{G22} for $\E_{8}(8)$ it is  $A_{133}\neq 0$. Therefore,  by (\ref{dimen}) and  (\ref{A234}) we cannot immediately compute $A_{033}$ (as we did for $\G_{2}(2)$).  We need to construct more equations. Set
        \[
    \fr{e}_{8}=\fr{g}=\fr{u}\oplus\fr{r}, \quad \fr{u}:=\fr{k}_{0}\oplus\fr{k}_{1}\oplus\fr{p}_{3}=\fr{k}\oplus\fr{p}_{3}, \quad \fr{r}:=\fr{p}_{1}\oplus\fr{p}_{2}
    \]
    Then,  the following inclusions hold $[\fr{u}, \fr{u}]\subset\fr{u}$, $[\fr{u}, \fr{r}]\subset\fr{r}$, i.e. $\fr{g}=\fr{u}\oplus\fr{r}$ is a reductive decomposition of the homogeneous space $G/U$,  where $U$ is the connected Lie subgroup generated by the Lie  algebra $\fr{u}$. Since $\fr{u}\subset \fr{g}$ we get the fibration $U/K\to G/K\to G/U$ where $G/K=\E_8/(\U_1\times\SU_{8})$ and $\fr{u}\cong\fr{su}(9)$. In full details
    \[
    \bb{C}P^{8}=\SU_{9}/\U_8\to \E_8/(\U_{1}\times\SU_8)\to \E_8/\SU_{9}
    \]
   where the base space is (strongly) isotropy irreducible but not a symmetric space.  Consider now left-invariant metrics on $\E_8$ given by $\langle\langle \ , \ \rangle\rangle= z_{1}\cdot B|_{\fr{u}}+z_{2}\cdot B|_{\fr{r}}$ with $z_{1}, z_{2}\in\bb{R}_{+}$.  This is an $\Ad(U)$-invariant metric and for $z_{1}=y_{0}=y_{1}=x_{3}$, $z_{2}=x_{1}=x_{2}$  coincides with the left-invariant metric $\langle \ , \ \rangle$, defined   by (\ref{invI3}).  For these values the associated Ricci components of  $(\E_{8}(8), \langle \ , \ \rangle)$ are such that
      $r_{0}=r_{1}=r_{4}$ and $r_{2}=r_{3}$.
       Hence, we get for example $63 A_{022} + 63 A_{033} - A_{122} - A_{133}=-441 + 10 A_{122} + 10 A_{133}$, $63 A_{044} - A_{111} - A_{144}=-1512 + 315 A_{044} + 40 A_{111}+ 355 A_{144}$ and $28 - 5 A_{022} + 10 A_{033} - 5 A_{122} + 10 A_{133}=0$. 
    After solving this system of equations, together with the equations obtained by the  Killing metric, we get that 
  \begin{equation}\label{e888}
 \begin{tabular}{l|l|l}
    $A_{044} = 3/10$, & $ A_{144} = 21/10$, & $ A_{122} = 476/15 - A_{022}$, \\ 
    $A_{111} = 84/5$, & $A033 = 7/10 - A_{022}$, & $A_{133} = 371/30 + A_{022}.$ 
 \end{tabular} 
 \end{equation}
 Now it is sufficient  to compute $A_{022}$.  Set    
        \[
    \fr{g}=\fr{h}\oplus\fr{n}, \quad \fr{h}:=\fr{k}_{0}\oplus\fr{k}_{1}\oplus\fr{p}_{2},\quad \fr{n}:=\fr{p}_{1}\oplus\fr{p}_{3}.
    \]
    In this case  one can easily prove that  $[\fr{h}, \fr{h}]\subset\fr{h}$, $[\fr{h}, \fr{n}]\subset\fr{n}$ and $[\fr{n}, \fr{n}]\subset\fr{h}$. For dimensional reasons it  is $\fr{h}\cong\fr{so}(16)$, or in other words, we get a fibration  $H/K\to G/K\to G/H$ with $\fr{n}\cong T_{o'}G/H$ $(o'=eH\in G/H)$. Both  the fiber $H/K$ and the base space $G/K$  are 
   irreducible symmetric spaces, in particular the fibration 
            \[
    \SO_{16}/\U_{8}\to \E_8/(\U_1\times\SU_8)\to \E_8/\SO_{16}. 
    \]
    is the twistor fibration of the  flag manifold $\E_8/\U_{8}\cong\E_8/(\U_1\times\SU_{8})$ over the symmetric space $\E_8/\SO_{16}$.
Consider new left-invariant metrics  on $\E_8$ related to the decomposition   $\fr{g}=\fr{h}\oplus\fr{n}$, i.e. $\langle\langle \ , \ \rangle\rangle'=v_{1}\cdot B|_{\fr{h}}+v_{2}\cdot B|_{\fr{n}}$ for some $v_{1}, v_{2}\in\bb{R}_{+}$.   For $v_1=u_{0}=u_{1}=x_{2}$ and $v_{2}=x_{1}=x_{3}$ this metric  coincides with the left-invariant metric $\langle \ , \  \rangle$.  In a similar way, we get the relations $r_{0}=r_{1}=r_{3}$ and $r_{2}=r_{4}$,   which imply now $63 A_{022} + 63 A_{044} - A_{122} - A_{144}=0$ and $3 A_{033} - A_{111} - A_{133}=0$. By combining  with (\ref{e888})  we conclude. 
   \end{proof}   
 \begin{remark}\label{sakidea}
 \textnormal{An alternative way to compute $A_{111}$   is given as follows. For $G\in\{\G_{2}(2), \E_8(8)\}$ consider the decomposition (\ref{I3}).  Let $K_{1}$  be the  connected  Lie (sub)group generated by the simple ideal $\fr{k}_{1}$. Since $K_{1}\subset G$, there exists a positive number $c>0$ such that $B_{K_{1}}=c\cdot B_{G}$, where $B_{G}$ denotes the negative of the Killing form of $G$. Let 
 \[
 a_{111}:=\widetilde{\displaystyle{1 \brack {11}}}
 \] the triple associated to $K_1$ with respect to $B_{K_{1}}$ (as a compact simple Lie group). Then, by Lemma \ref{ric} we get $a_{111}=\dim{\fr{k}_{1}}=:d_{1}$. On the other hand, by the definition of $A_{111}$ it is easy to see that 
 $A_{111}=c\cdot a_{111}$ (for Lie groups with roots of the same length). For  $\G_2$  one has to notice that $\fr{k}_{1}\cong\fr{su}(2)$ is generated by the long root $\al_1$, in particular $B_{\G_{2}}(\al_1, \al_1)=3B_{\G_{2}}(\al_2, \al_2)$.   Because $c=B_{\SU_{2}}/B_{\G_{2}}=4/24$,   it follows that  $A_{111}=3(c\cdot \dim\fr{su}_{2})=3/2$. For $\E_8(8)$  it is $\fr{k}_{1}\cong\fr{su}(8)$ and all the roots have the same length. In particular,   $c=B_{\SU_{8}}/B_{\E_8}=16/60$, hence $A_{111}=c\cdot\dim\fr{su}_{8}=84/5$.}
 \end{remark}

\subsection{Naturally reductive metrics} 
For a Lie group $G\cong G(i_{o})$ of Type $I_{b}(3)$, 
 left-invariant metrics  on $\G\cong G(i_{o})$  which are $\Ad(K)$-inavariant are given by $(\ref{invI3})$.
 
 \begin{prop}\label{prop4.6}
If a left invariant metric $\langle \ , \ \rangle$ of the form $(\ref{invI3})$  on $G\cong G(i_{o})$ of Type $I_{b}(3)$  is naturally reductive  with respect to $G \times L$ for some closed subgroup $L$ of $G$, 
then one of the following holds: 

$(1)$ for $G =\G_2(2)$,  $u_0 = x_2 $,   $x_{1} = x_{3}$, and  for $G =\E_8(8)$, $u_0 = u_1 = x_2 $,   $x_{1} = x_{3}$ \quad 
$(2)$   $u_0  = u_1 =x_3$,   $x_{1} = x_{2}$  
\quad 
$(3)$ $ x_{1} = x_{2} = x_{3} $.

Conversely, 
 if  one of the conditions $(1)$, $(2)$, $(3)$  holds, then the metric 
 $\langle \ , \ \rangle$ of the form  $(\ref{invI3})$ is  naturally reductive  with respect to $G \times L$ for some closed subgroup $L$ of $G$.
  \end{prop}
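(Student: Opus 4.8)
Both directions of the equivalence reduce to the D'Atri--Ziller classification (Theorem~\ref{ziller}) for the compact simple Lie group $G\in\{\G_2(2),\E_8(8)\}$. For the sufficiency part I would, for each of the three conditions, produce a closed subgroup realising $\langle\ ,\ \rangle$ in the normal form of Theorem~\ref{ziller} and then apply its forward implication. Condition $(3)$ corresponds to $L=K$: if $x_1=x_2=x_3=:x$ then $(\ref{invI3})$ becomes $u_0\cdot B|_{\fr{k}_0}+u_1\cdot B|_{\fr{k}_1}+x\cdot B|_{\fr{p}}$, which is of D'Atri--Ziller type since $\dim\fr{k}_0\le1$. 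Conditions $(1)$ and $(2)$ are realised by the fibrations already constructed in the proof of Lemma~\ref{gg2}: for $(1)$ take $L=\SO_4$ (for $\G_2$), resp.\ $L=\SO_{16}$ (for $\E_8$), so that $\fr{h}=\fr{k}_0\oplus\fr{k}_1\oplus\fr{p}_2$ with $\fr{h}^{\perp_B}=\fr{p}_1\oplus\fr{p}_3$; for $(2)$ take $L=\SU_3$ (for $\G_2$, so $G/L=S^6$), resp.\ $L=\SU_9$ (for $\E_8$), so that $\fr{h}=\fr{k}\oplus\fr{p}_3$ with $\fr{h}^{\perp_B}=\fr{p}_1\oplus\fr{p}_2$. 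In each case one checks directly that $\langle\ ,\ \rangle$ is a positive multiple of $B$ on each simple ideal of $\fr{h}$ and on $\fr{h}^{\perp_B}$; the reason the parameters are constrained differently is that for $\E_8$ (and for $\G_2$ in case $(2)$) the subalgebra $\fr{h}$ is \emph{simple}, forcing $u_0,u_1$ to equal $x_2$, resp.\ $x_3$, whereas for $\G_2$ in case $(1)$ one has $\fr{h}\cong\fr{su}_2\oplus\fr{su}_2$ with $\fr{k}_0\oplus\fr{p}_2$ as one simple ideal, forcing only $u_0=x_2$.

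For the necessity part, assume $\langle\ ,\ \rangle$ as in $(\ref{invI3})$ is naturally reductive with respect to $G\times L$. By Theorem~\ref{ziller} there is a closed subgroup $H\subseteq G$ with $\fr{h}=\fr{h}_0\oplus\fr{h}_1\oplus\cdots\oplus\fr{h}_p$ (centre and simple ideals), $\fr{f}:=\fr{h}^{\perp_B}$, and positive reals such that
\[
\langle\ ,\ \rangle=u_0'\cdot b|_{\fr{h}_0}+u_1'\cdot B|_{\fr{h}_1}+\cdots+u_p'\cdot B|_{\fr{h}_p}+x'\cdot B|_{\fr{f}}.
\]
Let $\Lambda$ be the $B$-self-adjoint endomorphism of $\fr{g}$ defined by $\langle X,Y\rangle=B(\Lambda X,Y)$. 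By $(\ref{invI3})$ it is a scalar on each summand of $(\ref{I3})$, so each of its eigenspaces is a sum of some of the $\fr{m}_i$. Comparing the two expressions for $\langle\ ,\ \rangle$ gives $\Lambda|_{\fr{f}}=x'\Id$, hence $\fr{f}\subseteq E$ for $E$ the $x'$-eigenspace of $\Lambda$, and any simple ideal $\fr{h}_i$ with $\fr{h}_i\cap E\neq0$ must have $u_i'=x'$ and hence lie in $E$. Using that $\fr{h}=\fr{f}^{\perp_B}$ is a subalgebra, together with the bracket relations of Proposition~\ref{G22} (especially $A_{022},A_{122},A_{223},A_{234}\neq0$, and $A_{133}\neq0$ for $\E_8$), one verifies that the only subspaces $\fr{f}$ spanned by modules of $(\ref{I3})$ whose $B$-orthogonal complement is a subalgebra are $\fr{p}$, $\fr{p}_1\oplus\fr{p}_3$ and $\fr{p}_1\oplus\fr{p}_2$ (together with enlargements that occur when further parameters coincide, which only strengthen the conditions below). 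For these, $\Lambda|_{\fr{f}}=x'\Id$ forces respectively $x_1=x_2=x_3$, $x_1=x_3$, $x_1=x_2$, and matching $\Lambda$ against the simple-ideal decomposition of the corresponding $\fr{h}$ ($\fr{k}$, $\fr{k}_0\oplus\fr{k}_1\oplus\fr{p}_2$, $\fr{k}\oplus\fr{p}_3$) gives the remaining equalities, which are exactly $(3)$, $(1)$, $(2)$.

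The step I expect to be the main obstacle is the completeness of this last case analysis: one has to show that $\fr{f}$ may indeed be taken to be a sum of the modules $\fr{m}_i$ of $(\ref{I3})$ — i.e.\ that no ``exotic'' $B$-orthogonal complement of a subalgebra, sitting inside a degenerate $\Lambda$-eigenspace but not $\Ad(K)$-invariant, can arise — and then to dispose of each admissible candidate by a structure-constant computation using Proposition~\ref{G22}. For instance $[\fr{m}_3,\fr{m}_4]\subseteq\fr{m}_2$ with $A_{234}\neq0$ rules out $\fr{f}=\fr{p}_1$ and $\fr{f}=\fr{p}_2$; $[\fr{m}_2,\fr{m}_2]\subseteq\fr{m}_3\oplus\fr{k}$ with $A_{223}\neq0$ rules out $\fr{f}=\fr{p}_2\oplus\fr{p}_3$, while $A_{122},A_{022}\neq0$ rule out any $\fr{f}$ meeting $\fr{k}_1$ or $\fr{k}_0$. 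The remaining bookkeeping — in particular tracking whether the relevant $\fr{h}$ is simple or a sum of two copies of $\fr{su}_2$, which is precisely what makes condition $(1)$ differ between $\G_2$ and $\E_8$ — is then routine.
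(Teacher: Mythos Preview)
Your sufficiency argument matches the paper's (same subgroups $L$, same reason for the $\G_2$/$\E_8$ discrepancy in~(1)). For necessity the paper takes a slightly different---and cleaner---route that eliminates precisely the obstacle you flag. Rather than pulling an abstract $H$ out of the converse of Theorem~\ref{ziller} and then worrying whether $\fr{f}=\fr{h}^{\perp_B}$ decomposes along the $\fr{m}_i$, the paper works directly with the given $L$: it takes $\fr{h}$ to be the subalgebra of $\fr{g}$ \emph{generated by} $\fr{l}$ and $\fr{k}$. Because $\fr{k}\subset\fr{h}$, this $\fr{h}$ is automatically $\Ad(K)$-invariant, so by irreducibility and inequivalence of the $\fr{p}_i$ it equals $\fr{k}$ together with some subset of $\{\fr{p}_1,\fr{p}_2,\fr{p}_3\}$. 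The bracket relations from Proposition~\ref{G22} ($[\fr{p}_1,\fr{p}_1]\cap\fr{p}_2\neq 0$, $[\fr{p}_1,\fr{p}_2]\cap\fr{p}_3\neq 0$) then force $\fr{h}\in\{\fr{g},\ \fr{k}\oplus\fr{p}_2,\ \fr{k}\oplus\fr{p}_3\}$, yielding bi-invariance, case~(1), or case~(2); the remaining possibility $\fr{l}\subset\fr{k}$ gives $\fr{l}^\perp\supset\fr{p}$ and hence case~(3). Thus your eigenspace bookkeeping and the attendant worry about ``exotic'' non-$\Ad(K)$-invariant complements are replaced by a two-line enumeration of intermediate subalgebras $\fr{k}\subset\fr{h}\subset\fr{g}$.
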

  \begin{proof}
   Let ${\frak l}$ be the Lie algebra of  $L$. Then we have either ${\frak l} \subset {\frak k}$  or ${\frak l} \not\subset {\frak k}$. 
First we consider the case of  ${\frak l} \not\subset {\frak k}$. Let ${\frak h}$ be the subalgebra of ${\frak g}$ generated by ${\frak l}$ and ${\frak k}$. 
Since 
$ \fr{g}=\fr{k}_{0}\oplus\fr{k}_{1}\oplus\fr{p}_{1}\oplus\fr{p}_{2}\oplus\fr{p}_{3}$ is an irreducible decomposition as $\mbox{Ad}(K)$-modules, we see that the Lie algebra $\frak h$  contains  at least one of  ${\frak p}_{1}$,  ${\frak p}_{2}$, ${\frak p}_{3}$. 
We first consider the case that $\frak h$  contains ${\frak p}_{1}$. 
 Since 
$\left[{\frak p}_{1}, {\frak p}_{1}\right] \cap {\frak p}_2 \neq \{0\}$, the space $\frak h$  contains ${\frak p}_{2}$.  
Notice  also that  $\left[{\frak p}_{1}, {\frak p}_{2}\right] \cap {\frak p}_3 \neq \{0\}$, hence  $\frak h$ contains ${\frak p}_{3}$. Thus we see that  $\frak h = \frak g $ and  the $\Ad(L)$-invariant metric $\langle  \ ,  \ \rangle$ of the form $(\ref{invI3})$ is bi-invariant. 
Now, if $\frak h$  contains ${\frak p}_{2}$, then $\fr{ h} \supset {\frak k}\oplus {\frak p}_{2}$. If $\fr{ h} =\fr{k}\oplus {\frak p}_{2}$, then $(\fr{ h}, \fr{p}_{1}\oplus\fr{p}_{3})$ is a symmetric pair. Thus the metric $\langle  \ , \   \rangle$ of the form $(\ref{invI3})$ satisfies $u_0 = x_2 $,   $x_{1} = x_{3}$ for $G =\G_2(2)$ and   $u_0 = u_1 = x_2 $,   $x_{1} = x_{3}$  for $G =\E_8(8)$. If $\fr{ h} \neq\fr{k}\oplus {\frak p}_{2}$, we see that $\fr{h}\cap {\frak p}_1\neq\{0\}$ or $\fr{h}\cap {\frak p}_3\neq\{0\}$ and thus $\fr{ h} \supset {\frak p}_1$ or $\fr{ h} \supset {\frak p}_3$. Hence, we obtain $\fr{ h} = \fr{g}$ and the $\Ad(L)$-invariant metric $\langle \ ,\   \rangle$ of the form $(\ref{invI3})$ is bi-invariant. 
Next if $\frak h$  contains ${\frak p}_{3}$, then $\fr{ h} \supset {\frak k}\oplus {\frak p}_{3}$. If $\fr{ h} =\fr{k}\oplus {\frak p}_{3}$, then $\fr{ h}$ is a simple Lie algebra, in fact, for $G =\G_2(2)$ we see that $\fr{ h} = \fr{su}_{3}$ and for $G=\E_8(8)$ we see that $\fr{h} = \fr{su}_{9}$,   and ${\frak p}_{1}\oplus {\frak p}_{2}$ is an irreducible $\Ad(H)$-module. 
Thus the metric $\langle  \ ,\   \rangle$ of the form $(\ref{invI3})$ satisfies $u_0 = u_1 =  x_3 $,   $x_{1} = x_{2}$. 
If $\fr{ h} \neq\fr{k}\oplus {\frak p}_{3}$, we see that $\fr{ h}\cap {\frak p}_1\neq\{0\}$ or $\fr{h}\cap {\frak p}_2\neq\{0\}$ and thus $\fr{h} \supset {\frak p}_1$ or $\fr{ h} \supset {\frak p}_2$. Hence, we obtain $\fr{ h} = \fr{g}$ and the $\Ad(L)$-invariant metric $\langle  \ ,\   \rangle$ of the form $(\ref{invI3})$ is bi-invariant. 

 We proceed with  the case ${\frak l} \subset {\frak k}$.  Because the  orthogonal complement
 ${\frak l}^{\bot}$ of ${\frak l}$ with respect to $B$ contains the  orthogonal complement 
${\frak k}^{\bot}$ of ${\frak k}$, it follows  that ${\frak l}^{\bot} \supset {\frak p}_{1} \oplus  {\frak p}_{2}\oplus  {\frak p}_{3}$.   
Since the  invariant metric $\langle \ , \  \rangle$ is naturally reductive  with respect to $G\times L$,  
we conclude that  $ x_{1} = x_{2} = x_{3} $ by  Theorem \ref{ziller}.   

Conversely, 
 if the conditions $(1)$  holds, then   Theorem \ref{ziller} sates that the metric 
 $\langle \ ,   \rangle$ given by  $(\ref{invI3})$ is  naturally reductive  with respect to $G \times L$, where $\fr{l} = \fr{k}\oplus {\frak p}_{2}$.  If the condition  $(2)$  holds, then the metric $\langle \ , \ \rangle$ given by $(\ref{invI3})$ is  naturally reductive  with respect to $G \times L$ where $\fr{l} = \fr{k}\oplus {\frak p}_{3}$.  Finally, if the condition  $(3)$  holds, then the metric  defined by $(\ref{invI3})$ is  naturally reductive  with respect to $G \times K$. 
  \end{proof}

    \subsection{Einstein metrics}
 Due to  Lemma \ref{gg2}  and Remark \ref{dimg2},  Corollary \ref{ricg2} determines now explicitly   the Ricci tensor $\Ric_{\langle \ , \ \rangle}$ of the Lie groups $(\G_{2}(2), \langle \ , \ \rangle)$ and  $(\E_{8}(8), \langle \ , \ \rangle)$.  Recall that the homogeneous Einstein equation for the left-invariant metric $\langle \ , \ \rangle$  is given by
    \[
    \{ r_{0}-r_{1}=0,  \ r_{1}-r_{2}=0,\  r_{2}-r_{3}=0, \ r_{3}-r_{4}=0\}.
    \]
    
\noindent{\bf \underline{Case of $\G_2(2)$}}
    
    We normalize the  metric  by setting $x_3=1$. Then, we see that the homogeneous Einstein equation is equivalent to the following system of equations: 
\begin{equation}\label{eing2}
\left\{  \begin{array}{lll}
  f_1 &=&9 {u_0} {u_1} {x_1}^2 {x_2}^2+2
   {u_0} {u_1} {x_1}^2+{u_0} {u_1}
   {x_2}^2-3 {u_1}^2 {x_1}^2 {x_2}^2-3
   {u_1}^2 {x_2}^2-6 {x_1}^2
   {x_2}^2=0,\\ & & \\
  f_2&=&{u_0} {u_1} {x_2}+6 {u_1}^2
   {x_1}^2 {x_2}+15 {u_1}^2 {x_2}-6
   {u_1} {x_1}^3+6 {u_1} {x_1}
   {x_2}^2-48 {u_1} {x_1} {x_2}+6
   {u_1} {x_1}\\ 
   & & +8 {u_1} {x_2}^2+12
   {x_1}^2 {x_2}= 0, \\ \\
  f_3&=&4 {u_0} {x_1}^2-{u_0}
   {x_2}^2-9 {u_1} {x_2}^2+18 {x_1}^3
   {x_2}-32 {x_1}^2 {x_2}-18 {x_1}
   {x_2}^3+48 {x_1} {x_2}^2\\ 
   & &+6 {x_1}
   {x_2}-16 {x_2}^3 = 0, \\ \\
  f_4&=&9 {u_0} {x_1}^2
   {x_2}^2-4 {u_0} {x_1}^2+9 {u_1}
   {x_1}^2 {x_2}^2-6 {x_1}^3 {x_2}-48
   {x_1}^2 {x_2}^2+32 {x_1}^2 {x_2}\\ 
   & &+18
   {x_1} {x_2}^3-18 {x_1} {x_2}+8
   {x_2}^3= 0.
  \end{array} \right.
 \end{equation}
Consider the polynomial ring $R= {\mathbb Q}[z,  u_0,  u_1,  x_1, x_2] $ and the ideal $I$, generated by polynomials $\{ \,z \,  u_0 \,  u_1 \, x_1 \, x_2-1,$  $f_1, \, f_2, \, f_3, \, f_4\}$.   We take a lexicographic ordering $>$,  with $ z >  u_0 >  u_1 > x_2 > x_1$ for a monomial ordering on $R$. Then, by the aid of computer, we see that a  Gr\"obner basis for the ideal $I$ contains a  polynomial  of   $x_1$  given by 
$( x_1-1 ) ( 9 x_1- 11 ) h_1(x_1)$, where
$h_1(x_1)$ is a polynomial  of degree 39 of the form
{\small 
\begin{equation*} \begin{array}{l} 
h_1(x_1)=578531204393508729 {x_1}^{39}-2907419178698478477
   {x_1}^{38}+9728488450924774839
   {x_1}^{37}\\
   -25248552448377295323
   {x_1}^{36} 
   +56466222208555751172
   {x_1}^{35}-112561770533625695268
   {x_1}^{34}\\
   +198560662542278445420
   {x_1}^{33}-303749957092666314564
   {x_1}^{32}+403108239570614919684
   {x_1}^{31}\\-461651040693288248940
   {x_1}^{30}+457287888650310982692
   {x_1}^{29}-385901524001421918252
   {x_1}^{28}\\+263698285244084996724
   {x_1}^{27}-121027010977188296460
   {x_1}^{26}-3723246665533789740
   {x_1}^{25}\\+82231517231748069876
   {x_1}^{24}-102191761959692380074
   {x_1}^{23}+82264683344411071386
   {x_1}^{22} \\
   -47064365277272324622
   {x_1}^{21}+17165576727452434302
   {x_1}^{20}-1543691819299617324
   {x_1}^{19} \\
   -4724193236441019084
   {x_1}^{18}+4420128602614576596
   {x_1}^{17}-3155137453029513948
   {x_1}^{16}\\ +1592256867663356196 {x_1}^{15}-768600370620963068
   {x_1}^{14}+294461889742168084 {x_1}^{13} \\
   -112323449859022284
   {x_1}^{12}+34778832154783148 {x_1}^{11}-10968398600557556
   {x_1}^{10}\\ +2775906314750316 {x_1}^9-734820705350612
   {x_1}^8+149714619756553 {x_1}^7 \\
   -33449638312869
   {x_1}^6+5237027887391 {x_1}^5-997188227243
   {x_1}^4 \\ +107164942344 {x_1}^3-17880627984
   {x_1}^2+968584320 {x_1}-150264576. 
\end{array}
\end{equation*}
}
We also remark  that in the Gr\"obner basis, $u_0,  u_1,   x_2$ are given by polynomials of  degree 40  of $x_1$ with coefficients of rational numbers. 
Solving $h_1(x_1)=0$ numerically, we get only one solution, which is given approximately by $x_1 \approx 0.93245951$. Further, we see that a solution of the system of equations $\{f_1=0, f_2=0, f_3=0, f_4=0, h_1(x_1)=0 \}$ has the form by 
$$\{u_0 \approx 1.0851961, \ u_1 \approx 0.69929486,  \ x_1 \approx 0.93245951, \ x_2 \approx 1.0225069 \}. $$
Due to Proposition  \ref{prop4.6},  we conclude that this solution induces  a  non-naturally reductive Einstein metric.

For $ x_1= 11/9$, the system $\{f_1=0, f_2=0, f_3=0, f_4=0 \}$ has a solution,  given by 
$$\{ u_0 = 1, \ u_1 = 1,  \ x_1 = 11/9, \ x_2 =11/9 \}.$$ 
For $ x_1= 1$, we get  $u_0 = x_2$, $({x_2}-1) (875 {x_2}^3-1165 {x_2}^2+250 {x_2}-14) =0$ and $ {u_1}=(1750 {x_2}^3-4080 {x_2}^2+2585 {x_2}-192)/63 $. Thus,  we get solutions of the system of equations $\{f_1=0, f_2=0, f_3=0, f_4=0 \}$, namely \begin{equation*} 
 \begin{array}{l}
\{ u_0 \approx 0.095267235, \ u_1 \approx 0.29761039,  \ x_1 =1, \ x_2 \approx 0.095267235 \}, \\ 
\{ u_0 \approx 0.15539816, \ u_1 \approx 1.8689705,  \ x_1 =1, \ x_2 \approx 0.15539816 \}, \\ 
\{ u_0 \approx 1.0807632, \ u_1 \approx 0.71913340,  \ x_1 =1, \ x_2 \approx 1.0807632 \} 
\end{array}
\end{equation*} 
and $ u_0 = u_1 = x_1 = x_2 = 1$. 
By Proposition \ref{prop4.6}, one can  deduce that these values give  rise to naturally reductive Einstein metrics.

\medskip

\noindent{\bf \underline{Case of $\E_8(8)$}}

For a normalization of the metric  $x_3=1$,   the homogeneous Einstein equation is equivalent to the following system of equations: 
\begin{equation}\label{eing22}
 \left\{ \begin{tabular}{l}
 $g_1=14 u_0 u_1 {x_1}^2 - 6 {u_1}^2 {x_1}^2 + 7 u_0 u_1 {x_2}^2 - 15 {u_1}^2 {x_2}^2 - 
 8 {x_1}^2 {x_2}^2 + 9 u_0 u_1 {x_1}^2 {x_2}^2 - {u_1}^2 {x_1}^2 {x_2}^2 =0$,\\ \\  
  $g_2=48 {u_1}^2 {x_1}^2 + 24 u_1 x_1 x_2 - 24 u_1 {x_1}^3 x_2 + u_0 u_1 {x_2}^2 + 
 255 {u_1}^2 {x_2}^2 - 480 u_1 x_1 {x_2}^2 + 64 {x_1}^2 {x_2}^2 $\\ 
 $+$ $8 {u_1}^2 {x_1}^2 {x_2}^2 
  +  80 u_1 {x_2}^3   + 24 u_1 x_1 {x_2}^3  =0$, \\  \\  
   $g_3=14 u_0 {x_1}^2 + 108 u_1 {x_1}^2 + 24 x_1 x_2 - 320 {x_1}^2 x_2 + 72 {x_1}^3 x_2 - 
 u_0 {x_2}^2 - 135 u_1 {x_2}^2 
  + 480 x_1 {x_2}^2$\\ $-$   $ 160 {x_2}^3 - 72 x_1 {x_2}^3 =0$,\\   \\  
   $g_4=-4 u_0 {x_1}^2 - 108 u_1 {x_1}^2 - 216 x_1 x_2 + 320 {x_1}^2 x_2 + 120 {x_1}^3 x_2 - 
  480 {x_1}^2 {x_2}^2 + 9 u_0 {x_1}^2 {x_2}^2  $ \\ $ + 63 u_1 {x_1}^2 {x_2}^2  
 + 80 {x_2}^3  + 
 216 x_1 {x_2}^3 = 0$.
  \end{tabular} \right.
 \end{equation}
 We consider the polynomial ring $R= {\mathbb Q}[z,  u_0,  u_1, x_1,  x_2] $ and an ideal $I$, generated by polynomials $\{ \,z \,  u_0 \, u_1 \, x_1 \, x_2-1,$  $g_1, \, g_2, \, g_3, \, g_4\}$.   We choose the lexicographic ordering $>$,  with $ z >  u_0 >  u_1 > x_2 > x_1$ for a monomial ordering on $R$. Then,  a  Gr\"obner basis for the ideal $I$ contains a  polynomial  of   $x_1$,  given by 
$({x_1}-1) (9 {x_1}-41) k_1(x_1)$, where
$k_1(x_1)$ is a polynomial  of degree 49 explicitly defined as follows:

{ \small 
 \begin{equation*} 
\begin{array}{l} k_1(x_1)=78627620134518984869670299619225
   {x_1}^{49}-1879012156849489779707699697741525
   {x_1}^{48} \\ 
   +24958517859683233851773742581453130
   {x_1}^{47}-226345877119100379348478414653942870
   {x_1}^{46} \\
   +1513185548162200779194248616933419791
   {x_1}^{45}-7767087422550952023317555159123235339
   {x_1}^{44} \\
   +30802377992721718905775487881405527156
   {x_1}^{43}-91469740820715969757364654368958734704
   {x_1}^{42} \\
   +190873255583958103333447847157388070763
   {x_1}^{41}-251450263614444457947297352429222170207
   {x_1}^{40} \\
   +174697300086572957060190519165619552914
   {x_1}^{39}-348342277415980322916046484535937167846
   {x_1}^{38}\\ 
   +3002677791406351946122412362593386235357
   {x_1}^{37}-14163273159456167553892516550563377195873
   {x_1}^{36} \\+44027812533760332962711411379822436468296
   {x_1}^{35}-103359433218793762175332925928788224767564
   {x_1}^{34}\\ +195341684679658672333566251584926273888618
   {x_1}^{33}-305149823721731252899197653136452852072562
   {x_1}^{32}\\ +393503106465236359948116939782270035587444
   {x_1}^{31}-405394724912565051721521587945499542805996
   {x_1}^{30}\\ +272987735753676571252110160502472211820742
   {x_1}^{29}+67527266567097795889985999048107668548322
   {x_1}^{28}\\ -648798451413297437284550685137354441143464
   {x_1}^{27}+1406965042656066428973970557643766067268176
   {x_1}^{26}\\ -2175743101927487127818021138090846891067546
   {x_1}^{25}+2652679256280234163460510381340166807164594
   {x_1}^{24}\\ -2558905970930356615952062891716100348759020
   {x_1}^{23}+1819840234201169247432197581725018033034180
   {x_1}^{22}\\ -573110366219719484295636119428516440072790
   {x_1}^{21}-898192008904623391867803363166579084394930
   {x_1}^{20}\\ +2196777571676601155406190148800336652452480
   {x_1}^{19}-3047573028495615879932513597313365789288840
   {x_1}^{18}\\ +3388852959467640315510569932345608419421165
   {x_1}^{17}-3273630300427318361235238941319922925655545
   {x_1}^{16}\\ +2823340833679036161326179394502997732627650
   {x_1}^{15}-2191869205433840326556829886605796355495550
   {x_1}^{14}\\ +1538080739419688790208901918515707596638875
   {x_1}^{13}-976760862170802820093764370749377583728775
   {x_1}^{12}\\ +563675702099787379912980508814827349226900
   {x_1}^{11}-295654068049591813276291328141962741142400
   {x_1}^{10}\\ +141495683505513028215917983599798929865375
   {x_1}^9-61509598161488435381920159529683875449475
   {x_1}^8\\ +24312853879710780673048343979366706124250
   {x_1}^7-8664729466196887937652173231674074174750
   {x_1}^6\\ +2770234999000558646762022239898046895625
   {x_1}^5-786852533021232630512967801167059768125
   {x_1}^4\\ +194733271361911111513857132379687575000
   {x_1}^3-41207441950449158811069657833488687500
   {x_1}^2\\ +6741277041530521149993243000562500000
   {x_1}-859782169024171126981444722656250000. 
    \end{array}
    \end{equation*}
}

Notice  that  in the Gr\"obner basis, $u_0,  u_1,   x_2$ are given by polynomials of  degree 50  of $x_1$ with coefficients of rational numbers. Solving $k_1(x_1)=0$ numerically, we get three positive  and two negative solutions,  which are given approximately by $x_1 \approx 0.46131382, x_1 \approx 0.91172474, x_1 \approx 4.0130840$ and  $x_1 \approx  -1.2146356, x_1 \approx -1.1542138$.  Moreover,  solutions of the system   $\{g_1=0, g_2=0,  g_3=0, g_4=0 \}$ have the approximate form
\begin{equation*} 
 \begin{array}{l}
 \{ u_0 \approx 1.0767925, \ u_1 \approx 0.12842350,  \ x_1 \approx 0.46131382, \ x_2 \approx 0.73659849 \},\\
\{ u_0 \approx 0.77844700, \ u_1 \approx 0 .17409566,  \ x_1 \approx 0.91172474, \ x_2 \approx 0.52532563 \}, \\
\{ u_0 \approx 1.1022316, \ u_1 \approx 0.85179391,  \ x_1 \approx 4.0130840, \ x_2 \approx 4.0222155 \}   
\end{array}
\end{equation*} 
and 
\begin{equation*} 
 \begin{array}{l}
\{ u_0 \approx -1.3411877, \ u_1 \approx -0.75642675,  \ x_1 \approx -1.2146356, \ x_2 \approx -4.9166783 \},
\\ 
\{ u_0 \approx -1.4503818, \ u_1 \approx -0.54000582,  \ x_1 \approx -1.1542138, \ x_2 \approx -4.7370866 \}.
\end{array}
\end{equation*} 
Thus, we obtain three Einstein metrics which  are non-naturally reductive, by Proposition \ref{prop4.6}. 
After  computing the related scale invariants (cf.  \cite[Section 7]{Chry2}) we deduce  that these metrics are non-isometric each other. 

For $ x_1= 9/41$, the system of equation  $\{f_1=0, f_2=0, f_3=0, f_4=0 \}$  has a solution, given by 
$$\{ u_0 = 1, \ u_1 = 1,  \ x_1 = 9/41, \ x_2 = 9/41 \}.$$ 
For $ x_1= 1$, we see that $ u_0 =  u_1 = x_2 = 7/23$ and $ u_0 = u_1 = x_1 = x_2 = 1$. 
Due to Proposition \ref{prop4.6}, these solutions define naturally reductive Einstein metrics.

   \section{Left-invariant non-naturally reductive Einstein metrics on Lie groups of Type $II_{b}(3)$}\label{TypeII}

Let $G\cong G(i_{o})$ be a compact connected Lie groups of Type $II_{b}(3)$. Then  $G$ is isometric to $\F_4(2)$, $\E_7(3)$, $\E_8(2)$ or $\E_7(5)$.  Let $\fr{g}=T_{e}G$ be the corresponding Lie algebra.  

\subsection{The Ricci tensor}
For a Lie group $G\cong G(i_{o})$ of Type $II_{b}(3)$  consider the  orthogonal decomposition 
\begin{equation}\label{II3}
\fr{g}=\fr{k}_{0}\oplus\fr{k}_{1}\oplus\fr{k}_{2}\oplus\fr{p}_{1}\oplus\fr{p}_{2}\oplus\fr{p}_{3}=\fr{m}_{0}\oplus\fr{m}_{1}\oplus\fr{m}_{2}\oplus\fr{m}_{3}\oplus\fr{m}_{4}\oplus\fr{m}_{5}.
\end{equation}
This is a reductive decomposition of $\fr{g}$ of the form  (\ref{go})  and a left-invariant metric  on $\G\cong G(i_{o})$ is given by
 \begin{eqnarray}
  \langle \ , \  \rangle&=&u_{0}\cdot B|_{\fr{k}_{0}}+u_{1}\cdot B|_{\fr{k}_{1}}+u_{2}\cdot B|_{\fr{k}_{2}}+x_{1}\cdot B|_{\fr{p}_{1}}+x_{2}\cdot B|_{\fr{p}_{2}}+x_{3}\cdot B|_{\fr{p}_{3}}\nonumber\\&=&y_{0}\cdot B|_{\fr{m}_{0}}+y_{1}\cdot B|_{\fr{m}_{1}}+y_{2}\cdot B|_{\fr{m}_{2}}+y_{3}\cdot B|_{\fr{m}_{3}}+y_{4}\cdot B|_{\fr{m}_{4}}+y_{5}\cdot B|_{\fr{m}_{5}}\label{invII3}
 \end{eqnarray}
 for some positive numbers $u_{p}, x_{i}, y_{j}\in\bb{R}_{+}$.  Thus, a $G$-invariant metric on   $M=G/K$ is of the form
 \[
 ( \ , \ )=x_{1}\cdot B|_{\fr{p}_{1}}+x_{2}\cdot B|_{\fr{p}_{2}}+x_{3}\cdot B|_{\fr{p}_{3}}=y_{3}\cdot B|_{\fr{m}_{3}}+y_{4}\cdot B|_{\fr{m}_{4}}+y_{5}\cdot B|_{\fr{m}_{5}}.
 \]
   For a  Lie group $G\cong G(i_{o})$ of Type $II_{b}(3)$ in   Table 3  we state the subalegbras $\fr{k}_{i}$,  the dimensions $d_{i}:=\dim_{\bb{R}}\fr{m}_{i}$ for $i=1, \ldots, 5$  ($d_{0}=1$) and  the vanishing or not of the triple $A_{144}$, which plays an essential role.      {\small{ \begin{center}
{\bf{Table 3.}} The simple ideals $\fr{k}_{i}$, the dimensions $d_{i}$, and the vanishing of $A_{144}$ \end{center} }}
\begin{center}
\begin{tabular}{c|c|c|c|c|c|c|c|c}
 $G(i_{o})$ & $\fr{k}_{1}$, & $\fr{k}_{2}$, & $d_{1}$,  & $d_{2}$, & $d_{3}$, & $d_{4}$, & $d_{5}$ & $A_{144}=0$  \\
 \thickline
 $\F_{4}(2)$ &  $\fr{su}_{2}$ & $\fr{su}_{3}$ & $3$ & $8$ & $24$ & $12$ & $4$ & $\checked$ \\
 $\E_{7}(5)$ & $\fr{su}_{2}$ & $\fr{su}_{6}$ & $3$ & $35$ & $60$ & $30$ & $4$ & $\checked$ \\
 $\E_8(2)$ & $\fr{su}_{2}$ & $\fr{e}_6$ &  $3$ & $78$ & $108$ & $54$ & $4$& $\checked$ \\
 \hline
 $\E_7(3)$ & $\fr{su}_{5}$ & $\fr{su}_{3}$ & $24$ & $8$ & $60$ & $30$ & $10$ & $A_{144}\neq 0$\\
 \thickline
 \end{tabular}
\end{center}
 
 \begin{prop}\label{F42}
  For the reductive decomposition (\ref{II3}) associated to  the compact simple Lie groups $\F_{4}(2)$, $\E_{7}(5)$, $\E_8(2)$ and for the left-invariant metric given by (\ref{invII3}), the non-zero structure constants $A_{ijk}$ $(0\leq i, j, k\leq 5)$ are the following (and their symmetries): $A_{033}$,  $A_{044}$, $A_{055}$, $A_{111}$, $A_{133}$, $A_{155}$,  $A_{222}$, $A_{233}$, $A_{244}$,  $A_{334}$, and  $A_{345}$.  This also holds   for $\E_7(3)$, but in this case one has  in addition $A_{144}\neq 0$. 
 \end{prop}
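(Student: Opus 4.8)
The quantity $A_{ijk}$ is nonzero precisely when the $\fr{m}_{k}$-component of $[\fr{m}_{i},\fr{m}_{j}]$ is nonzero, and by the $\Ad(K)$-invariance together with the $B$-orthogonality of the decomposition this condition is symmetric in $i,j,k$; so the plan is to determine all the brackets $[\fr{m}_{i},\fr{m}_{j}]$ from the root-space structure of $\fr{g}$. The computation has a uniform part --- the $\bb{Z}$-grading attached to $\al_{i_{o}}$, the action of the one-dimensional center $\fr{k}_{0}$, and the splitting of $\fr{k}$ into two simple ideals --- and a root-theoretic part handling the mixed brackets $[\fr{k}_{1},\fr{p}_{j}]$ and $[\fr{k}_{2},\fr{p}_{j}]$; it is in the latter that the four Lie groups behave differently.

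For the uniform part: since $\Hgt(\al_{i_{o}})=3$, the $\al_{i_{o}}$-height gives a $\bb{Z}$-grading $\fr{g}^{\bb{C}}=\fr{g}_{-3}\oplus\cdots\oplus\fr{g}_{3}$ with $\fr{g}_{0}=\fr{k}^{\bb{C}}$, $\fr{p}_{s}^{\bb{C}}=\fr{g}_{s}\oplus\fr{g}_{-s}$ for $s=1,2,3$, and $\fr{g}_{m}=0$ for $|m|\geq 4$. From $[\fr{g}_{s},\fr{g}_{t}]\subset\fr{g}_{s+t}$ one reads off $[\fr{p}_{1},\fr{p}_{1}]\subset\fr{k}\oplus\fr{p}_{2}$, $[\fr{p}_{1},\fr{p}_{2}]\subset\fr{p}_{1}\oplus\fr{p}_{3}$, $[\fr{p}_{1},\fr{p}_{3}]\subset\fr{p}_{2}$, $[\fr{p}_{2},\fr{p}_{2}]\subset\fr{k}$, $[\fr{p}_{2},\fr{p}_{3}]\subset\fr{p}_{1}$, $[\fr{p}_{3},\fr{p}_{3}]\subset\fr{k}$; moreover $\fr{g}_{2}=[\fr{g}_{1},\fr{g}_{1}]$ and $\fr{g}_{3}=[\fr{g}_{1},\fr{g}_{2}]$ (a standard feature of gradings by a single simple root), so these are nonzero and $A_{334}\neq 0$, $A_{345}\neq 0$, while the inclusions above force $A_{335}=A_{344}=A_{355}=A_{445}=A_{455}=A_{555}=0$. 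The center $\fr{m}_{0}=\fr{k}_{0}$ is one-dimensional and acts on $\fr{g}_{s}$ by the scalar $s$ through the grading element, so $[\fr{m}_{0},\fr{m}_{0}]=0$, $[\fr{m}_{0},\fr{k}_{i}]=0$ and $[\fr{m}_{0},\fr{p}_{s}]=\fr{p}_{s}\neq 0$; hence among the triples carrying a $0$ only $A_{033},A_{044},A_{055}$ are nonzero. Finally $\fr{k}_{1},\fr{k}_{2}$ are non-isomorphic simple ideals, so $[\fr{k}_{1},\fr{k}_{1}]=\fr{k}_{1}$, $[\fr{k}_{2},\fr{k}_{2}]=\fr{k}_{2}$, $[\fr{k}_{1},\fr{k}_{2}]=0$, whence $A_{111}\neq 0$, $A_{222}\neq 0$, $A_{11k}=0$ for $k\neq 1$, $A_{22k}=0$ for $k\neq 2$, and $A_{12k}=0$.

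It remains to settle $[\fr{k}_{i},\fr{p}_{j}]$ for $i=1,2$, $j=1,2,3$. Each $\fr{p}_{j}$ is an irreducible $\Ad(K)$-module, hence an outer tensor product of irreducibles of $\fr{k}_{0},\fr{k}_{1},\fr{k}_{2}$, so $[\fr{k}_{i},\fr{p}_{j}]=0$ if and only if $\fr{k}_{i}$ acts trivially on $\fr{p}_{j}$, equivalently every root occurring in $\fr{p}_{j}$ is orthogonal to the root system of $\fr{k}_{i}$. Two subcases are uniform. First, $\al_{i_{o}}\in\fr{p}_{1}$ is joined in $\Gamma(\Pi)$ to a simple root $\gamma_{i}$ of each $\fr{k}_{i}$ (deleting the black vertex disconnects $\fr{k}_{1}$ from $\fr{k}_{2}$), so $\al_{i_{o}}+\gamma_{i}$ is a root lying in $\fr{p}_{1}$ and $[E_{\gamma_{i}},E_{\al_{i_{o}}}]\neq 0$, giving $A_{133}\neq 0$ and $A_{233}\neq 0$. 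Second, $\fr{g}_{3}$ is $\fr{k}^{\bb{C}}$-irreducible with highest-weight vector $E_{\tilde{\al}}$, and $\tilde{\al}$ is orthogonal to every simple root except the node $\beta$ adjacent to $-\tilde{\al}$ in the extended diagram; by our labelling convention $\beta$ lies in $\fr{k}_{1}$ and $\beta\neq\al_{i_{o}}$, so $\fr{k}_{2}$ acts trivially on $\fr{p}_{3}$ whereas $\fr{k}_{1}$ does not ($\tilde{\al}-\beta\in\fr{g}_{3}$), i.e. $A_{255}=0$ and $A_{155}\neq 0$. This leaves only $[\fr{k}_{1},\fr{p}_{2}]$ and $[\fr{k}_{2},\fr{p}_{2}]$.

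These last two brackets are the heart of the statement, and I would decide them from the explicit root systems, using the heights of $\F_{4},\E_{6},\E_{7},\E_{8}$ and the painted Dynkin diagrams of Table~2 --- or, equivalently, by decomposing $\fr{g}_{2}=[\fr{g}_{1},\fr{g}_{1}]$ under $\fr{k}_{0}\times\fr{k}_{1}\times\fr{k}_{2}$. One finds $[\fr{k}_{2},\fr{p}_{2}]\neq 0$ in all four cases, so $A_{244}\neq 0$; and for $[\fr{k}_{1},\fr{p}_{2}]$: when $\fr{k}_{1}\cong\fr{su}_{2}$, i.e. for $\F_{4}(2),\E_{7}(5),\E_{8}(2)$, every root of $\al_{i_{o}}$-height $2$ turns out orthogonal to the single simple root spanning $\fr{k}_{1}$, so $A_{144}=0$; whereas for $\E_{7}(3)$, where $\fr{k}_{1}\cong\fr{su}_{5}$, the root system of $\fr{g}_{2}$ contains a root non-orthogonal to $\{\al_{4},\al_{5},\al_{6},\al_{7}\}$ (equivalently, the $\fr{su}_{5}$-factor of $\fr{g}_{1}$ is a non-self-dual representation, so $[\fr{g}_{1},\fr{g}_{1}]$ cannot be $\fr{su}_{5}$-trivial), whence $A_{144}\neq 0$. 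I expect this dichotomy to be the main obstacle: unlike the grading and adjacency arguments it is sensitive to the precise position of the nodes of $\fr{k}_{1}$ relative to $\al_{i_{o}}$ and $-\tilde{\al}$, and it is exactly the phenomenon recorded in the last column of Table~3.
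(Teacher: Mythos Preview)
Your proof is correct and follows essentially the same approach as the paper's own proof: both rely on the $\mathbb{Z}$-grading induced by the $\alpha_{i_{o}}$-height to handle the $[\fr{p}_{i},\fr{p}_{j}]$ brackets, on the simple-ideal structure of $\fr{k}$ for the purely isotropy triples, and on root-orthogonality arguments for the mixed brackets $[\fr{k}_{i},\fr{p}_{j}]$. Your organisation is slightly more systematic --- you invoke the grading explicitly and use irreducibility of $\fr{g}_{3}$ to pass from $[\fr{k}_{2},E_{\tilde{\alpha}}]=0$ to $A_{255}=0$, whereas the paper states the bracket inclusions by citation and leaves that implication implicit --- but the substance is the same.

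The one place where the paper is more explicit than you is the vanishing of $A_{144}$ for $\F_{4}(2),\E_{7}(5),\E_{8}(2)$: the paper observes that the unique simple root $\phi\in\Pi_{K_{1}}$ is adjacent only to $\alpha_{i_{o}}$ (and to $-\tilde{\alpha}$) and appears with coefficient $1$ in every $\alpha\in R_{2}^{+}$, whence $2(\alpha,\phi)/(\phi,\phi)=2+2\cdot 2(\alpha_{i_{o}},\phi)/(\phi,\phi)=2-2=0$ uniformly. You assert this orthogonality as a case-check; the paper's computation shows it is in fact a one-line consequence of the Dynkin adjacency, so your ``main obstacle'' is in fact no harder than the rest. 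Your representation-theoretic aside for $\E_{7}(3)$ (non-self-duality of the $\fr{su}_{5}$-factor of $\fr{g}_{1}$) is a nice heuristic but not a proof as stated; the paper instead exhibits an explicit pair $\alpha\in R_{2}^{+}$, $\phi\in R_{K_{1}}^{+}$ with $\alpha+\phi\in R_{2}^{+}$, which is what you would also do in practice.
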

 
  \begin{proof}
  Similarly with Proposition  \ref{G22}, the triples $A_{334}, A_{345}$ are non-zero  because $M=G/K$ is such that $b_{2}(M)=1$ with  $\fr{p}=\fr{p}_{1}\oplus\fr{p}_{2}\oplus\fr{p}_{3}$.  From the other cases we describe these which are less obvious.     
  
   Consider the decomposition (\ref{II3}) of the Lie algebra   $\fr{g}$ and  let   $\fr{t}\subset\fr{g}$ be  a maximal abelian subalgebra. Let $\fr{g}^{\bb{C}}=\fr{t}^{\bb{C}}\oplus\sum_{\al\in R}\fr{g}^{\al}$ the root space decomposition of  the complexification $\fr{g}^{\bb{C}}:=\fr{g}\otimes\bb{C}=\fr{k}^{\bb{C}}\oplus\fr{p}_{1}^{\bb{C}}\oplus\fr{p}_{2}^{\bb{C}}\oplus\fr{p}_{3}^{\bb{C}}$  with respect to the Cartan subalgebra (CSA) $\fr{t}^{\bb{C}}\subset\fr{g}^{\bb{C}}$.   Next we shall identify roots  $\al\in R$ with   vectors   $H_{\al}\in\sqrt{-1}\fr{t}$,  defined by $\al(H)=B(H_{\al}, H),  \ \forall H\in\fr{t}^{\bb{C}}$.   Choose a  Weyl  basis   of  root vectors $\{E_{\al}\in\fr{g}^{\al} : \al\in  R\}$ and let $\Pi=\{\al_1, \ldots, \al_\ell\}$ $(\ell=\dim\fr{t})$ be the  fixed  fundamental basis   and $R^{+}$ the associated positive roots. Then, there exists a subset $\Pi_{K}\subset\Pi$  such that $R_{K}=R\cap\langle\Pi_{K}\rangle$  be the root system of the (semi-simple part) of the reductive Lie algebra  $\fr{k}^{\bb{C}}=Z(\fr{k}^{\bb{C}})\oplus \fr{k}^{\bb{C}}_{ss}$, where $\langle\Pi_{K}\rangle$  is  the subspace of $\sqrt{-1}\fr{t}$ generated by $\Pi_{K}$ with integer coefficients. Similarly, we write $R_{K}^{+}:=R^{+}\cap\langle\Pi_{K}\rangle$ for the corresponding positive roots.   Due to reductive decomposition (\ref{II3}),  $R_{K}$ splits into two root  subsystems, say $R_{K_{1}}, R_{K_{2}}$, which can be identified with the root systems  of  $\fr{k}_{i}^{\bb{C}}=\fr{k}_{i}\otimes\bb{C}$ $(i=1, 2)$. Hence we get $R_{K}=R_{K_{1}}\sqcup R_{K_{2}}$, $\Pi_{K}=\Pi_{K_{1}}\sqcup\Pi_{K_{2}}$, e.t.c., and  we decompose $\fr{k}^{\bb{C}}_{ss}$ as follows:
  \[
    \fr{k}_{ss}^{\bb{C}}:=[\fr{k}^{\bb{C}}, \fr{k}^{\bb{C}}]=\sum_{\al\in\Pi_{K_{1}}}\bb{C}H_{\al}\oplus\sum_{\be\in\Pi_{K_{2}}}\bb{C}H_{\be}\oplus\sum_{\al\in R_{K_{1}}}\fr{g}^{\al}\oplus\sum_{\be\in R_{K_{2}}}\fr{g}^{\be}=\fr{t}'\oplus \sum_{\al\in R_{K_{1}}}\fr{g}^{\al}\oplus\sum_{\be\in R_{K_{2}}}\fr{g}^{\be}.  \]
Here $\fr{t}':=\{\sum_{\al\in\Pi_{K_{1}}}\bb{C}H_{\al}\oplus\sum_{\be\in\Pi_{K_{2}}}\bb{C}H_{\be}\big\}\subset\fr{t}^{\bb{C}}$ is a CSA of   $\fr{k}_{ss}^{\bb{C}}$. Set now $\fr{h}:=\{H\in\fr{t} : (H, \Pi_{K})=0\}$; this is a real form of the centre $Z(\fr{k}^{\bb{C}})\cong\fr{k}_{0}^{\bb{C}}$ of  $\fr{k}^{\bb{C}}$.  Since $\fr{h}\subset\fr{t}\subset\fr{k}$, we  write  $\fr{t}=\fr{h}\oplus\fr{h}^{\perp}$ with 
\[
\fr{h}^{\perp}\cong {\rm span}\{\sqrt{-1}H_{\al} : \al\in \Pi_{K_{1}}\}\oplus{\rm span}\{\sqrt{-1}H_{\be} : \be\in \Pi_{K_{2}}\}:=\fr{h}^{\perp}_{1}\oplus\fr{h}^{\perp}_{2}.
\]
Hence the  CSA $\fr{t}'\subset\fr{t}^{\bb{C}}\subset\fr{k}^{\bb{C}}$ is just the complexification of $\fr{h}^{\perp}$, i.e. $\fr{t}'=\fr{h}^{\perp}\oplus\sqrt{-1}\fr{h}^{\perp}$. Then, one has that
 \[
 \dim_{\bb{R}}\fr{h}^{\perp}=\dim_{\bb{C}}\fr{t}'=|\Pi_{K}|=|\Pi_{K_{1}}|+|\Pi_{K_{2}}|=\dim_{\bb{R}}\fr{h}^{\perp}_{1}+\dim_{\bb{R}}\fr{h}^{\perp}_{2}  \quad \Rightarrow\quad \dim_{\bb{R}}\fr{h}=\ell-|\Pi_{K}|=1.
 \]
Thus $\sqrt{-1}\fr{h}\cong \fr{k}_{0}=\fr{u}_{1}$.  For $\F_4(2)$, $\E_7(5)$ and $\E_8(2)$ it is  $\fr{k}_{1}\cong\fr{su}_{2}$ with   corresponding  root system    $R_{K_{1}}=\{\pm \al_1\}$, $R_{K_1}=\{\pm \al_1\}$ and $R_{K_1}=\{\pm\al_6\}$, respectively.    If $G=G(i_{o})$ is $\E_7(3)$, then $\fr{k}_{1}\cong\fr{su}_{5}$ and $R_{K_{1}}$ is more complicated (see below).  In the following table,  for any Lie group $G=G(i_{o})$ of Type $II_{b}(3)$ we summarize  the  data encoded by the decomposition    $\fr{t}=\fr{h}\oplus\fr{h}_{1}^{\perp}\oplus\fr{h}_{2}^{\perp}$.
      \[
   \begin{tabular}{c|c|c|c|c|c|c}
   & $\dim\fr{t}$ & $\Pi_{K_{1}}$ & $\dim\fr{h}_{1}^{\perp}$ & $\Pi_{K_{2}}$ &  $\dim\fr{h}_{2}^{\perp}$ & $\Pi_{M}=\{\al_{i_{o}}\}$ \\
   \thickline
   $\F_4(2)$ & $4$ &  $\{\al_1\}$ & $1$ & $\{\al_3, \al_4\}$ & $2$ & $\al_2$\\
   $\E_7(5)$ & $7$ & $\{\al_6\}$ & $1$ & $\{\al_1, \cdots, \al_4, \al_7\}$ & $5$ & $\al_5$ \\
   $\E_8(2)$ & $8$ & $\{\al_1\}$ & $1$ & $\{\al_3, \cdots, \al_8\}$ & $6$ & $\al_2$\\
     \hline
     $E_7(3)$ & $7$ & $\{\al_4, \cdots, \al_7\}$ & $4$ & $\{\al_1, \al_2\}$ & $2$ & $\al_3$ \\
     \thickline
   \end{tabular}
   \]
     Now, $\fr{t}$ is a common maximal abelian subalgebra of  $\fr{k}\subset\fr{g}$. Hence $\fr{k}=\fr{u}_{1}\oplus\fr{k}_{1}\oplus\fr{k}_{2}=\fr{t}\oplus\sum_{\al\in R_{K}^{+}}\{\bb{R}A_{\al}\oplus\bb{R}B_{\al}\}$, where $A_{\al}:=(E_{\al}+E_{-\al})$ and $B_{\al}:=\sqrt{-1}(E_{\al}-E_{-\al})$.  The simple ideals   $\fr{k}_{1}, \fr{k}_{2}$ can be viewed as 
          \begin{eqnarray*}
 \fr{k}_{1}&=&\sum_{\al\in\Pi_{K_{1}}}\bb{R} H_{\al}\oplus \sum_{\al\in R_{K_{1}}^{+}}\{\bb{R}A_{\al}\oplus\bb{R}B_{\al}\}= \fr{h}_{1}^{\perp}\oplus \sum_{\al\in R_{K_{1}}^{+}}\{\bb{R}A_{\al}\oplus\bb{R}B_{\al}\},\quad (\ast)\\
 \fr{k}_{2}&=&\sum_{\be\in\Pi_{K_{2}}}\bb{R}H_{\be}\oplus \sum_{\be\in R_{K_{2}}^{+}}\{\bb{R}A_{\be}\oplus\bb{R}B_{\be}\}=\fr{h}_{2}^{\perp}\oplus \sum_{\be\in R_{K_{2}}^{+}}\{\bb{R}A_{\be}\oplus\bb{R}B_{\be}\}. \quad \ (\ast\ast)
  \end{eqnarray*}
        Let $R_{M}^{+}:=R^{+}\backslash (R_{K_{1}}^{+}\sqcup R_{K_{2}}^{+})$ be the complementary roots of $M=G/K$.   Because $\Hgt(\al_{i_{o}})=3$, 
 this set splits into 3 subsets $R_{M}^{+}=R_{1}^{+}\sqcup R^{+}_{2}\sqcup R^{+}_{3}$, given by 
 $
 R^{+}_{t}:=\{ \al=\sum_{j=1}^{4}c_{j}\al_{j} \in R_{M}^{+}  :  \ c_{i_{o}} =t, \ 1\leq t\leq 3\}.
$
 Then
 \begin{equation}\label{3p}
  \fr{m}_{3}\cong\fr{p}_{1}=\sum_{\al\in R^{+}_{1}}\{\bb{R}A_{\al}\oplus\bb{R}B_{\al}\}, \quad    \fr{m}_{4}\cong\fr{p}_{2}=\sum_{\al\in R^{+}_{2}}\{\bb{R}A_{\al}\oplus\bb{R}B_{\al}\}, \quad 
   \fr{m}_{5}\cong\fr{p}_{3}=\sum_{\al\in R^{+}_{3}}\{\bb{R}A_{\al}\oplus\bb{R}B_{\al}\}.
 \end{equation}
  \noindent{\bf $1^{\rm st}$ way.}  Fix   one of the Lie groups  $\F_4(2)$, $\E_7(5)$ and $\E_8(2)$.  A method to prove that $A_{144}=0$ but  $A_{133}\neq 0$, $A_{155}\neq 0$, is via the inclusions $[\fr{k}_{1}, \fr{m}_{3}]\subset\fr{m}_{3}$, $[\fr{k}_{1}, \fr{m}_{5}]\subset\fr{m}_{5}$ and $[\fr{k}_{1}, \fr{m}_{4}]=0$.  These can be obtained directly by considering root vectors associated to roots of $R_{K_{1}}^{+}$ and $R_{M}^{+}$ and combining  $(\ast)$ and (\ref{3p}).   In the same way and by using now $(\ast\ast)$ and (\ref{3p}) we get $[\fr{k}_{2}, \fr{m}_{5}]=0$, which implies that $A_{255}=0$.   Of course, this method applies  also  for $\E_7(3)$; for this Lie group  the inclusion $[\fr{k}_{1}, \fr{m}_{4}]\subset\fr{m}_{4}$ holds, so $A_{144}\neq 0$. 
 
  \noindent{\bf $2^{\rm nd}$ way.}  An alternative way to examine  the behaviour of $A_{144}$ is based on the orthogonality of roots.  Let us denote the  unique simple root  belonging in $\Pi_{K_{1}}$ by $\phi$ (recall that  $|\Pi_{K_{1}}|=1$ for $\F_4(2)$, $\E_7(5)$ and $\E_8(2)$).   Consider some complementary  root  $\al\in R_{2}^{+}$ associated to $\fr{p}_{2}\cong\fr{m}_{4}$.  In terms of simple roots, $\al$ may be expressed  by $\al=\phi+2\al_{i_{o}}+\sum_{k}c_{k}\al_{k}$ with $(i_{o}<k\leq\ell)$, or $\al=\sum_{k}c_{k}\al_{k}+2\al_{i_{o}}+\phi$ with $(1\leq k<i_{o})$.   In full details:
   \[
   \F_4(2) : \al=\phi+2\al_2+\sum_{k=3}^{4}c_{4}\al_{k}, \quad \E_7(5) : \al=\sum_{k=1}^{4}c_{k}\al_k+c_7\al_7+2\al_5+\phi, \quad \E_8(2) : \al=\phi+2\al_2+\sum_{k=3}^{8}c_{k}\al_{k}.
   \]
  The fact that the simple root $\phi\in\Pi_{K}\equiv R_{K}^{+}$  appears in the expression of $\al\in R^{+}_{+}$   always with coefficient 1, can be straightforward checked by the expressions of positive roots in terms of simple roots.
 Since $\phi$  is  connected to $-\tilde{\al}$ it is $(\phi, \tilde{\al})\neq0$  (in general $(\al_i, \tilde{\al})\geq 0$).  By assuming now that $\al=\phi+2\al_{i_{o}}+\sum_{k}c_{k}\al_{k}$ (the other case is treated similarly) we get that
      \begin{eqnarray*}
   \frac{2(\phi+2\al_{i_{o}}+\sum_{k}c_{k}\al_{k}, \phi)}{(\phi, \phi)}&=&\frac{2(\phi+2\al_{i_{o}}, \phi)}{(\phi, \phi)}= \frac{2(\phi, \phi)}{(\phi, \phi)}+\frac{2(2\al_{i_{o}}, \phi)}{(\phi, \phi)}=1-1=0,      \end{eqnarray*}
since   $2(\al_{i_{o}}, \phi)=-(\phi, \phi)$.   This shows that the orthogonality of the roots $\al$ and $\phi$, and since $\phi$ spans $R_{K_{1}}^{+}$, by the definition of   $A_{ijk}$ we conclude that $A_{144}=0$. 
   Let us illustrate the computations shortly for  $\F_4(2)$. Let $\Pi=\{\al_=e_2-e_3, \al_2=e_3-e_4, \al_3=e_4, \al_4=\frac{1}{2}(e_1-e_2-e_3-e_4)\}$ be the fixed fundamental basis, with $\tilde{\al}=2\al_1+3\al_2+4\al_3+2\al_4$.    Then, the Cartan matrix is given by 
   \[
   A =  \left( \begin{tabular}{cccc}
 2  & -1 &  0 &  0  \\
-1  &  2 & -2 &  0  \\
 0  & -1 &  2 & -1 \\
 0  &  0 & -1 &  2 
 \end {tabular}
\right)  \ \Rightarrow \    \frac{2(\al_1+2\al_{2}+c_{3}\al_{3}+c_{4}\al_{4}, \al_1)}{(\al_1 \al_1)}= \frac{2(\al_1, \al_1)}{(\al_1, \al_1)}+\frac{2(2\al_{2}, \al_1)}{(\al_1, \al_1)}=1-1=0.        
\]
   We explain now why $A_{144}\neq 0$ for $\E_7(3)$. Recall that    $\Pi_{K_{1}}=\{\al_4, \cdots, \al_7\}$, with result   $R_{K_{1}}\cong R_{\SU_{5}}$ and
  $  R_{K_{1}}^{+}=\Pi_{K}\sqcup \{\al_4+\al_5, \ \al_4+\al_5+\al_6, \ \al_4+\al_7, \ \al_5+\al_6, \ \al_4+\al_5+\al_7, \al_4+\al_5+\al_6+\al_7\}$.  For convenience, we present   the set $R^{+}_{2}$ in terms of simple roots (we state only the coefficients) 
  \[
  R_{2}^{+}=\left\{\begin{tabular}{c|c|c|c|c}
  $(0, 1, 2, 2, 1, 0, 1)$, & $(0, 1, 2, 2, 1, 1, 1)$, & $(0, 1, 2, 2, 2, 1, 1)$, &   $(0, 1, 2, 3, 2, 1, 1)$, & $(1, 1, 2, 2, 1, 0, 1)$, \\
$(1, 1, 2, 2, 1, 1, 1)$, & $(1, 1, 2, 2, 2, 1, 1)$, & $(1, 1, 2, 3, 2, 1, 1)$, & $(1, 2, 2, 2, 1, 0, 1)$,  & $(1, 2, 2, 2, 1, 1, 1)$, \\
$(1, 2, 2, 2, 2, 1, 1)$, & $(1, 2, 2, 3, 2, 1, 1)$,  & $(0, 1, 2, 3, 2, 1, 2)$, & $(1, 1, 2, 3, 2, 1, 2)$, &  $(1, 2, 2, 3, 2, 1, 2)$
\end{tabular}
\right\}.
 \]
 Choose for example $\al:=\al_1+\al_2+2\al_3+2\al_4+2\al_5+\al_6+\al_7\in R_{2}^{+}$ and $\phi:=\al_4+\al_7\in R_{K_{1}}^{+}$. Then,  $[E_{\al}, E_{\phi}]=N_{\al\phi}E_{\al+\phi}\neq 0$, since $\al+\phi=\al_1+\al_2+2\al_3+3\al_4+2\al_5+\al_6+2\al_7\in R^{+}_{2}$, i.e. $[E_{\al}, E_{\phi}]\in\fr{m}_{4}$ and hence $A_{144}\neq 0$.  This can   be verified  by the orthogonality of roots as well;  since the fixed basis of  simple roots of  $\E_7$   is such that $(\al_{i}, \al_{i})=2$ and $(\al_{1}, \al_{2})=(\al_2, \al_3)=(\al_3,  \al_4)=(\al_4, \al_5)=(
\al_4, \al_7)=(\al_5, \al_6)=-1$, it follows that   
 \[
 \displaystyle \frac{2(\al, \phi)}{(\phi, \phi)}=-3/2\neq 0.
 \]
   We finish the proof with a short remark about   $A_{225}$. This case can be treated by similar methods as above, however  it  occurs in an faster way via  the painted Dynkin diagram associated to a Lie group $G=G(i_{o})$ of Type $II_{b}(3)$.  For such a group and the reductive decomposition of its Lie algebra $\fr{g}$ given by (\ref{II3}), observe  that  the Dynkin diagram of  $\fr{k}_{2}$ is not connected with   $-\tilde{\al}$.  Hence $[\fr{k}_{2}, E_{\tilde{\al}}]=0$, and  since $\tilde{\al}\in R_{3}^{+}$, i.e. $E_{\tilde{\al}}\in\fr{p}_{3}\cong\fr{m}_{5}$,  it follows that $A_{225}=0$.
    \end{proof}
 Now, an easy application of Lemma \ref{ric} gives that
   \begin{corol}\label{ricg3}
On $(\E_7(3), \langle \ , \ \rangle)$, the components ${r}_{i}$ of  the Ricci tensor $\Ric_{\langle \ , \ \rangle}$ associated to the left-invariant metric $\langle \ , \ \rangle$  given by (\ref{invII3}),  are  described as follows
\begin{equation*}
\left\{\begin{array}{ll} 
r_0 &=  \displaystyle{\frac{u_{0}}{4d_0}\biggl(\frac{A_{033}}{{x_{1}}^{2}}+\frac{A_{044}}{{x_{2}}^{2}}+\frac{A_{055}}{{x_{3}}^{2}}
\biggr), }\quad 
r_1  =  \displaystyle{\frac{A_{111}}{4d_{1}}\cdot\frac{1}{u_{1}} + 
\frac{u_{1}}{4d_1} \biggl( \frac{A_{133}}{{x_{1}}^{2}} +\frac{A_{144}}{{x_{2}}^{2}} +\frac{A_{155}}{{x_{3}}^{2}}\biggr),}
  \\ & \\
  r_2 &=  \displaystyle{\frac{A_{222}}{4d_{2}}\cdot\frac{1}{u_{2}} + 
\frac{u_{2}}{4d_2} \biggl( \frac{A_{233}}{{x_{1}}^{2}} +\frac{A_{244}}{{x_{2}}^{2}}\biggr),}
 \\ & \\
r_3 &=  \displaystyle{\frac{1}{2 x_1} -
\frac{1}{2d_3}\cdot\frac{1}{{x_{1}}^{2}} 
\biggl(u_{0}\cdot A_{033}+u_{1}\cdot A_{133}+u_{2}\cdot A_{233}+x_{2}\cdot A_{334} \biggr) 
 +  \frac{A_{345}}{2d_3}  
\biggl(\frac{x_1}{x_{2} x_{3}} - \frac{x_{2}}{x_1 x_{3}}- \frac{x_{3}}{x_1 x_{2}}
\biggr),  }
\\ & \\
r_4 & = \displaystyle{\frac{1}{2 x_{2}}   -\frac{1}{2d_{4}{x_{2}}^{2}}\bigg(u_{0}\cdot A_{044}+u_{1}\cdot A_{144} +u_{2}\cdot A_{244}\biggr)
+\frac{A_{334}}{4d_{4}}\biggl( \frac{x_{2}}{{x_{1}}^{2}} - \frac{2}{x_{2}}\biggr)
  +\frac{A_{345}}{2d_{4}}
\biggl(\frac{x_{2}}{x_1 x_{3}} - \frac{x_1}{x_{2} x_{3}}- \frac{x_{3}}{x_1 x_{2}}
\biggr), }
\\ & \\
r_5 & = \displaystyle{\frac{1}{2 x_{3}} -  
\frac{1}{2d_5} \cdot\frac{1}{{x_{3}}^{2}}\biggl(u_{0}\cdot A_{055}+u_{1}\cdot A_{155}\biggr)+\frac{A_{345}}{2d_{5}}
\biggl(\frac{x_{3}}{x_1 x_{2}} - \frac{x_1}{x_{3} x_{2}}- \frac{x_{2}}{x_{3} x_1}
\biggr). } 
\end{array}
\right.
\end{equation*}
The corresponding Ricci components  of $\F_4(2)$, $\E_{7}(5)$ and $\E_8(2)$  occur by the same  expressions, by setting however $A_{144}=0$.
\end{corol}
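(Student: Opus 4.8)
The plan is to obtain the claim by a direct substitution of Proposition \ref{F42} into the general Ricci formula of Lemma \ref{ric}, so the work is essentially careful bookkeeping. First I would recall the identifications built into (\ref{II3}) and (\ref{invII3}), namely $\fr{m}_0=\fr{k}_0$, $\fr{m}_1=\fr{k}_1$, $\fr{m}_2=\fr{k}_2$, $\fr{m}_3=\fr{p}_1$, $\fr{m}_4=\fr{p}_2$, $\fr{m}_5=\fr{p}_3$, together with $(y_0,y_1,y_2,y_3,y_4,y_5)=(u_0,u_1,u_2,x_1,x_2,x_3)$ and $d_0=1$. By Proposition \ref{F42} the nonzero triples $A_{ijk}$ (up to their total symmetry) are exactly
\[
A_{033},\ A_{044},\ A_{055},\ A_{111},\ A_{133},\ A_{144},\ A_{155},\ A_{222},\ A_{233},\ A_{244},\ A_{334},\ A_{345},
\]
the triple $A_{144}$ being nonzero only for $\E_7(3)$ and equal to $0$ for $\F_4(2)$, $\E_7(5)$, $\E_8(2)$. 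In particular every index $k\in\{0,\dots,5\}$ occurs in only a handful of these triples, so each double sum in Lemma \ref{ric} reduces to a short finite sum that can be enumerated by hand, the only subtlety being to keep a factor $2$ whenever the two ``free'' slots of a triple $\{k,i,j\}$ carry distinct values.

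Next I would run the substitution for $k=0,1,\dots,5$. For the fiber directions $k\in\{0,1,2\}$ (the summands of $\fr{k}=\fr{k}_0\oplus\fr{k}_1\oplus\fr{k}_2$) every triple containing $k$ is of the form $\{k,k,k\}$ or $\{k,a,a\}$ with $a\neq k$, because $[\fr{k}_i,\fr{k}_j]=0$ for $i\neq j$, $\fr{k}_0$ is central of dimension $1$, and $[\fr{k}_i,\fr{p}_j]\subset\fr{p}_j$; hence the last (subtracted) double sum in Lemma \ref{ric} equals $\frac{1}{y_k}\sum_{i,j}A_{kij}=\frac{d_k}{y_k}$, which exactly cancels the leading term $\frac{1}{2y_k}$ by the identity $\sum_{i,j}A_{kij}=d_k$, and one is left with $r_k=\frac{1}{4d_k}\sum_{i,j}\frac{y_k}{y_iy_j}A_{kij}$. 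Evaluating this sum reproduces the displayed $r_0,r_1,r_2$, the diagonal triples producing the terms $\frac{A_{111}}{4d_1u_1}$ and $\frac{A_{222}}{4d_2u_2}$ while $A_{000}=0$ leaves no constant term in $r_0$. For the base directions $k\in\{3,4,5\}$ the same enumeration keeps the term $\frac{1}{2x_i}$, places the contributions of $A_{0kk},A_{1kk},A_{2kk}$ (and, for $k=3$, of $A_{334}$) into a term $-\frac{1}{2d_k}\frac{1}{x_i^2}(\cdots)$, produces for $k=4$ the extra piece $\frac{A_{334}}{4d_4}\bigl(\frac{x_2}{x_1^2}-\frac{2}{x_2}\bigr)$, and places $A_{345}$ into the symmetric expression $\frac{A_{345}}{2d_k}\bigl(\frac{x_i}{x_jx_l}-\frac{x_j}{x_ix_l}-\frac{x_l}{x_ix_j}\bigr)$. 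This is the same pattern as in Corollary \ref{ricg2} for Type $I_{b}(3)$, the additional fiber ideal $\fr{k}_2$ being responsible for the extra component $r_2$ and for the $u_2A_{2kk}$ contributions inside $r_3$ and $r_4$. Assembling these computations yields the displayed $r_3,r_4,r_5$.

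Finally, since $A_{144}$ enters only through $r_1$ and $r_4$, setting $A_{144}=0$ in the formulas just obtained yields the Ricci components of $\F_4(2)$, $\E_7(5)$ and $\E_8(2)$, which proves the last sentence. The only point that needs attention — and the step most prone to error — is the combinatorics of which ordered pairs $(i,j)$ feed each double sum and with what multiplicity; there is no conceptual obstacle here, and the whole computation is a near-verbatim repetition of the Type $I_{b}(3)$ case treated in Corollary \ref{ricg2}, which also provides a convenient consistency check.
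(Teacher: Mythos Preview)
Your proposal is correct and follows exactly the approach of the paper, which simply states that the corollary is ``an easy application of Lemma \ref{ric}'' in combination with Proposition \ref{F42}. In fact you supply considerably more detail than the paper does, and your observation that for $k\in\{0,1,2\}$ every nonzero triple $A_{kij}$ has $i=j$, so that the subtracted sum collapses to $\tfrac{d_k}{y_k}$ and cancels the leading $\tfrac{1}{2y_k}$, is precisely the mechanism behind the displayed $r_0,r_1,r_2$.
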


 \subsection{The structure constants} We proceed now with the non-zero structure constants. We   prove that
    \begin{lemma}\label{A345}
  For the reductive decomposition (\ref{II3}) and for the left-invariant metric $\langle \ , \ \rangle$ on a Lie group $G=G(i_{o})$ of Type $II_{b}(3)$   the non-zero triples $A_{ijk}$ attain the following values:
\[
\begin{tabular}{c|c|c|c|c|c|c|c|c|c|c|c|c}
& $A_{033}$ & $A_{044}$ & $A_{055}$ & $A_{111}$ & $A_{133}$ & $A_{144}$ & $A_{155}$ & $A_{222}$ & $A_{233}$ & $A_{244}$ & $A_{334}$ & $A_{345}$ \\
\thickline
$\F_4(2)$ & $2/9$ & $4/9$ & $1/3$ & $2/3$ & $2$ & $0$ & $1/3$ & $4/3$ & $40/9$ & $20/9$ & $4$ & $4/3$ \\
$\E_7(5)$ & $5/18$ & $5/9$ & $1/6$ & $1/3$ & $5/2$ & $0$ & $1/6$ & $35/3$ & $140/9$ & $70/9$ & $10$ & $5/3$ \\
$\E_8(2)$ & $3/10$ & $3/5$ & $1/10$ & $1/5$ & $27/10$ & $0$ & $1/10$ & $156/5$ & $156/5$ & $78/5$ & $18$ & $9/5$ \\
\hline
$\E_7(3)$ & $2/9$ & $4/9$ & $1/3$ & $20/3$ & $12$ & $4$ & $4/3$ & $4/3$ & $40/9$ & $20/9$ & $10$ & $10/3$  \\
\thickline
\end{tabular}
\]
  \end{lemma}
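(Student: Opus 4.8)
The plan is to follow the scheme of Lemma~\ref{gg2}, treating $\F_4(2),\E_7(5),\E_8(2)$ (for which $A_{144}=0$ and $\fr{k}_1\cong\fr{su}_2$) in a uniform way and $\E_7(3)$ (for which $A_{144}\neq 0$) separately; by Proposition~\ref{F42} only the twelve triples listed can be non-zero. First I record the two ``Kähler'' triples: $A_{334}$ and $A_{345}$ govern $[\fr{p}_1,\fr{p}_1]$ and $[\fr{p}_1,\fr{p}_2]$ exactly as $A_{223},A_{234}$ did in the $I_b(3)$ case, so they are fixed by the unique Kähler--Einstein metric on $M=G/K$ and formula~(\ref{A234}) applies after the index shift $2\mapsto 3,\ 3\mapsto 4,\ 4\mapsto 5$:
\begin{equation*}
A_{334}=\frac{d_3 d_4+2d_3 d_5-d_4 d_5}{d_3+4d_4+9d_5},\qquad A_{345}=\frac{(d_3+d_4)\,d_5}{d_3+4d_4+9d_5}.
\end{equation*}
Since $d_4=d_3/2$ in all four cases (Table~3), this collapses to $A_{334}=d_3/6$ and $A_{345}=d_3 d_5/\bigl(2(d_3+3d_5)\bigr)$, which gives the tabulated values. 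Next, the ``internal'' triples $A_{111},A_{222}$ come from Remark~\ref{sakidea}: as $K_i\subset G$ there is $c_i>0$ with $B_{K_i}=c_iB_G$ and hence $A_{iii}=c_i\dim\fr{k}_i$. For $\E_7$ (simply laced) this is immediate; for $\F_4$ one must note that $\fr{k}_1$ lies along the long simple root $\al_1$ while $\fr{k}_2=\fr{su}_3$ lies along short roots, so that $c_2$ carries the extra factor $(\text{short})^2/(\text{long})^2=\tfrac12$ and one gets $A_{222}=4/3$, not a naive $16/3$.

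Second, I would assemble the remaining linear system. Lemma~\ref{ric} gives the dimension relations $\sum_{i,j}A_{kij}=d_k$, which read
\begin{align*}
1&=A_{033}+A_{044}+A_{055}, & d_1&=A_{111}+A_{133}+A_{144}+A_{155}, & d_2&=A_{222}+A_{233}+A_{244},\\
d_5&=2(A_{055}+A_{155}+A_{345}), & d_3&=2(A_{033}+A_{133}+A_{233}+A_{334}+A_{345}), & d_4&=2(A_{044}+A_{144}+A_{244}+A_{345})+A_{334}.
\end{align*}
(The ``unpaired'' $A_{334}$ in the last relation, as opposed to the doubled terms, is the contribution of $[\fr{p}_1,\fr{p}_1]\subset\fr{k}\oplus\fr{p}_2$.) As in the $\G_2(2)$ discussion one checks that the bi-invariant Killing metric ($y_0=\dots=y_5=1$), which is Einstein, adds nothing beyond these. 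Together with the known $A_{334},A_{345},A_{111},A_{222}$ and with $A_{144}=0$, these six relations leave a one-parameter freedom among $A_{033},A_{044},A_{055},A_{133},A_{155},A_{233},A_{244}$ for $\F_4(2),\E_7(5),\E_8(2)$; for $\E_7(3)$, with the additional unknown $A_{144}$, a two-parameter freedom remains.

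Third, I would remove the residual freedom by auxiliary Riemannian submersions, exactly as in Lemma~\ref{gg2}. From the coefficient structure of the roots, $\fr{h}:=\fr{k}\oplus\fr{p}_2$ and $\fr{u}:=\fr{k}\oplus\fr{p}_3$ are subalgebras (since $[\fr{p}_2,\fr{p}_2]\subset\fr{k}$ and $[\fr{p}_3,\fr{p}_3]\subset\fr{k}$), and $(\fr{g},\fr{h})$ is moreover a symmetric pair with isotropy $\fr{p}_1\oplus\fr{p}_3$: concretely $\fr{h}=\fr{k}\oplus\fr{p}_2$ is the isotropy of the Wolf space of $G$ for the first three groups, i.e.\ $\F_4/(\Sp_3\!\cdot\!\Sp_1)$, $\E_7/(\SO_{12}\!\cdot\!\SU_2)$, $\E_8/(\E_7\!\cdot\!\SU_2)$ with $\fr{k}_1\cong\fr{su}_2$ the quaternionic factor, and is $\fr{su}_8$ for $\E_7(3)$ (the symmetric space $\E_7/\SU_8$), while $\fr{u}=\fr{k}\oplus\fr{p}_3$ yields e.g.\ the isotropy-irreducible fibration $\E_7/(\SU_6\!\cdot\!\SU_3)$ for $\E_7(3)$. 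A left-invariant metric adapted to such a coarser decomposition coincides with $\langle\,,\,\rangle$ once the relevant parameters among $u_0,u_1,u_2,x_1,x_2,x_3$ are identified (for $\fr{h}$: $u_0=u_2=x_2$ and $x_1=x_3$), and then the corresponding Ricci components must obey $r_0=r_2=r_4$ and $r_3=r_5$ identically in the new parameters; comparing with the formulas of Corollary~\ref{ricg3} yields the missing linear relations. Solving --- and using one further such fibration in the case $\E_7(3)$ to pin down $A_{144}$ --- produces precisely the values in the table, which one then verifies are positive and consistent with all the above.

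I expect the bookkeeping in the third step to be the main obstacle: for each of the four groups one must exhibit an intermediate closed subgroup $K\subset H\subset G$ whose Lie algebra is $\fr{k}$ with exactly one $\fr{p}_i$ adjoined and with $G/H$ symmetric (or strongly isotropy irreducible), and then check that the resulting Ricci-coincidence equations are genuinely independent of the dimension relations --- something ultimately settled only by the explicit computation. The secondary difficulty, confined to $\F_4(2)$, is the correct normalization of the ratios $c_1,c_2$ in Remark~\ref{sakidea} in the presence of two root lengths; getting that factor right is precisely what separates $A_{222}=4/3$ from the naive guess.
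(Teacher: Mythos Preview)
Your plan is essentially the paper's: K\"ahler--Einstein for $A_{334},A_{345}$, the dimension relations from Lemma~\ref{ric}, and the two auxiliary fibrations with $\fr{h}=\fr{k}\oplus\fr{p}_2$ (twistor fibration over the Wolf space, resp.\ over $\E_7/\SU_8$) and $\fr{u}=\fr{k}\oplus\fr{p}_3$ (fibration over an isotropy-irreducible space). Two small corrections and one comparison.

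First, your six dimension relations are not independent: their weighted sum reproduces the total-dimension identity, so the residual freedom before the fibrations is actually \emph{two} parameters for $\F_4(2),\E_7(5),\E_8(2)$ and \emph{three} for $\E_7(3)$, not one and two. This is harmless, since the fibration constraints you record ($r_0=r_2=r_4$ together with $r_3=r_5$, each holding identically in the remaining metric parameters) supply several independent equations, not just one.

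Second, the paper uses \emph{both} fibrations for all four groups, extracting $r_0=r_2=r_4$ from the first and $r_0=r_1=r_5$ from the second, and does not explicitly invoke $r_3=r_5$. Your variant is slightly more economical for the first three groups: keeping $u_1$ free in the twistor fibration (which is legitimate because $A_{144}=0$ there), the $u_1$-coefficient in $r_3=r_5$ gives $d_5 A_{133}=d_3 A_{155}$, and together with $r_0=r_2=r_4$ this already determines everything without the second fibration. For $\E_7(3)$, however, $A_{144}\neq 0$ forces $\fr{k}_1\subset\fr{h}_1$, so $u_1$ cannot be varied independently in the twistor fibration and the second fibration over $\E_7/(\SU_6\times\SU_3)$ is genuinely needed --- exactly as you anticipate. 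The paper also computes $A_{111},A_{222}$ from the combined system rather than inserting them via Remark~\ref{sakidea} up front, but this is purely a matter of presentation.
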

  \begin{proof}
We use the Killing metric to obtain a system  of 5 equations $\{r_0-r_1=0, r_1-r_2=0, r_2-r_3=0, r_3-r_4=0, r_4-r_5=0 \}$ depending on 11 (or 12) unknowns, i.e. the  triples appearing in Proposition  \ref{F42}.  Two of them,  namely  the triples  $A_{334}$ and $A_{345}$  can be computed  similarly with   a Lie group $G$ of Type $I_{b}(3)$,    using   the (unique) K\"ahler-Einstein metric $x_1=1, x_{2}=2, x_{3}=3$ that $M=G/K$ admits. This gives   (see \cite{stauros})
\begin{equation}\label{A3456}
 A_{334}=\displaystyle\frac{d_3 d_4 + 2 d_3 d_5 - d_4 d_5}{d_3 + 4 d_4 + 9 d_5}, \qquad A_{345}=\displaystyle\frac{(d_3 + d_4) d_5}{d_3 + 4 d_4 + 9 d_5}.
 \end{equation}
 \noindent{\bf Case of $\F_4(2), \E_7(5), \E_8(2)$.}  Assume that  $G\in\{\F_4(2), \E_7(5), E_8(2)\}$. We will show how one can compute the other triples, i.e., $A_{033}$, $A_{044}$, $A_{055}$, $A_{111}$, $A_{133}$, $A_{144}$, $A_{155}$, $A_{222}$, $A_{233}$, and $A_{244}$, in a global way.  For the construction of more equations, we use first the twistor fibration of our  flag manifold over a   symmetric space.  Set
 \begin{equation}\label{fib1}
 \fr{g}=\fr{h}\oplus\fr{n}, \quad\fr{h}:=\fr{h}_{1}\oplus\fr{h}_{2}, \quad \fr{h}_{1}:=\fr{k}_{0}\oplus\fr{k}_{2}\oplus\fr{p}_{2},  \quad \fr{h}_{2}:=\fr{k}_{1}, \quad \fr{n}:=\fr{p}_{1}\oplus\fr{p}_{3}.
  \end{equation}
 This is a reductive decomposition of $\fr{g}$ with $[\fr{h}, \fr{h}]\subset\fr{h}$, $[\fr{h}, \fr{n}]\subset\fr{n}$ and $[\fr{n}, \fr{n}]\subset\fr{h}$. Since $\fr{k}\subset\fr{h}$ we get a fibration $G/K\to G/H$, where $H\subset G$ is the connected Lie subgroup generated by $\fr{h}$. The base space $B=G/H$ is an  
  irreducible symmetric space and the fiber is a Hermitian symmetric space.
 
   A second  reductive decomposition of $\fr{g}$ is given by
  \begin{equation}\label{fib2}
 \fr{g}=\fr{q}\oplus\fr{r}, \quad \fr{q}:=\fr{q}_{1}\oplus\fr{q}_{2}, \quad \fr{q}_{1}:=\fr{k}_{0}\oplus\fr{k}_{1}\oplus\fr{p}_{3}, \quad \fr{q}_{2}:=\fr{k}_{2}, \quad  \fr{r}:=\fr{p}_{1}\oplus\fr{p}_{2}.
  \end{equation}
  In this case, the pair $(\fr{g}, \fr{q})$ is not symmetric, since  $[\fr{q}, \fr{q}]\subset\fr{q}$, $[\fr{q}, \fr{r}]\subset\fr{r}$ but  $[\fr{r}, \fr{r}]\subset(\fr{q}\oplus\fr{r})=\fr{g}$. It is $\fr{r}=T_{o'}G/Q$ where $Q\subset G$ is the connected Lie subgroup with Lie algebra $\fr{q}$.  The fiber of the induced fibration  is $\bb{C}P^{2}\cong\SU_3/(\U_1\times\SU_2)\cong\SU_3/\U_2$ and the base space $B'=G/Q$ is isotropy irreducible (\cite{Bes}).  Let us summarize the necessary details as follows:
    \[
 \begin{tabular}{l | l | l | l     l |l|l}
 \multicolumn{4}{c}{The twistor fibration $G/K\to G/H$}    & \multicolumn{3}{c}{The fibration $G/K\to G/Q$} \\ 
   \hline\hline
 $G=G(i_{o})$ & $\fr{h}_{1}$ & $\fr{h}_{2}$ & $B=G/H$ & $\fr{q}_{1}$ & $\fr{q}_{2}$ &  $B'=G/Q$ \\
 \thickline
 $\F_4(2)$ & $\fr{sp}_{3}$ & $\fr{su}_{2}$ & $\F_4/(\Sp_3\times\SU_2)$ & $\fr{su}_3$ & $\fr{su}_{3}$ &  $\F_4/(\SU_{3}\times\SU_{3})$ \\
 $\E_7(5)$ & $\fr{so}_{12}$ & $\fr{su}_{2}$ & $\E_7/(\SO_{12}\times\SU_2)$ &  $\fr{su}_3$ & $\fr{su}_{6}$ &   $\E_7/(\SU_{3}\times\SU_{6})$\\
 $\E_8(3)$ & $\fr{e}_{7}$ &  $\fr{su}_{2}$ & $\E_8/(\E_7\times\SU_2)$   & $\fr{su}_{3}$ & $\fr{e}_{6}$ &   $\E_8/(\SU_{3}\times\E_{6})$  \\
 \thickline
   \end{tabular}
  \]
  The reductive  decompositions (\ref{fib1}) and (\ref{fib2}) induce  left-invariant metrics on $G$, given by
 \[  
  \langle\langle \ , \ \rangle\rangle=w_1\cdot B|_{\fr{h}_{1}}+w_{2}\cdot B|_{\fr{h}_{2}}+w_{3}\cdot B|_{\fr{n}}, \quad w_1, w_2, w_3\in\bb{R}_{+},\quad 
  \langle\langle \ , \ \rangle\rangle'=z_1\cdot B|_{\fr{l}_{1}}+z_{2}\cdot B|_{\fr{l}_{2}}+z_{3}\cdot B|_{\fr{r}}, \quad z_1, z_2, z_3\in\bb{R}_{+},
    \]
   respectively.   For $w_{1}=u_{0}=u_{2}=x_{2}$, $w_2=u_1$ and $w_3=x_1=x_3$,  the metrics $\langle\langle \ , \ \rangle\rangle$ and $\langle \ , \ \rangle$ coincide, and the same holds between  $\langle\langle \ , \ \rangle\rangle'$ and $\langle \ , \ \rangle$ for  $z_{1}=u_{0}=u_{1}=x_{3}$, $z_2=x_1$ and $z_3=x_1=x_2$.  For these values, by comparing the Ricci components we get the relations $r_0=r_2=r_4$ and $r_{0}=r_{1}=r_{5}$. 
The first one $r_0=r_2=r_4$ and after  introducing the values of $A_{334}, A_{345}$ given by (\ref{A3456}),  implies that
 \begin{eqnarray*}
    A_{334} + 2 A_{345} -d_4(A_{033} +A_{055})&=&d_{2}(A_{334}+ 2A_{345}) - 
    d_4A_{233},\\
     -A_{222}- A_{244}+d_{2}A_{044} &=& 2(A_{044}  + 
    A_{244}  +   A_{334}   + 2 A_{345})  + d_{4}(A_{044}-2),\\
    2(A_{044} + A_{244}  +  A_{334}   + 2 A_{345}) +d_{4}(A_{044} - 2) &=&2d_{2}(A_{044} +A_{244} + A_{334} +2
    A_{345})+ d_4(A_{222}+A_{244}- 2 d_{2}).
       \end{eqnarray*}
 Similarly, by $r_{0}=r_{1}=r_{5}$ we use the relations
   \begin{eqnarray*}
   A_{133} - d_{1}(A_{033}+ 
   A_{044})&=& -d5 (d_{3}(A_{133} - 2 d_{1}) + d_{4}(4 A_{133}  - 2 d1) + 
    9 d_{5}A_{133}),\\
     A_{111} + A_{155} - A_{055}&=&2d_{3}(A_{055} + 
    A_{155})+ 8d_{4}(A_{055}+A_{155}) + 18d_{5}(A_{055} + 
   A_{155}) \\
   && + d_{5}\big(d_{3}A_{055}+2d3+ 4d_{4}(A_{055}-1)
   + 9d_{5}(A_{055} -2)\big).
   \end{eqnarray*}
   After combining now  this data with the system defined by the Killing metric, we obtain   all $A_{ijk}$  in terms of the dimensions $d_{i}, i=1, \ldots, 5$ (recall that $d_{0}=1$); then one can  complete the proof based on Table 3.  
    
   \noindent{\bf Case of $\E_7(3)$.}  In this case, the first reductive decomposition   is again the twistor fibration associated to the flag manifold $\E_7/(\U_1\times\SU_5\times\SU_3)$.   This is almost similar with (\ref{fib1}), i.e. we set
   \[
  \fr{g}=\fr{h}\oplus\fr{n}, \quad\fr{h}:=\fr{k}\oplus\fr{p}_{2}\cong \fr{k}\oplus\fr{m}_{4}, \quad \fr{n}:=\fr{p}_{1}\oplus\fr{p}_{3}\cong\fr{m}_{3}\oplus\fr{m}_{5}.
   \]
   It follows  that $\fr{h}\cong\fr{su}_{8}\subset\fr{g}$ and since $[\fr{n}, \fr{n}]\subset\fr{h}$   the base space of the fibration $G/K\to G/H$ is  an irreducible symmetric space, namely $G/H\cong \E_7/\SU_8$. For a  second reductive decomposition   we use  (\ref{fib2});   it is $\fr{q}=\fr{q}_{1}\oplus\fr{q}_{2}$ with $\fr{q}_{1}\cong\fr{su}_{6}$ and $\fr{q}_{2}=\fr{su}_{3}$. Thus we obtain the fibration 
   \[
   \bb{C}P^{5}=\SU_6/\U_5\to \E_7/(\U_1\times\SU_5\times\SU_3)\to\E_7/(\SU_6\times\SU_3),\]
    where  the base space is  isotropy irreducible (\cite{Bes}).  Considering     new left-invariant metrics associated to these decompositions and   following a similar procedure like before, we obtain the desired results.
       \end{proof}
\begin{remark}
\textnormal{Let us   verify the values of $A_{111}, A_{222}$ via Remark \ref{sakidea}. For $\F_4(2)$  recall that    $B_{\F_{4}}(\al_1, \al_1)=B_{\F_{4}}(\al_2, \al_2)=2B_{\F_{4}}(\al_3, \al_3)=2B_{\F_{4}}(\al_4, \al_4)$.  Because  $\fr{k}_{1}=\fr{su}_{2}$ is generated by $\al_1$,   we see that $A_{111}=c\cdot \dim\fr{su}_{2}$ where $c=B_{\SU_{2}}/B_{\F_{4}}=4/18$. Thus $A_{111}=2/3$. For the triple $A_{222}$, the simple ideal $\fr{k}_{2}=\fr{su}_{3}$ is generated by the short   roots. Hence $A_{222}=\frac{1}{2}\big(c'\cdot \dim\fr{su}_{3}\big)$ where $c'=B_{\SU_{3}}/B_{\F_{4}}=6/18$. This shows that $A_{222}=4/3$. Similar are treated the other Lie groups. For example, for $\E_7(3)$ we get $A_{111}=(B_{\SU_{5}}/B_{\E_{7}})\cdot\dim\fr{su}_{5}=(10/36)\cdot 24=2/3$ and $A_{222}=(B_{\SU_{3}}/B_{\E_{7}})\cdot\dim\fr{su}_{3}=(6/36)\cdot 8=4/3$.  Although  one is possible to  use these values in  the proof of Lemma \ref{A345},  we remark that  the    reductive decompositions (\ref{fib1}) and (\ref{fib2})  described above, are both  necessary.}
\end{remark}

\subsection{Naturally reductive metrics} 
For a Lie group $G\cong G(i_{o})$ of Type $II_{b}(3)$, 
 left-invariant metrics  on $\G\cong G(i_{o})$  which are $\Ad(K)$-inavariant are given by
 \begin{equation}\label{invI35}
 \langle \ , \  \rangle=u_{0}\cdot B|_{\fr{k}_{0}}+u_{1}\cdot B|_{\fr{k}_{1}}+u_{2}\cdot B|_{\fr{k}_{2}}+x_{1}\cdot B|_{\fr{p}_{1}}+x_{2}\cdot B|_{\fr{p}_{2}}+x_{3}\cdot B|_{\fr{p}_{3}}. 
 \end{equation}

\begin{prop}\label{prop5.5}
If a left invariant metric $\langle \ , \ \rangle$ of the form $(\ref{invI35})$  on $G\cong G(i_{o})$ of Type $II_{b}(3)$ is naturally reductive  with respect to $G \times L$ for some closed subgroup $L$ of $G$, 
then one of the following holds: 

$(1)$ for $G =\F_4(2)$, $\E_7(5)$ and $\E_8(2)$, $u_0 =  u_2 = x_2 $, $x_{1} = x_{3}$,   and for  $G =\E_7(3)$,  $u_0 = u_1 = u_2 = x_2 $,   $x_{1} = x_{3}$ \quad 
$(2)$   $u_0  = u_1  = x_3$,   $x_{1} = x_{2}$  
\quad 
$(3)$ $ x_{1} = x_{2} = x_{3} $.

Conversely, 
 if  one of the conditions $(1)$, $(2)$, $(3)$  holds, then the metric 
 $\langle \ , \ \rangle$ of the form  $(\ref{invI35})$ is  naturally reductive  with respect to $G \times L$, for some closed subgroup $L$ of $G$.
  \end{prop}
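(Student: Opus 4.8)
The plan is to transcribe the proof of Proposition \ref{prop4.6}, the extra bookkeeping being caused only by the presence of two simple ideals $\fr{k}_1,\fr{k}_2$ in $\fr{k}$ and by the fact that the base $G/Q$ of the fibration (\ref{fib2}) is merely isotropy irreducible rather than symmetric. Write $\fr{l}$ for the Lie algebra of $L$ and distinguish the cases $\fr{l}\subset\fr{k}$ and $\fr{l}\not\subset\fr{k}$.

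Assume $\fr{l}\not\subset\fr{k}$ and let $\fr{h}$ be the subalgebra generated by $\fr{l}$ and $\fr{k}$; it is $\Ad(K)$-invariant, strictly contains $\fr{k}$, and therefore contains at least one of the irreducible, mutually inequivalent $\Ad(K)$-modules $\fr{p}_1,\fr{p}_2,\fr{p}_3$. As in Proposition \ref{prop4.6}, the relations $A_{334}\neq 0$ and $A_{345}\neq 0$ of Proposition \ref{F42} (valid since $b_2(M)=1$) give $[\fr{p}_1,\fr{p}_1]\cap\fr{p}_2\neq\{0\}$ and $[\fr{p}_1,\fr{p}_2]\cap\fr{p}_3\neq\{0\}$, so that once $\fr{h}$ contains $\fr{p}_1$ it contains $\fr{p}_2$ and then $\fr{p}_3$, whence $\fr{h}=\fr{g}$ and $\langle\,,\,\rangle$ is bi-invariant; in that case conditions $(1)$, $(2)$, $(3)$ all hold. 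If $\fr{h}\supset\fr{p}_2$ then $\fr{h}\supset\fr{k}\oplus\fr{p}_2$, and the key observation is that $\fr{k}\oplus\fr{p}_2$ is exactly the subalgebra $\fr{h}=\fr{h}_1\oplus\fr{h}_2$ of the twistor fibration (\ref{fib1}): for $\F_4(2)$, $\E_7(5)$, $\E_8(2)$ it splits into the two simple ideals $\fr{h}_1=\fr{k}_0\oplus\fr{k}_2\oplus\fr{p}_2$ (namely $\fr{sp}_3$, $\fr{so}_{12}$, $\fr{e}_7$) and $\fr{h}_2=\fr{k}_1$, whereas for $\E_7(3)$ it is the simple algebra $\fr{su}_8$. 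Either $\fr{h}=\fr{k}\oplus\fr{p}_2$, in which case Theorem \ref{ziller} forces $\langle\,,\,\rangle$ to be constant on each simple ideal of $\fr{k}\oplus\fr{p}_2$ and on its $B$-orthocomplement $\fr{p}_1\oplus\fr{p}_3$, which is precisely condition $(1)$ in its two forms; or $\fr{h}$ meets $\fr{p}_1$ or $\fr{p}_3$, and by the chain above $\fr{h}=\fr{g}$ and $\langle\,,\,\rangle$ is bi-invariant. Finally, if $\fr{h}\supset\fr{p}_3$ then $\fr{h}\supset\fr{k}\oplus\fr{p}_3$, which is the subalgebra $\fr{q}=\fr{q}_1\oplus\fr{q}_2$ of (\ref{fib2}) with $\fr{q}_1=\fr{k}_0\oplus\fr{k}_1\oplus\fr{p}_3$ and $\fr{q}_2=\fr{k}_2$; if $\fr{h}=\fr{k}\oplus\fr{p}_3$ then, since $\fr{r}=\fr{p}_1\oplus\fr{p}_2$ is $\Ad(Q)$-irreducible ($G/Q$ being isotropy irreducible), Theorem \ref{ziller} yields $u_0=u_1=x_3$ and $x_1=x_2$, i.e.\ condition $(2)$, and otherwise $\fr{h}=\fr{g}$ and $\langle\,,\,\rangle$ is bi-invariant.

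For $\fr{l}\subset\fr{k}$ one has $\fr{l}^\perp\supset\fr{k}^\perp=\fr{p}_1\oplus\fr{p}_2\oplus\fr{p}_3$, and natural reductivity with respect to $G\times L$ together with Theorem \ref{ziller} forces $\langle\,,\,\rangle$ to be constant on $\fr{l}^\perp$, hence $x_1=x_2=x_3$, which is condition $(3)$. Conversely, conditions $(1)$, $(2)$, $(3)$ are exactly the statements that $\langle\,,\,\rangle$ has the D'Atri--Ziller form of Theorem \ref{ziller} with respect to the subgroup whose Lie algebra is, respectively, $\fr{k}\oplus\fr{p}_2$ (the total space of (\ref{fib1})), $\fr{k}\oplus\fr{p}_3$ (the total space of (\ref{fib2})), and $\fr{k}$: in each case the imposed equalities on the $u_i,x_j$ are precisely those making $\langle\,,\,\rangle$ a scalar on each simple ideal of the subalgebra and a single scalar on its $B$-orthocomplement, so the metric is naturally reductive with respect to the corresponding $G\times L$.

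The only point requiring genuine input beyond the pattern of Proposition \ref{prop4.6} is the explicit description of the intermediate subalgebras $\fr{k}\oplus\fr{p}_2$ and $\fr{k}\oplus\fr{p}_3$ and of their decompositions into centre and simple ideals, since it is this that produces the precise scalar relations in $(1)$ and $(2)$ and, in particular, accounts for the different shape of $(1)$ for $\E_7(3)$ (where $\fr{k}\oplus\fr{p}_2\cong\fr{su}_8$ is simple) compared with $\F_4(2),\E_7(5),\E_8(2)$. This description, however, is already at hand: it is exactly the fibration data (\ref{fib1}), (\ref{fib2}) and the accompanying table used in the proof of Lemma \ref{A345}, so in fact no new computation is needed.
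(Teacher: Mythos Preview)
Your proof is correct and follows essentially the same route as the paper's own argument: the same case split on $\fr{l}\subset\fr{k}$ versus $\fr{l}\not\subset\fr{k}$, the same use of the bracket relations $[\fr{p}_1,\fr{p}_1]\cap\fr{p}_2\neq\{0\}$ and $[\fr{p}_1,\fr{p}_2]\cap\fr{p}_3\neq\{0\}$ to force $\fr{h}=\fr{g}$ once $\fr{p}_1\subset\fr{h}$, and the same identification of the intermediate subalgebras $\fr{k}\oplus\fr{p}_2$ and $\fr{k}\oplus\fr{p}_3$ as the fibration subalgebras of (\ref{fib1}) and (\ref{fib2}). Your write-up is in fact slightly more explicit than the paper's in explaining why condition~$(1)$ takes a different form for $\E_7(3)$ (namely because $\fr{k}\oplus\fr{p}_2\cong\fr{su}_8$ is simple there, whereas it splits off the factor $\fr{k}_1$ in the other three cases), but this is a clarification rather than a different strategy.
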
 
    \begin{proof}
   Let ${\frak l}$ be the Lie algebra of  $L$. Then there are two cases: ${\frak l} \subset {\frak k}$  or ${\frak l} \not\subset {\frak k}$. 
We begin with the second one, i.e.  ${\frak l} \not\subset {\frak k}$. Let ${\frak h}$ be the subalgebra of ${\frak g}$ generated by ${\frak l}$ and ${\frak k}$. 
Since 
$ \fr{g}=\fr{k}_{0}\oplus\fr{k}_{1}\oplus\fr{k}_{2}\oplus\fr{p}_{1}\oplus\fr{p}_{2}\oplus\fr{p}_{3}$ is an irreducible decomposition as $\mbox{Ad}(K)$-modules,  the Lie algebra $\frak h$  needs to contain  at least one of  ${\frak p}_{1}$,  ${\frak p}_{2}$, ${\frak p}_{3}$. 
Assume that $\fr{p}_{1}\subset \frak h$.
 Then,  
$\left[{\frak p}_{1}, {\frak p}_{1}\right] \cap {\frak p}_2 \neq\{0\}$  and hence $\frak h$   contains   ${\frak p}_{2}$.  
It is also  $\left[{\frak p}_{1}, {\frak p}_{2}\right] \cap {\frak p}_3 \neq \{0\}$, thus $\frak h$ contains ${\frak p}_{3}$, as well.  It follows that $\frak h=\frak g $ and  the $\Ad(L)$-invariant metric $\langle \ , \ \rangle$ of the form $(\ref{invI35})$ is bi-invariant. 
Now, if $\frak h$  contains ${\frak p}_{2}$, then $\fr{ h} \supset {\frak k}\oplus {\frak p}_{2}$. If $\fr{ h} =\fr{k}\oplus {\frak p}_{2}$, then $(\fr{ h}, \fr{p}_{1}\oplus\fr{p}_{3})$ is a symmetric pair. Thus, the metric $\langle \ , \ \rangle$ of the form $(\ref{invI35})$ satisfies $u_0 = u_2 = x_2 $,   $x_{1} = x_{3}$ for $G =\F_4(2)$, $\E_7(5)$ and $\E_8(2)$ and   $u_0 = u_1 = u_2 = x_2 $,   $x_{1} = x_{3}$  for $G =\E_7(3)$. If $\fr{ h} \neq\fr{k}\oplus {\frak p}_{2}$, then it must be $\fr{h}\cap {\frak p}_1\neq\{0\}$ or $\fr{ h}\cap {\frak p}_3\neq\{0\}$, so $\fr{ h} \supset {\frak p}_1$ or $\fr{ h} \supset {\frak p}_3$. Thus we also get $\fr{h} = \fr{g}$ and the $\Ad(L)$-invariant metric $\langle \ , \ \rangle$ of the form $(\ref{invI35})$ is again bi-invariant. 
Consider now the case $\fr{p}_{3}\subset\frak h$.   Then, $\fr{h}\supset {\frak k}\oplus {\frak p}_{3}$. If $\fr{h} =\fr{k}\oplus {\frak p}_{3}$, then $\fr{h}$ is a semi-simple Lie algebra and ${\frak p}_{1}\oplus {\frak p}_{2}$ is an irreducible $\Ad(H)$-module. 
Thus, the metric $\langle \ , \ \rangle$ of the form $(\ref{invI35})$ satisfies $u_0 = u_1 = x_3 $,   $x_{1} = x_{2}$. 
If $\fr{ h} \neq\fr{k}\oplus {\frak p}_{3}$, we conclude that  $\fr{h}\cap {\frak p}_1\neq\{0\}$ or $\fr{ h}\cap {\frak p}_2\neq\{0\}$ and thus $\fr{ h} \supset {\frak p}_1$, or $\fr{ h} \supset {\frak p}_2$. Then, we obtain $\fr{h} = \fr{g}$ and the $\Ad(L)$-invariant metric $\langle \ , \  \rangle$ of the form $(\ref{invI35})$ must be bi-invariant. 

Now we consider the case ${\frak l} \subset {\frak k}$.  Since the  orthogonal complement
 ${\frak l}^{\bot}$ of ${\frak l}$ with respect to $B$ contains the  orthogonal complement 
${\frak k}^{\bot}$ of ${\frak k}$, it follows that ${\frak l}^{\bot} \supset {\frak p}_{1} \oplus  {\frak p}_{2}\oplus  {\frak p}_{3}$.   
Since the  invariant metric $\langle \ , \ \rangle$ is naturally reductive  with respect to $G\times L$,  using Theorem \ref{ziller}.   
 we conclude that  $x_{1} = x_{2} = x_{3} $.

Conversely, 
 if the condition $(1)$  holds, then due to  Theorem \ref{ziller}, the metric 
 $\langle \ , \ \rangle$ given by  $(\ref{invI35})$ is  naturally reductive  with respect to $G \times L$, where $\fr{l} = \fr{k}\oplus {\frak p}_{2}$. Similarly, if the condition  $(2)$  holds, then the metric    given by  $(\ref{invI35})$ is  naturally reductive  with respect to $G \times L$, where $\fr{l} = \fr{k}\oplus {\frak p}_{3}$. Finally, if the condition $(3)$  holds, then the metric given by $(\ref{invI35})$ is  naturally reductive  with respect to $G \times K$. 
  \end{proof}

  \subsection{The homogeneous Einstein equation}
Corollary \ref{ricg3} in combination with Lemma \ref{A345} determines now explicitly   the Ricci tensor $\Ric_{\langle \ , \ \rangle}$ of a Lie group $G=G(i_{o})$ of Type $II_{b}(3)$ with respect to  the left-invariant metric $\langle \ , \ \rangle$.  Hence we can write down explicitly  the homogeneous Einstein equation; this is given by
    \[
    \{r_{0}-r_{1}=0, \quad r_{1}-r_{2}=0, \quad r_{2}-r_{3}=0,  \quad r_{3}-r_{4}=0, \quad r_{4}-r_{5}=0\}
    \]
and it turns out to be equivalent to the following system of equations (we normalise the metric by setting $x_1=1$).

\noindent{\bf \underline{Case of $\F_4(2)$}}
\begin{equation}\label{einf42}
\left\{  \begin{tabular}{ll}
  $g_0=$ &$2 {u_0} {u_1} {x_2}^2 {x_3}^2+3 {u_0} {u_1} {x_2}^2+4{u_0} {u_1} {x_3}^2-6 {u_1}^2 {x_2}^2 {x_3}^2-{u_1}^2 {x_2}^2-2 {x_2}^2 {x_3}^2=0$,\\\\
$g_1=$ &$12 {u_1}^2 {u_2}{x_2}^2 {x_3}^2+2 {u_1}^2 {u_2}{x_2}^2-10 {u_1} {u_2}^2 {x_2}^2{x_3}^2-5 {u_1} {u_2}^2 {x_3}^2-3 {u_1} {x_2}^2 {x_3}^2+4 {u_2} {x_2}^2 {x_3}^2= 0$,\\\\
$g_2=$ &${u_0} {u_2} {x_2}^2 {x_3}+9 {u_1} {u_2}{x_2}^2 {x_3}+50 {u_2}^2 {x_2}^2{x_3}+15 {u_2}^2 {x_3}+18 {u_2}{x_2}^3 {x_3}+6 {u_2} {x_2}^3-108{u_2} {x_2}^2 {x_3}$ \\
& $+6 {u_2}{x_2} {x_3}^2-6 {u_2} {x_2}+9 {x_2}^2 {x_3}= 0$, \\\\
$g_3=$ &$-{u_0} {x_2}^2{x_3}+4 {u_0} {x_3}-9 {u_1}{x_2}^2 {x_3}-20 {u_2} {x_2}^2 {x_3}+20 {u_2} {x_3}-36 {x_2}^3 {x_3}-18 {x_2}^3+108 {x_2}^2 {x_3}$ \\
& $+6 {x_2} {x_3}^2-72 {x_2}{x_3}+18 {x_2}= 0$, \\\\
$g_4=$ &$9 {u_0} {x_2}^2-4{u_0} {x_3}^2+9 {u_1} {x_2}^2-20{u_2} {x_3}^2+18 {x_2}^3{x_3}^2+48 {x_2}^3 {x_3}-108{x_2}^2 {x_3}-48 {x_2} {x_3}^3$ \\
& $+72{x_2} {x_3}^2+24 {x_2}{x_3}= 0$.
  \end{tabular} \right.
 \end{equation}
  We consider the polynomial ring $R= {\mathbb Q}[z,  u_0,  u_1,  u_2, x_2,  x_3] $ and an ideal $I$, generated by polynomials $\{g_0, \, g_1, \, g_2, $ $ g_3, \, g_4, z \,  u_0 \, u_1 \,u_2 \, x_2 \, x_3-1 \}$.   We take a lexicographic ordering $>$  with $ z >  u_0 >  u_1>  u_2 > x_2 > x_3$ for a monomial ordering on $R$. Then,   a  Gr\"obner basis for the ideal $I$, contains a  polynomial  of   $x_3$  given by 
$({x_3}-1) (884 {x_3}^3-1816  {x_3}^2+873 {x_3}-117) h_1(x_3)$, where
$h_1(x_3)$ is a polynomial  of degree 101 defined by
{ \small 
 \begin{equation*} 
\begin{array}{l} h_1(x_3)=63459125312728809061842327355883965092426940416 {x_3}^{101} \\ -920983389392901147130377725860474388294846644224 {x_3}^{100}\\+9284030996847000229461748615161458859581745659904 {x_3} ^{99} \\ -72231264824781521074460759465312440396973648904192 {x_3}^{98}\\+474090092783696430917122803302705511377312635944960 {x_3}^{97} \\ -2716426925303009081280894916587505354693138471452672 {x_3}^{96} \\+13928981666688388147256261223934160302674603191304192 {x_3}^{95}  \\ -64892996424133928137698638603811273070013563879292928 {x_3}^{94}\\ +277731488039641471269947873533553174033919791213838336{x_3}^{93}  \\ -1100681274916513923824472334739425330176822351407087616{x_3}^{92}\\ +4063212316482458197610601184936117632052406776092426240{x_3}^{91}  \\ -14035457932119745594854722553431991322029237116190326784{x_3}^{90}\\ +45521154694910054415489734540334666446613275447418421248{x_3}^{89}  \\ -138989344903939502760451623904236457989017703097989857280{x_3}^{88}\\ +400294158959755643083550785378988938431734533883781709824{x_3}^{87}  \\ -1089022114669507770392870584714647593690007896934880641024{x_3}^{86}\\ +2801397856363975282367542926720029558144403125321202401280{x_3}^{85}  \\ -6817885379922156536111652515165617977589266615874129231872{x_3}^{84}\\ +15701952412543110505856916943820752724521223983962586152960{x_3}^{83}  \\ -34216393233291025770029652161256199220193387714745792659456{x_3}^{82}\\ +70516001329117409764548923930872952867194021910385077846016{x_3}^{81}  \\ -137327646289169111738793490244260987895509460573594545225728{x_3}^{80}\\ +252415013953372334056500075226916999570602665659035875999744{x_3}^{79}  \\ -437145843020198931924876541636921539338330946745999868755968{x_3}^{78}\\ +711698478115149919191326763960685722556799467931964286992384{x_3}^{77}  \\ -1085854704561151226820526323366031976152058255866699795660800{x_3}^{76}\\ +1545872987530402035886865292388450940956043862142555058380800{x_3}^{75}  \\ -2040770782009497991746315379021685834308361348137207206174720{x_3}^{74}\\ +2474617817759971607277241788391185316015028171144823683063808{x_3}^{73}  \\ -2713416472372073086525101694784351591143139473401445118681088{x_3}^{72}\\ +2613262983470191771589830160198704851628875439784801973596160{x_3}^{71}  \\ -2069290218418158264029847032239247119549407157373813654286336{x_3}^{70}\\ 
  +1073308692563330553384597653734241243314674928772408635447296{x_3}^{69}\\ +243383268162901260317977997414016371083655623812595679834112{x_3}^{68}  \\ -1609375404546416218236600482252912174118464079287975942948352{x_3}^{67}\\ +2672694963662822735196784395673190550399134349252116960776704{x_3}^{66}  \\ -3114412610690042189775494926465007145115374286782216606602752{x_3}^{65}\\ +2780945528894563748863142343650355721268277028914024232564224{x_3}^{64}  \\ -1777803153232519896953172291143481639817296527070092107145472{x_3}^{63}\\ +471376850935213838891432653962426924940360769359294745810432{x_3}^{62}\\ +623750383609419878697829019463867520660227080289808518313920{x_3}^{61}  \\ -1040603038326989861022950362038120196827146648905065209736576{x_3}^{60}\\  
 +564965083609062720849342591878835952132818055988824262111376{x_3}^{59}\\ +649619372601281147814040345731771930965779727246024293143504{x_3}^{58}  \\ -2118012753182384601180559819198400780972530222436309420548080{x_3}^{57}\\ +3205388926087602866149736984715502422941036840916282804140976{x_3}^{56}  \\ -3399079409763580385139579223928130084953706063347065837023000{x_3}^{55}\\ +2526218612878420387036879439950686812518557153001711837979064{x_3}^{54}  \\ -834433401752454857645647262023568632488618740772182979146772{x_3}^{53}  \\ -1101385395838477492629746207491960938518647539595102389316872{x_3}^{52}\\ +2631037139614283301382941342160092654160835574746224378985565{x_3}^{51}  \\ -3282383226236719186391399204082397795775877578301500715196503{x_3}^{50}\\ +2937659835203687649893255383322025931399696823727244558286097{x_3}^{49}  \\ -1851035734616115365629019524331368234044251593409402913444411{x_3}^{48}\\ +498022109054373150395850625480317180124853810659501884348651{x_3}^{47}\\ 
+627355526093354022331902293368985581146006710909004658401207{x_3}^{46}  \\ -1207045734980737886266791122600422887565869937071216149025129{x_3}^{45}\\ +1182073307241943169778530232935546805223288647965360710571511{x_3}^{44}  \\ -718203246621926363352199146463822746343025839992293567678988{x_3}^{43}\\ +96644002741648960970123374601314631036176300051445193949656{x_3}^{42}\\ +422104069724220785760650059880175853561682549627138288223924{x_3}^{41}  \\ -683397637477216946107242310612442756271548207816068470401088{x_3}^{40}\\ 
\end{array}
    \end{equation*} 
 \begin{equation*} 
\begin{array}{l}
+666605501872783619736009469651934045513775273586944371856186{x_3}^{39}  \\ -452558710835995517519348198954994152927678719275198879680886{x_3}^{38}\\ +162700999764271444384776912405378431410116935139824677075998{x_3}^{37}\\ +93939119293706672636919053860282946725625620077832878556914{x_3}^{36}  \\ -255147629760439396399382896402394067054928161056011639874616{x_3}^{35}\\ +308396407398907558760838372721158434444550638676652132806780{x_3}^{34}   \\ -277614375770280461907766589637738690110973121068605430180588{x_3}^{33}\\ +201477143418161317806510553487993326097416754880917183892864{x_3}^{32}   \\ -115969901716816418101426110321015198051414624540044967024168{x_3}^{31}\\ +44725833423519690527789412859895224361666978693724364777728{x_3}^{30}\\ +2520894187976416901089600003713269486821645786597713677964{x_3}^{29}   \\ -26486403386554915622730996860937772397953223304069070744920{x_3}^{28}\\ +33294891175701376903563757255573808063712434139603578954691{x_3}^{27}   \\ -30202076884961887362828779340403368100144991071060222187421{x_3}^{26}\\ +23099423829744618419506899206593906790694209676039929614503{x_3}^{25}   \\ -15680468836461732445911867905683122953673941068247037316197{x_3}^{24}\\ +9671984604335042749855869480653845526481625108821701237745{x_3}^{23}  \\ -5490069835615675038150115079186954852016192157885614205023{x_3}^{22}\\ +2889374986224761685442552566105705408525169491706393088757{x_3}^{21} \\ -1416494130573647866724791573304034482931984659037915662955{x_3}^{20}\\ +648751477566144492286603717146642412127675425476232549029{x_3}^{19} \\ -278070848833909095692181583970081277475757069716551459039{x_3}^{18}\\ +111644780007072437002497603341623817366814844891773288945{x_3}^{17} \\ -41997637999280362408255038869439084693844361764781722551{x_3}^{16}\\ +14796365048687965062327228402196888338897351487134760371{x_3}^{15} \\ -4877819943619247140179803803393490676071239723633511361{x_3}^{14}\\ +1502397252705966074311686966159213579712203646293106491{x_3}^{13} \\ -431425769917898886316566505773997593134379146416278993{x_3}^{12}\\ +115173309907631982466727249550402897704325044395195191{x_3}^{11} \\ -28477674526413418871108871989347987750566901612750701{x_3}^{10}\\ +6490427377397192429051997305835939685976063245759899{x_3}^{9} \\ -1355038458126617854787910001663192486402952805121845{x_3}^{8}\\ +257038525794588371891326953346269064271241145105863{x_3}^{7} \\ -43821959983623391384006935940560441466837308645513{x_3}^{6}\\ +6615718858884967733806801498204423491247659359667{x_3}^{5} \\ -865800811288635528014727851709253657063913408253{x_3}^{4}\\ +95113645509127999488018808217880353172800724072{x_3}^{3} \\ -8317620756384461283345771094408286533995673140{x_3}^{2}\\ +522260976842192193467930459675557760528309268{x_3}  \\ -17941225061011393318805831702298275346720564. 
    \end{array}
    \end{equation*}
}

 Solving $h_1(x_3)=0$ numerically, we see that there exist five positive solutions, which are given approximately by $x_3 \approx 0.25594917, x_3 \approx 0.49280351, x_3 \approx 1.1060677, x_3 \approx 1.3849054, x_3 \approx 2.4753269$. Moreover,  real solutions of the system   $\{ g_0 =0, g_1=0, g_2=0, g_3=0, g_4=0, h_1(x_3)=0 \}$ with $ u_0 \, u_1 \,u_2 \, x_2 \, x_3 \neq 0$ are of the form
 \begin{equation*} 
 \begin{array}{l}
  \{  u_0 \approx 0.26967359, u_1 \approx 0.21126932, u_2 \approx 0.11447898, x_2 \approx 1.0039269, x_3 \approx 0.25594917 \},
 \\
\{ u_0 \approx 0.54675598, u_1 \approx 0.28246829, u_2 \approx 1.1715986, x_2 \approx 1.022763119985955, x_3 \approx 0.49280351 \}, 
\\
\{  u_0 \approx 0.72660638, u_1 \approx 0.143286728, u_2 \approx 0.11340863, x_2 \approx 0.49841277, x_3 \approx 1.1060677 \}, 
\\
\{ u_0 \approx 1.3613346, u_1 \approx 1.4496103, u_2 \approx 0.15582596, x_2 \approx 0.95415861, x_3 \approx 1.3849054 \},  
\\
\{u_0 \approx 2.4948349, u_1 \approx 0.25221774, u_2 \approx 0.17749081, x_2 \approx 1.7461504, x_3 \approx 2.4753269 \}. 
    \end{array}
    \end{equation*}
Hence, we obtain five Einstein metrics which are non-naturally reductive  by Proposition \ref{prop5.5}. 
By  computing the associated scale invariants  (cf.  \cite{Chry2}) we conclude that  
these five metrics are non-isometric each other.

For $884 {x_3}^3-1816  {x_3}^2+873 {x_3}-117 = 0 $, we see that $ x_2= 1$, $u_0 = u_1 = x_3$ 
and $442 {x_3}^2-739 x_3+65 u_ 2+144=0$ are solutions of the system  $\{ g_0 =0, g_1=0, g_2=0, g_3=0, g_4=0 \}$. 
Hence, in this case solutions are given by 
\begin{equation*} 
 \begin{array}{l}
\{ u_0 =  u_1  = x_3  \approx 0.23910517,\ u_2 \approx 0.11429253,  \ x_2 =1 \}, \\ 
\{ u_0 =  u_1  = x_3  \approx 0.38779156, \ u_2 \approx 1.1709075,  \  x_2 =1 \}, \\ 
\{ u_0 =  u_1  = x_3  \approx 1.4274019, \ u_2 \approx 0.15823885,  \  x_2 =1 \}.
\end{array}
\end{equation*}

For $ x_3= 1$, we see that  $({x_2}-1) (2375 {x_2}^3-4195 {x_2}^2+1960 x_2-272) =0$, $u_0 = u_2= x_2$ and $102 u1-2375 {x_2}^3+6570 {x_2}^2-5305 x_2+1008 =0$. Then we obtain again solutions,  approximately given by 
 \begin{equation*} 
 \begin{array}{l}
\{ u_0  = u_2 = x_2 \approx 0.27971768, \ u_1 \approx 0.13559746,  \ x_3 =1 \}, \\ 
\{  u_0  = u_2 = x_2 \approx 0.36506883, \ u_1 \approx 1.6532011,  \ x_3 =1 \}, \\ 
\{  u_0  = u_2 = x_2 \approx 1.1215293, \ u_1 \approx 0.27621689,  \ x_3 =1 \} 
\end{array}
\end{equation*} 
and $ u_0 = u_1 = x_2 = x_3 = 1$. 
Notice that these solutions define   naturally reductive Einstein metrics, by Proposition \ref{prop5.5}.

\medskip
   
\noindent{\bf \underline{Case of $\E_7(5)$}}
\begin{equation}\label{eine75}
 \left\{ \begin{tabular}{ll}
 $g_0 =$& $5 {u_0} {u_1} {x_2}^2
   {x_3}^2+3 {u_0} {u_1} {x_2}^2+10
   {u_0} {u_1} {x_3}^2-15 {u_1}^2
   {x_2}^2 {x_3}^2-{u_1}^2 {x_2}^2-2
   {x_2}^2 {x_3}^2 =0$,\\\\  
 $g_1 =$& $15 {u_1}^2 {u_2}
   {x_2}^2 {x_3}^2+{u_1}^2 {u_2}
   {x_2}^2-8 {u_1} {u_2}^2 {x_2}^2
   {x_3}^2-4 {u_1} {u_2}^2 {x_3}^2-6
   {u_1} {x_2}^2 {x_3}^2+2 {u_2}
   {x_2}^2 {x_3}^2 =0$, \\\\  
 $g_2 =$&  ${u_0} {u_2}
   {x_2}^2 {x_3}+9 {u_1} {u_2}
   {x_2}^2 {x_3}+104 {u_2}^2 {x_2}^2
   {x_3}+24 {u_2}^2 {x_3}+36 {u_2}
   {x_2}^3 {x_3}+6 {u_2} {x_2}^3$ \\
 & $ -216
   {u_2} {x_2}^2 {x_3}+6 {u_2}
   {x_2} {x_3}^2-6 {u_2} {x_2}+36
   {x_2}^2 {x_3}=0$,\\\\  
 $g_3 =$& $-{u_0} {x_2}^2
   {x_3}+4 {u_0} {x_3}-9 {u_1}
   {x_2}^2 {x_3}-56 {u_2} {x_2}^2
   {x_3}+56 {u_2} {x_3}-72 {x_2}^3
   {x_3}-18 {x_2}^3$ \\
 & $+216 {x_2}^2
   {x_3}+6 {x_2} {x_3}^2-144 {x_2} 
  {x_3}+18 {x_2}= 0$,\\\\
$g_4 =$ & $9 {u_0} {x_2}^2-4 {u_0} {x_3}^2+9 {u_1} {x_2}^2-56
   {u_2} {x_3}^2+36 {x_2}^3
   {x_3}^2+102 {x_2}^3 {x_3}$  \\
 & $ -216 {x_2}^2 {x_3}-102 {x_2}
   {x_3}^3+144 {x_2} {x_3}^2+78
   {x_2} {x_3}=0$. 
   \end{tabular} \right.
 \end{equation}
   We consider the polynomial ring $R= {\mathbb Q}[z,  u_0,  u_1,  u_2, x_2,  x_3] $ and an ideal $I$ generated by polynomials $\{g_0, \, g_1, \, g_2, $ $ g_3, \, g_4, z \,  u_0 \, u_1 \,u_2 \, x_2 \, x_3-1 \}$.   Fix a lexicographic ordering $>$,  with $ z >  u_0 >  u_1>  u_2 > x_2 > x_3$ for a monomial ordering on $R$. Then, a  Gr\"obner basis for the ideal $I$ contains a  polynomial  of   $x_3$  given by 
$({x_3}-1) ( 2332 {x_3}^3-4013 {x_3}^2+1053 x_3-72) h_1(x_3)$, where
$h_1(x_3)$ is a polynomial  of degree 101 given by 
{ \small 
 \begin{equation*} 
\begin{array}{l} 
 h_1(x_3)= 1407625033454098518064986243441658839583396911621093750000
   0000000000
   {x_ 3}^{101}  \\ -171879184796389482519846089317782509140
   670299530029296875000000000000
   {x_ 3}^{100} \\ +145313195810420524500555185389570281512
   7745270729064941406250000000000
   {x_ 3}^{99}  \\ -9473331387374978930557135592382675403647
   411614656448364257812500000000
   {x_ 3}^{98} \\ +5222842903272600890435696339447708175573
   9115923643112182617187500000000
   {x_ 3}^{97}  \\ -2515439510033906451541094833423024544136
   19741813838481903076171875000000
   {x_ 3}^{96} \\ +1085613000952484788032182156567205573264
   792393599331378936767578125000000
   {x_ 3}^{95}  \\ -4258882459359618640576916999859198229608
   694138852670788764953613281250000
   {x_ 3}^{94} \\ +1535338179220109316926144274572859997446
   9277102204543948173522949218750000
   {x_ 3}^{93}  \\ -5124767143797420002927992624219035075094
   1601745823506236076354980468750000
   {x_ 3}^{92} \\ +1592779681935318277337709289386967923945
   19161440710672831535339355468750000
   {x_ 3}^{91}  \\ -4629265083461444987427288702488935923748
   79475272611933228969573974609375000
   {x_ 3}^{90} \\
    +1262177999422453138852307438542980370000
   112170664719322771072387695312500000
   {x_ 3}^{89}  \\ 
   -3236435366700326861977880348547854838802
   610550418218812595033645629882812500
   {x_ 3}^{88} \\ +7818839674582226915745074206106944841667
   485955194252036326932907104492187500
   {x_ 3}^{87}  \\ -1782302647568357498992075672784991499282
   8758644347139452949285984039306640625
   {x_ 3}^{86} \\ +3837456246457382049135240596363338184869
   2969526613948014654363346099853515625
   {x_ 3}^{85} \\
     -7811151515668095220856832281528364097115
   0078626459790696200255393981933593750
   {x_ 3}^{84} \\ +1504182128662493927464301361364771903121
   82131346708641783848342323303222656250
   {x_ 3}^{83}  \\ -2742267977992201360688296538498164470181
   58152571233876100911887187957763671875
   {x_ 3}^{82} \\ +4736664363713521739859761763382573823011
   03274684463812827026091655731201171875
   {x_ 3}^{81}  \\ -7759378481573667569060630844484222791340
   97874371188256677871034072875976562500
   {x_ 3}^{80} \\
   +1207150400502827505609484743337103792488
   617773663813184312931088278198242187500
   {x_ 3}^{79} \\ 
-1786917539761664393734512028432217575971
   928747001709362696207536544036865234375
   {x_ 3}^{78} \\ +2523283328042500705555771937323177208536
   480253814622741933239530885467529296875
   {x_ 3}^{77}  \\ -3410040133101191376307418153187605141034
   005485155297795948203643118286132812500
   {x_ 3}^{76} \\ +4427227829007830039323985782362477540309
   186880924655617991484124562622070312500
   {x_ 3}^{75}    \\
    -5543315902885880050459899218485938422135
   193995167025320528375071687469482421875
   {x_ 3}^{74} \\ +6716145578402019256272307762678559235656
   688619049543258249088075475286865234375
   {x_ 3}^{73}  \\ 
    \end{array}
    \end{equation*} 
 \begin{equation*} 
\begin{array}{l}
-7889325415521622316319117014762959306907
   667871100870016410896831429394531250000
   {x_ 3}^{72} \\ +8987871554017934014648949314880532895050
   225239219525645069037648161762695312500
   {x_ 3}^{71}  \\ -9917208157477374337767877875545519619987
   011928976120543819610153102255859375000
   {x_ 3}^{70} \\
  +1057779920369831962038745559233774102700
   1238708759415658121587619127446289062500
   {x_ 3}^{69}  \\ -1088731978221505090194964448983111262502
   5397307880553428016972363473247070312500
   {x_ 3}^{68} \\ 
   +1080827271867176488040930732463294698725
   0904801571361749421569584932820507812500
   {x_ 3}^{67}  \\ 
  -1035192575061247524162549318187841585103
   6122298454696642632323147793814306640625
   {x_ 3}^{66} \\ 
   +9570049235629322030129441573282410555362
   644512739858159923675452999522548828125
   {x_ 3}^{65}  \\ -8527728410757032303006877739950121077903
   047743081938287293482897667003828125000
   {x_ 3}^{64} \\ +7293671555087344683446136873087134513658
   482869052893430821710819055271882812500
   {x_ 3}^{63}  \\ -5938364201902229269149097008903216497897
   458637908470944612521462398610144531250
   {x_ 3}^{62} \\ +4542803163100520326399144842143300663460
   846346791886417852196110264118186718750
   {x_ 3}^{61}  \\ -3203367797363298249711802033233747850433
   277119218375865387587239202504776562500
   {x_ 3}^{60} \\ +2014757550707134984844606694758564217815
   440179528736200907390078758647693437500
   {x_ 3}^{59}  \\ -1059435189706540868544396172374688428356
   034633803064566575279632528066663046875
   {x_ 3}^{58} \\ +3734537469533921134934775217232581231759
   87786047445920972626980195467362484375
   {x_ 3}^{57} \\ +4453036221051807335321639649274824031301
   3722867507760033999699035992575625000
   {x_ 3}^{56}  \\ -2455513116256111000162106198030114921359
   65095443877459657793044352671972512500
   {x_ 3}^{55} \\ +2897080587113708245118481944852599927613
   49222486398117737364263666816009706250
   {x_ 3}^{54}  \\ -2510215104220945760347042051621298232030
   80952220679044063099876553064746298750
   {x_ 3}^{53} \\ +1769741809807208837022028692194111574894
   56320399570126363647370079444394210000
   {x_ 3}^{52}  \\ -1072934257491105524254817776078959188007
   03150070799670761477099264571385035000
   {x_ 3}^{51} \\ +5266110512997056259402821010863960672870
   1622938738221071821689744328675071625
   {x_ 3}^{50}  \\ -2020997691604380775174503965435885582353
   2177997588147118317671488142414778125
   {x_ 3}^{49} \\ +2385419029486295607532750803216893332123
   501222180408286275540947211555452000
   {x_ 3}^{48} \\ +3587821317546678618380217860464824345029
   642368821605269921381443889756146300
   {x_ 3}^{47}  \\ -5067035401021094823238464500969069646022
   963960363015867883320197337727649300
   {x_ 3}^{46} \\ +3717083826554658487879634379063790156238
   599455962131271814245768935578843000
   {x_ 3}^{45}  \\ -2427908795670998175497265078142249597119
   437614024384136832425990803168150860
   {x_ 3}^{44} \\ +1195740741369749034292959741455193545682
   210186321696447733419655241996493300
   {x_ 3}^{43}  \\ -5479030023364986664649764059184980180008
   43427855146404566963006900577627145
   {x_ 3}^{42} \\ +1685743230277189355797472565385565053992
   57352625388808341195401828470961037
   {x_ 3}^{41}  \\ -3165649270926734393334670727262004850799
   2779716620141360266989747001709088
   {x_ 3}^{40}  \\ -1914146377926607606046469696392226561756
   7912033126227666771741940037097492
   {x_ 3}^{39} \\ +2205780519622561622203822076978400342166
   4886692244997020731817495990602040
   {x_ 3}^{38}  \\ -1611110335681748334222433039874564630170
   5527199879085417642376254012031316
   {x_ 3}^{37} \\ +8787371168033807690276772563251134119884
   622902813540153994393586109768596
   {x_ 3}^{36}  \\ -4203136839615520864011534300079602987537
   250849136320058974663974205829364
   {x_ 3}^{35} \\ +1703272099354575963895402631669026840927
   858600614506299961357771268340029
   {x_ 3}^{34}  \\ -5889708743927881827096363244046561700144
   08669935999543313372542768494777
   {x_ 3}^{33} \\ +1538066099238323268236047694320150349210
   34937840431892763207363982129000
   {x_ 3}^{32}  \\ -1537847071578040584956036616184077903674
   7529936249773865278808437342372
   {x_ 3}^{31}  \\ -1521191948519739107963572266481714158282
   6291606688169501432074860152694
   {x_ 3}^{30} \\ +1460288472642585438759553298310024396907
   2684905051532652335375162018618
   {x_ 3}^{29}  \\ -8742436435221198517785178525746417053733
   167289492701877870544777319580
   {x_ 3}^{28} \\ +4322817710270412971587127477714461997494
   242365934113140183874504735076
   {x_ 3}^{27}  \\ -1894810327329607561294394972277312203013
   169914051239516934551367169545
   {x_ 3}^{26} \\
 +7592072008123351201808041249315530873960
   41536632050200202305980399381
   {x_ 3}^{25}  \\ -2824255116392047878725460531831747703978
   88747384260698976130175319380
   {x_ 3}^{24} \\ +9837787002559908171063760849337228152821
   1731859183399640998528484952
   {x_ 3}^{23}  \\ -3227308456867535524040908781969025865618
   3993127476521438269949061881
   {x_ 3}^{22} \\ +1000189548315273268401949436959539522575
   0217998389286530858888305093
   {x_ 3}^{21}  \\ -2934440317716499911008283722824705734798
   918191533548524562464809626
   {x_ 3}^{20} \\ +8158864970788104006944133430830081360019
   65170879611485375752520798
   {x_ 3}^{19}  \\ -2150136292403177345923129013208916526016
   83163107174766915835738002
   {x_ 3}^{18} \\ +5369415525375707036477183937541549399616
   3225978928643803111746886
   {x_ 3}^{17} \\
 -1269375479380148438574080671261868057864
   1191192421808450522707324
   {x_ 3}^{16} \\ +2836871266322051379178111812447045352149
   407143579909976950226088
   {x_ 3}^{15}  \\ -5982209091325806162329732673310346362991
   34258811161962399579373
   {x_ 3}^{14} \\ +1187290569901301091407944506457598541586
   05184235508731658294357
   {x_ 3}^{13}  \\ -2211226023346013592735720118341345664726
   2866287675952659637328
   {x_ 3}^{12} \\ +3850068285596588964841282201703973992205
   738072480521413341576
   {x_ 3}^{11}  \\ -6238372874697553568628267306190258219477
   27433644178976756360
   {x_ 3}^{10} \\ 
    \end{array}
    \end{equation*} 
 \begin{equation*} 
\begin{array}{l}
+9354843836484139280769398239228207317040
   3921008788886330336
   {x_ 3}^9  \\ -1289111526960146091059897128746823887288341
   2219299739419408
   {x_ 3}^8 \\ +1617872450840116983877791561306647646149713
   087425933514704
   {x_ 3}^7  \\ -1827750853526793542057087904079820607669879
   38597053694592
   {x_ 3}^6 \\ 
  +1829141810808924707180215084524494095223981
   4288051333120
   {x_ 3}^5  \\ -1584767080810168250170572570688441271916400
   990819270656
   {x_ 3}^4 \\ +1148809700130537873075491563920129377010596
   89924067328
   {x_ 3}^3  \\ -6569798492576356809368814625199214133359843
   746512896
   {x_ 3}^2 \\ +2637296039599514329860357599354551483247082
   03626496
   {x_ 3}  \\ -554614008146390030150597978844287609670452065
   0752 
     \end{array}
    \end{equation*}
    }
     Solving $h_1(x_3)=0$ numerically, we get five positive and four negative solutions, given approximately by $x_3 \approx 1.1800573, x_3 \approx 0.12169301, x_3 \approx 0.20754861, x_3 \approx 1.5303652, x_3 \approx 2.1692738$ and $x3 \approx -0.49217418,  x3 \approx -0.48405841, x3 \approx -0.57150512, x3 \approx -0.6231710$.  In addition,  we see that  real solutions of the system $\{ g_0 =0, g_1=0, g_2=0, g_3=0, g_4=0, h_1(x_3)=0 \}$ with $ u_0 \, u_1 \,u_2 \, x_2 \, x_3 \neq 0$, are   given by 
 \begin{equation*} 
 \begin{array}{l}
  \{  u0 \approx 0.96224102, u1 \approx 0.073621731, u2 \approx 0.24880488, x3 \approx 1.1800573, x2 \approx 0.66994762 \},
 \\
\{ u0 \approx 0.12590407, u1\approx 0.10843941, u2 \approx 0.24541305, x3 \approx 0.12169301, x2 \approx 1.0005214 \}, 
\\
\{  u0 \approx 0.22329332, u1 \approx 0.15361011, u2 \approx 1.1356290, x3 \approx 0.20754861, x2 \approx 1.0023024 \}, 
\\
\{ u0 \approx 1.5711912, u1 \approx  1.3666215, u2 \approx  0.31577380, x3 \approx  1.5303652, x2 \approx  1.1132523 \},  
\\
\{u0 \approx  2.1869813, u1 \approx  0.10309808, u2 \approx  0.33851600, x3 \approx  2.1692738, x2 \approx  1.5911459 \} 
    \end{array}
    \end{equation*}
    and 
     \begin{equation*} 
 \begin{array}{l}
  \{ u0 \approx 0.77867456, u1 \approx 0.18058466, u2 \approx 1.0076703, x3 \approx -0.49217418, x2 \approx 2.8011127 \},
 \\
\{  u0 \approx 0.75556174, u1 \approx 0.56064249, u2 \approx 0.94565380, x3 \approx -0.48405841, x2 \approx 2.8687188 \}, 
\\
\{ u0 \approx 1.0814568, u1 \approx 0.76564465, u2 \approx 0.48544174, x3 \approx -0.57150512, x2 \approx 3.1347988 \}, 
\\
\{ u0 \approx 1.2615164, u1 \approx 0.13352863, u2 \approx 0.44039770, x3 \approx -0.62317102, x2 \approx 3.1494041 \}.  
  \end{array}
    \end{equation*}

Thus, we obtain five Einstein metrics which are non-naturally reductive  by Proposition \ref{prop5.5}. 
We can also see that these five metrics are non-isometric each other, by computing the induced scale invariants (cf. \cite{Chry2}).

For $2332 {x_3}^3-4013 {x_3}^2+1053 x_3-72 = 0 $, we see that $x_2= 1$, $u_0 = u_1 = x_3$ 
and $2332 {x_3}^3-6345 {x_3}^2+4642 x_3-224 u_2-405=0$ are solutions of the system of equations $\{ g_0 =0, g_1=0, g_2=0, g_3=0, g_4=0 \}$. 
Thus,  approximately  we obtain the following  solutions of the system corresponding to the homogeneous Einstein equation:
\begin{equation*} 
 \begin{array}{l}
\{ u_0 =  u_1  = x_3  \approx 0.11699044,\ u_2 \approx 0.24536236,  \ x_2 =1 \}, \\ 
\{ u_0 =  u_1  = x_3  \approx 0.18615305, \ u_2 \approx 1.1352348,  \  x_2 =1 \}, \\ 
\{ u_0 =  u_1  = x_3  \approx 1.4176970, \ u_2 \approx 0.30406198,  \  x_2 =1 \}. 
\end{array}
\end{equation*}

For $ x_3= 1$, we see that  $({x_2}-1) (4949 {x_2}^3-9379 {x_2}^2+5155 x_2-875) =0$, $u_0 = u_2= x_2$ and $525 u_1-19796 {x_2}^3+57312 {x_2}^2-49561 x_2+11520 =0$. In this case,  we obtain the solutions
 \begin{equation*} 
 \begin{array}{l}
\{ u_0  = u_2 = x_2 \approx 0.37412457, \ u_1 \approx 0.069921978,  \ x_3 =1 \}, \\ 
\{  u_0  = u_2 = x_2 \approx 0.43525576, \ u_1 \approx 1.5741577,  \ x_3 =1 \}, \\ 
\{  u_0  = u_2 = x_2 \approx 1.0857500, \ u_1 \approx 0.12593450,  \ x_3 =1 \} 
\end{array}
\end{equation*} 
and $ u_0 = u_1 = x_2 = x_3 = 1$. 
By Proposition \ref{prop5.5},  we see that these solutions  induce  left-invariant Einstein metrics which are  naturally reductive.

\medskip

\noindent{\bf \underline{Case of $\E_8(2)$}}
\begin{equation}\label{eine82}
 \left\{ \begin{tabular}{ll}
 $g_0 =$ & $9 {u_0} {u_1} {x_2}^2
   {x_3}^2+3 {u_0} {u_1} {x_2}^2+18
   {u_0} {u_1} {x_3}^2-27 {u_1}^2
   {x_2}^2 {x_3}^2-{u_1}^2 {x_2}^2-2
   {x_2}^2 {x_3}^2=0$,\\\\  
 $g_1 =$ & $27 {u_1}^2 {u_2}
   {x_2}^2 {x_3}^2+{u_1}^2 {u_2}
   {x_2}^2-12 {u_1} {u_2}^2 {x_2}^2
   {x_3}^2-6 {u_1} {u_2}^2
   {x_3}^2-12 {u_1} {x_2}^2
   {x_3}^2+2 {u_2} {x_2}^2
   {x_3}^2=0$, \\\\  
  $g_2 =$ & ${u_0} {u_2} {x_2}^2
   {x_3}+9 {u_1} {u_2} {x_2}^2
   {x_3}+176 {u_2}^2 {x_2}^2
   {x_3}+36 {u_2}^2 {x_3}+60 {u_2}
   {x_2}^3 {x_3}+6 {u_2} {x_2}^3
   $ \\ 
   & $-360{u_2} {x_2}^2 {x_3}+6 {u_2}
   {x_2} {x_3}^2-6 {u_2} {x_2}+72
   {x_2}^2 {x_3}=0$,\\\\  
  $g_3 =$ & $-{u_0} {x_2}^2
   {x_3}+4 {u_0} {x_3}  -9 {u_1}
   {x_2}^2 {x_3}-104 {u_2} {x_2}^2
   {x_3}+104 {u_2} {x_3}-120 {x_2}^3
   {x_3}-18 {x_2}^3$ \\
   & $+360 {x_2}^2
   {x_3}+6 {x_2} {x_3}^2-240 {x_2}
   {x_3}+18 {x_2}=0$,\\\\ 
  $g_4 =$ & $9 {u_0} {x_2}^2-4
   {u_0} {x_3}^2+9 {u_1} {x_2}^2-104
   {u_2} {x_3}^2+60 {x_2}^3
   {x_3}^2+174 {x_2}^3 {x_3}-360
   {x_2}^2 {x_3}$ \\
   & $-174 {x_2}
   {x_3}^3+240 {x_2} {x_3}^2+150
   {x_2} {x_3}= 0$.
  \end{tabular} \right.
 \end{equation}
 
    Consider the polynomial ring $R= {\mathbb Q}[z,  u_0,  u_1,  u_2, x_2,  x_3] $ and an ideal $I$, generated by polynomials $\{g_0, \, g_1, \, g_2, $ $ g_3, \, g_4, z \,  u_0 \, u_1 \,u_2 \, x_2 \, x_3-1 \}$.   Fix the lexicographic ordering $>$,  with $ z >  u_0 >  u_1>  u_2 > x_2 > x_3$ for a monomial ordering on $R$. Then, by the aid of computer, we compute a  Gr\"obner basis for the ideal $I$; this contains a  polynomial  of   $x_3$  given by 
$({x_3}-1) ( 14863 {x_3}^3-23537 {x_3}^2+3841 x_3-159) h_1(x_3)$, where
$h_1(x_3)$ is a polynomial  of degree 101 given by 
{ \small 
 \begin{equation*} 
\begin{array}{l} 
 h_1(x_3)=
4300817068102634554892991062842129042773576732311618329635448320000000000 {x_3}^{101} \\  -48434234613549258526882893300617941462391685037263593180733084672000000000 {x_3}^{100} \\ +384412414000835153497962732335882305935605918425027227535381768849920000000 {x_3}^{99} \\-2343356739326681041821514580356528768679045141260294555070279790135552000000 {x_3}^{98} \\ +12118750392476326222236533006962244871827435009196838279324500368662097280000 {x_3}^{97} \\  -54694072490084700582663695979794196000771759437095459232706976031069894016000 {x_3}^{96}  \\ +221353224406881636197442133058784492973200290115121516088278948796937464371200 {x_3}^{95}  \\ -813886740762282999039919900837623187565903920555774696522128146040136921808640 {x_3}^{94}  \\ +2749311143142358208694757864933801593004701230356043145755440407202107969205248 {x_3}^{93}  \\ -8592602645047256285489822902989371514808504859611203756617492188201050618429696 {x_3}^{92}  \\ +24987213573707362099498581598734764124216513406921443771718443643063519416173424 {x_3}^{91}  \\ -67886371151784002113561327332159978566678061704085234326096771582310691912274832 {x_3}^{90}  \\ +172855472915888793901915966837849077301067956523296460430225355574530069733610000 {x_3}^{89}  \\ -413520597399357357100869381565342441018263038178115494696023037473035005867485760 {x_3}^{88}  \\ +931201134042478123783110021283671383867077991181025700457605569636368192762700536 {x_3}^{87}  \\ -1977180680461234875709924061155165455728674465172548132743172377066740066067927072 {x_3}^{86}  \\ +3963718711108383180414158119936397189175742439421867408680408165110894988557985004 {x_3}^{85}  \\ -7513418122634704515990854605187661012865820116011720424270638810049480025748301056 {x_3}^{84}  \\ +13485034115259604979202660060065426293023734909520325939447188838010611615994227899 {x_3}^{83}  \\ -22953711751263858356155615317993870971118178202495574682769731600848840969284513211 {x_3}^{82}  \\ +37118739850644510331943371356546784531091762451361904566881906562825838499125532570 {x_3}^{81}  \\ -57143381922307096358894776517481035852112895921488424717999575316225143862664779938 {x_3}^{80}  \\ +83933828799625827613068937568371537392889738211567188929892877463435790674461594289 {x_3}^{79}  \\ -117916849189812448657352362010079618375373892675617348197030706818467434459065258729 {x_3}^{78}  \\ +158840601295945663967771914515932889808801662497376498082318501776572544579687584532 {x_3}^{77}  \\ -205640403374951186570968875180984604118953468621671855171500403944532124984014864228 {x_3}^{76}  \\ +256373362974138627914650186250703319027584745315995489017313493311167581918499221209 {x_3}^{75}  \\ 
-308180457016417463546322721674440420609583883456596081505287450408132053995421688417 {x_3}^{74}  \\ +357437554714305319709984089082160479773294679519620215685747593181661184213216972570 {x_3}^{73}  \\ -399909639063653725636419295835615541787351965342966672935569109836991011645113298530 {x_3}^{72}  \\ +431447528558011049848674754343083702264157712308167972064641240229606934035503112751 {x_3}^{71}  \\ -448435605178939066244729947066284792252456509588375968250745873651107135576456816847 {x_3}^{70}  \\ +448909425142372532200712964875932843329902983017003297057000758414145225424316299356 {x_3}^{69}  \\ -432505986718923727726733642983121597245375455185536827443718365457823110316105606920 {x_3}^{68}  \\ +400926851409634138076302850835949788171629035117892773365279927750703254819903773328 {x_3}^{67}  \\ -356896733431404331148195671666424421693848148003049729639070229457919596548217497480 {x_3}^{66}  \\ +304117822268847097534981499298924198481044422812969119854167753513347346281757734984 {x_3}^{65}  \\ 
-246800083651937571501758366885734742867433138789154590922218902161270727737822408680 {x_3}^{64}  \\ +189342447332993406197869816861229081733498325675444469649008978221731371770480267524 {x_3}^{63}  \\ 
 -136571233453352364375287398041234776929197601714558372362384908184149357492228993124 {x_3}^{62}  \\ +91614174252541036183261171953169851489145245846377901063718187242760215516142261424 {x_3}^{61}  \\ -57099174743574573249126209346809266661445782155473938421471474074519986140555333552 {x_3}^{60}  \\ 
+32358254824131527171676058875969221067276651159847698655117241266482730946477408340 {x_3}^{59}  \\ -16824644814321478644964952093517021134010663065144141852879740223864734400865575220 {x_3}^{58} \\ +7514287187418446688509325329918072893001195320661832719856558857169013514235203720 {x_3}^{57}  \\ -2984410089768692604131117349590773663180032424321033953036482267179033266809725640 {x_3}^{56} \\ +732445676639314254349675396500675892268684514864442230285668149583939903578943676 {x_3}^{55}  \\ -67917208841309119819997191749712546424834785848702592652838825458694211721954412 {x_3}^{54}  \\ -201226807780484731518790106748244287589322324607269243036806074712412930566274184 {x_3}^{53} \\ +129154056812060487385840015218861595472761603600998499969817431810471728718352640 {x_3}^{52}  \\ -107834101343192331330480731011834920513318003452547815351643422543284825020233542 {x_3}^{51} \\ 
+39139302564982238988531563687231140402316126213769637000710966449760382422072550 {x_3}^{50}  \\
 -25014007394124148943263880664666605315368407336198528031642273872797438993869524 {x_3}^{49} \\ +5149637394602359328977892265587734288759270529004199895668361076640377995064036 {x_3}^{48}  \\ -3349820696141993438064008629889341109854136879528713117453652408803499479096386 {x_3}^{47}  \\ -85257700269450629092725979032910336876774470641715409238975393462391067569870 {x_3}^{46}  \\ 
  \end{array}
    \end{equation*} 
 \begin{equation*} 
\begin{array}{l}
-172220176540080050421320403630494784694130728150815944148121248793493036348296 {x_3}^{45}  \\ 
 -181152831869469320750602320397679974645876788987125838360682382259389590194584 {x_3}^{44} \\ +29933044025820518391774124363260016171392605814344766308091877183115582225214 {x_3}^{43}  \\ 
-40721449677473162458230185588830044223400555655697639872956766127283187956142 {x_3}^{42} \\ 
+7974104969338520813563262017073690821078239472420921364391270897806684138700 {x_3}^{41}  \\ -5189717122288769398584568108177516806684072008723652373050127514128137067260 {x_3}^{40} \\ 
+817800735737374024438923649637174077182654957307312155831702991153313172306 {x_3}^{39}  \\ -369906466718587873259473075984672842769304229881787413153149704347082720882 {x_3}^{38} \\ +12420361539059598078858560583923436698430183398779246053297377510620249848 {x_3}^{37}  \\ -185006606576738833950603767371099558647439473457636904506936645730880144 {x_3}^{36}  \\ -8953116075124778902766759252670482762033989180259875789742764174595473548 {x_3}^{35} \\ +3507574455024951317268532533764032607025789087442517730121154962348230396 {x_3}^{34}  \\ -1506654748146747600991905293586765340051270768797114324562291232201486232 {x_3}^{33} \\ +462728840082228859334694943477743198307650012420374125365389249712651768 {x_3}^{32}  \\ -131776588232594972255777649206567320338061471477811725806702054151290252 {x_3}^{31} \\ +31472043180361439617980833132544850110943883369080890722278786629107628 {x_3}^{30}  \\ -5961664481968065504823382003538804229336739136240908717770483137477456 {x_3}^{29} \\ +722351769246397427865270518182475924172222357378144113248455430696656 {x_3}^{28} \\ +93878343461212773918586759022334368944010651758103940471975728130612 {x_3}^{27}  \\ -95592681800341241277296233300832558302214353663993838838577883834868 {x_3}^{26} \\ +41634790191131063210888629765583357020845625933576181588238778182552 {x_3}^{25}  \\ -13936295068064242654729392915446150762330722439463395349506372368488 {x_3}^{24} \\ +4045182362993096546784545052702282647971953511355346978628439483220 {x_3}^{23}  \\ -1062701894734956175602327175315458674030959500282038965423982676924 {x_3}^{22} \\ +256459649784050971207284569212441996943675040503103180406710637212 {x_3}^{21}  \\ -57709734724845648912090174522151135714278853267038668156157868992 {x_3}^{20} \\ +12153005083455119529122284311415304647462102060888120578436682307 {x_3}^{19}  \\ -2406401626414324343746563575869808824782931044900643158950599971 {x_3}^{18} \\ +449105840481193161884750567257028703367516412585554400171866890 {x_3}^{17}  \\ -78996265368429079118013549058436508446509452503993511834925650 {x_3}^{16} \\ +13109093052218561111137671541747428560864068581509706530641689 {x_3}^{15}  \\ -2049218622402956644350490453379175367227864675085518185017233 {x_3}^{14} \\ +301303415905142916449843588495664606433759693923619398291732 {x_3}^{13}  \\ -41580761868772328901842590178904706369026472084206008451876 {x_3}^{12} \\ +5365113040262943479433997705054180067036308260891222155505 {x_3}^{11}  \\ -644693829313954760172498533221854992519656915601541723705 {x_3}^{10} \\ +71731828298864812802268603899355624836434020184867743210 {x_3}^{9}  \\ -7334795513210803706517436006266060058370643492181530930 {x_3}^{8} \\ +682989728878875810684537346120890928588745523322856055 {x_3}^{7}  \\ -57172697972619449741387104525586264686698343949422455 {x_3}^{6} \\ +4226158290506747020131443419536269019866825889091500 {x_3}^{5}  \\ -269121764902623042560625370550056308979867340869800 {x_3}^{4} \\ +14198857298392070342554145297476835831442968012100 {x_3}^{3}  \\ -578321988619101115121213409365880613402609688700 {x_3}^{2} \\ +15961340415131975059297267606639156809924912000 {x_3}  \\ -220720851022013939349538517195371169708198400 
   \end{array}
    \end{equation*} 
} 

Solving $h_1(x_3)=0$ numerically, we get five positive and four negative solutions, which are given approximately by $x_3 \approx 0.2205104, x_3 \approx 0.072293790, x_3 \approx 0.11492789, x_3 \approx 1.5437649, x_3 \approx 1.972915$ and $ x3 \approx -0.30929231, x3 \approx -0.30556009, x3 \approx -0.44683455,  x3 \approx -0.39992522$. Furthermore, we take the following real solutions of the system $\{ g_0 =0, g_1=0, g_2=0, g_3=0, g_4=0, h_1(x_3)=0 \}$ with $ u_0 \, u_1 \,u_2 \, x_2 \, x_3 \neq 0$: 
    \begin{equation*} 
 \begin{array}{l}
    \{  u0 \approx 1.0725485, u1 \approx 0.045012693, u2 \approx 0.31014510, x3 \approx 0.2205104, x2 \approx 0.75504389 \},
 \\
\{ u0 \approx 0.073985052, u1\approx 0.067061174, u2 \approx 0.30836161, x3 \approx 0.072293790, x2 \approx 1.0001177 \}, 
\\
\{  u0 \approx 0.12074935, u1 \approx 0.096083894, u2 \approx 1.0975908, x3 \approx 0.11492789, x2 \approx 1.0004421 \}, 
\\
 \{  u0 \approx 1.5886788, u1 \approx 1.3434091, u2 \approx 0.37367555, x3 \approx 1.5437649, x2 \approx 1.1394129 \},  
\\
 \{  u0 \approx 2.0094538, u1 \approx 0.057685208, u2 \approx 0.39443471, x3 \approx 1.9729151, x2 \approx 1.4757000 \} 
    \end{array}
    \end{equation*}
    and 
     \begin{equation*} 
 \begin{array}{l}
 \{  u0 \approx 0.57479844, u1 \approx 0.091481076, u2 \approx 1.0446277, x3 \approx -0.30929231, x2 \approx 2.2251780 \},
 \\
\{  u0 \approx 0.56438862, u1 \approx 0.57295621, u2 \approx  0.98344796, x3 \approx -0.30556009, x2 \approx 2.3226977 \}, 
\\
\{  u0 \approx 1.1366881, u1 \approx 0.71052398, u2 \approx  0.50096966, x3 \approx -0.44683455, x2 \approx 2.5636731 \}, 
\\
\{  u0 \approx 0.95640165, u1 \approx 0.80160322, u2 \approx  0.54149306, x3 \approx -0.39992522, x2 \approx 2.5715323 \}.
  \end{array}
    \end{equation*}
Thus we obtain five Einstein metrics which according to Proposition \ref{prop5.5},
 are non-naturally reductive. In particular, a computation of  the related scale invariants shows that these  five metrics are non-isometric each other.

\smallskip

For $ 14863 {x_3}^3-23537 {x_3}^2+3841 x_3-159 = 0 $, we see that $ x_2= 1$, $u_0 = u_1 = x_3$ 
and $44589 {x_3}^2-65894 {x_3}+2756 u_2+3573=0$ are  solutions of the system of equations $\{ g_0 =0, g_1=0, g_2=0, g_3=0, g_4=0 \}$. 
Thus,    approximately we obtain the following solutions:
\begin{equation*} 
 \begin{array}{l}
\{ u_0 =  u_1  = x_3  \approx 0.070481512,\ u_2 \approx 0.30834779,  \ x_2 =1 \}, \\ 
\{ u_0 =  u_1  = x_3  \approx 1.4050939, \ u_2 \approx  0.35656500,  \  x_2 =1 \}, \\ 
\{ u_0 =  u_1  = x_3  \approx 0.10802147, \ u_2 \approx 1.0974869,  \  x_2 =1 \}.  
\end{array}
\end{equation*}

For $ x_3= 1$, we see that  $(x_2-1) (864 {x_2}^3-1676 {x_2}^2+973 x_2-177) =0$, $u_0 = u_2= x_2$ and $59 u_1-3456 {x_2}^3+10160 {x_2}^2-9003 x_2+2240=0$. Thus, in this case the solutions approximately have the form 
 \begin{equation*} 
 \begin{array}{l}
\{ u_0  = u_2 = x_2 \approx 0.41888766, \ u_1 \approx 0.042734087,  \ x_3 =1 \}, \\ 
\{  u_0  = u_2 = x_2 \approx 0.46172446, \ u_1 \approx 1.5439133,  \ x_3 =1 \}, \\ 
\{  u_0  = u_2 = x_2 \approx 1.0592027, \ u_1 \approx 0.072250884,  \ x_3 =1 \} 
\end{array}
\end{equation*} 
and $ u_0 = u_1 = x_2 = x_3 = 1$. 
According to Proposition \ref{prop5.5}, these values define   naturally reductive Einstein metrics.

\medskip

\noindent{\bf \underline{Case of $\E_7(3)$}}
\begin{equation}\label{eine73}
 \left\{ \begin{tabular}{ll}
 $g_0=$ & $4 {u_0} {u_1} {x_2}^2
   {x_3}^2+6 {u_0} {u_1} {x_2}^2+8
   {u_0} {u_1} {x_3}^2-9 {u_1}^2
   {x_2}^2 {x_3}^2-{u_1}^2 {x_2}^2-3
   {u_1}^2 {x_3}^2-5 {x_2}^2
   {x_3}^2=0$,\\\\  
 $g_1=$ & $9 {u_1}^2 {u_2} {x_2}^2
   {x_3}^2+{u_1}^2 {u_2} {x_2}^2+3
   {u_1}^2 {u_2} {x_3}^2-10 {u_1}
   {u_2}^2 {x_2}^2 {x_3}^2-5 {u_1}
   {u_2}^2 {x_3}^2-3 {u_1} {x_2}^2
   {x_3}^2$ \\
   & $+5 {u_2} {x_2}^2 {x_3}^2=0$, \\\\  
 $g_2=$ & $2 {u_0} {u_2} {x_2}^2 {x_3}+108
   {u_1} {u_2} {x_2}^2 {x_3}+190
   {u_2}^2 {x_2}^2 {x_3}+75 {u_2}^2
   {x_3}+90 {u_2} {x_2}^3 {x_3}+30
   {u_2} {x_2}^3$ \\
   & $-540 {u_2} {x_2}^2
   {x_3}+30 {u_2} {x_2} {x_3}^2-30
   {u_2} {x_2}+45 {x_2}^2 {x_3}=0$,\\\\  
 $g_3=$ & $-{u_0} {x_2}^2 {x_3}+4
   {u_0} {x_3}-54 {u_1} {x_2}^2
   {x_3}+36 {u_1} {x_3}-20 {u_2}
   {x_2}^2 {x_3}+20 {u_2} {x_3}-90
   {x_2}^3 {x_3}-45 {x_2}^3 $ \\
   & $+270 {x_2}^2 {x_3}+15 {x_2}
   {x_3}^2-180 {x_2} {x_3}+45 {x_2}=0$,\\\\  
 $g_4=$ & $9 {u_0} {x_2}^2-4 {u_0}
   {x_3}^2+36 {u_1} {x_2}^2-36 {u_1}
   {x_3}^2-20 {u_2} {x_3}^2+45
   {x_2}^3 {x_3}^2+120 {x_2}^3
   {x_3}-270 {x_2}^2 {x_3}$ \\
   & $-120 {x_2}
   {x_3}^3+180 {x_2} {x_3}^2+60 {x_2} {x_3}= 0$.
  \end{tabular} \right.
 \end{equation}

     We consider the polynomial ring $R= {\mathbb Q}[z,  u_0,  u_1,  u_2, x_2,  x_3] $ and an ideal $I$ generated by polynomials $\{g_0, \, g_1, \, g_2, $ $ g_3, \, g_4, z \,  u_0 \, u_1 \,u_2 \, x_2 \, x_3-1 \}$.   We take a lexicographic order $>$  with $ z >  u_0 >  u_1>  u_2 > x_2 > x_3$ for a monomial ordering on $R$. Then, by the aid of computer, we see that a  Gr\"obner basis for the ideal $I$ contains a  polynomial  of   $x_3$  given by 
$({x_3}-1) ( 5632 {x_3}^{3}-9488 {x_3}^{2}+3933 x_3-477 ) h_1(x_3)$, where
$h_1(x_3)$ is a polynomial  of degree 119 given by 
{ \small 
 \begin{equation*} 
\begin{array}{l} 
 h_1(x_3)= 5317991353240733131727570427468867047230653060663383949312000000 {x_3}^{119}  \\ -98019036075297848183056077552432780274599684871346934691921920000 {x_3}^{118} \\ +1098759293323246546512110892440119473621419358215417540389109760000 {x_3}^{117}  \\ -9305718974821085677811148533918240248548281811944475222438051840000 {x_3}^{116} \\ +65113268802402200478525426791012973944464656086732606877047193600000 {x_3}^{115}  \\ -394046712276679250169937971915477424725853689736022406260651458560000 {x_3}^{114} \\ +2120477184719184687909529467574362503309040056101340761096036810752000 {x_3}^{113}  \\ -10334275693315162730742501425160945657985705715529777725552751280128000 {x_3}^{112} \\ +46202883626048272893214883998659314525038880033265154243257408946176000 {x_3}^{111}  \\ -191297437298409715833973441970431226167739110188661910161889358249984000 {x_3}^{110} \\ 
+738780827716860155253065617261688708737616255470467254037109394531942400 {x_3}^{109}  \\ -2676181837866157465124121869252517828887974363206276139033897785517670400 {x_3}^{108} \\ +9133426483052267815004033114279867587137539131902245605998876572093644800 {x_3}^{107}  \\ -29472676795039732970734547702915016051055469216731276093351250213864755200 {x_3}^{106} \\ +90184928909873771799432761661136216708184513107610597609746891377263136000 {x_3}^{105}  \\ 
 \end{array}
    \end{equation*} 
 \begin{equation*} 
\begin{array}{l}
-262307357426217645783615015266291229953598598109678467351599564732054385920 {x_3}^{104} \\ +726607074049122059949903497786269863005806535278198855839594575897076134400 {x_3}^{103}  \\ 
-1920000384806180026768867421031537805803949214018610412439630758504547658240 {x_3}^{102} \\ 
+4846060182856939686502282874159690356258654662377278202700184434834229251840 {x_3}^{101} \\
 -11695698788657658011476017683648425807762896347480114232710059819716386479872 {x_3}^{100}  \\
   +27013496275542415222023898874133963150112893916727310080078230054731729365760 {x_3}^{99} \\ -59749103051104069900082654913774362932116508404875695431707764026079324402176 {x_3}^{98} \\ +126611218508152514171021884078842994748237775495659584068454223721873984155136 {x_3}^{97} \\ -257106642098986647794761688311621745956940627647648459595343771802548540196608 {x_3}^{96} \\ +500353563387757953315580846341686490346251719009013776065028881870121108834304 {x_3}^{95} \\ 
-933016705955937135119323562999416823231028982454515912424262648992194752998400 {x_3}^{94} \\  +1666350818986961443923020156264509852302084853480440742528762549779805507678176 {x_3}^{93} \\  -2848340507587261881920978915930434665052734871687834518943803961285485435643680 {x_3}^{92} \\+4654627782602904770609612602896237363606246929547882870907718816433560227299552 {x_3}^{91} \\ -7260233976254603124125288557777967078459922242578361722097381607395252017834656 {x_3}^{90} \\+10784574134608484614276985456547825052341992329805230146179268661318496034215328 {x_3}^{89} \\ -15206890525973623513320114356674689405486194792925228453628670976495944887480416 {x_3}^{88}  \\ +20259564142565286382067849026583726403797529586011137954566489403735710614317808 {x_3}^{87} \\ -25322646140778716665367209582629589229402613718695019077543030437619026934511872 {x_3}^{86}  \\ +29361049740649058190095422773938113133727049612048263846703870604726429273818848 {x_3}^{85} \\ 
-30958898262828372238095893876477609469256161228209064040807927251062574932231808 {x_3}^{84}  \\ +28503379824617763124097418982546938583969974303980617091962899726958833174324912 {x_3}^{83} \\ 
-20543542214535775343928825028111253860312177959979652483009700253869835271874960 {x_3}^{82}  \\ +6294369642269549673426137151386593173604138265575910629636206811805707511761437 {x_3}^{81}  \\ +13818211675329783251259102108459155176182048091914822307750576223359914637453455 {x_3}^{80} \\ -37746690309720320985994632077313901348434569764528336593925529654080198675646244 {x_3}^{79}  \\ +61753673833125499831493137549382712686215192881835369866733354263253711754859516 {x_3}^{78} \\ -80854547668572429281210675119560892341208614027928965716233593125981815435023888 {x_3}^{77}  \\ +89812864504052969401079860636375036763636864288507524282286470644623557615081560 {x_3}^{76} \\ -84535429159155233862550289641436464123003887379615811860052338807579720883023478 {x_3}^{75}  \\ +63510492448268670372687644118857216784764228069599171336461180087553652873279182 {x_3}^{74} \\ -28806449551758167167381070887475541786044250380199710546870177374097227663790102 {x_3}^{73} \\ -13819200006296826887332553242838278598966190774297960615356684080259858578141754 {x_3}^{72}  \\ +55928883492126546344671899768119435072107772811080555472486301876729279728981672 {x_3}^{71} \\ -88395953903308372639893176053856383836415322538976509138114310058609164273031496 {x_3}^{70}  \\      +103856916225563001905855479279765329540729798592811495080146581136794192997376655 {x_3}^{69} \\ -98858852486177981613232255698962149696211353827260075743006756056647230069954747 {x_3}^{68}  \\ +74958489545439966999218456704579034920402987004206042834929843114050283410125302 {x_3}^{67} \\ -38345022907819988966927460815044040501482214859561550856915205551605926851820930 {x_3}^{66} \\ -1933984189289290909758871181978501503101517835043559405349470423047513359646551 {x_3}^{65}  \\ +36563363991331046122778540325816464357200982292566309581429772964642137060912551 {x_3}^{64} \\ -58503023292998810962908820458611734395532683946103963774344297544010645359207942 {x_3}^{63}  \\ +64667016802341655818291153908789200846667212133580648087527559877011677639859826 {x_3}^{62} \\ -56276689024449602807134567146798786876912510628461995409738619384328995479952536 {x_3}^{61}  \\ 
+37923246804712273090895791414049387753363649314958327356686885503364596103010544 {x_3}^{60} \\ 
-15804099822546624431889611254391986315665298657063854127105952207976547865842410 {x_3}^{59} \\ -4200208277200018663918651182754824696675095403130261450762898272570211599919886 {x_3}^{58}  \\ +18003614435967420980166907884494599849844980028338457204276354810013230173559043 {x_3}^{57} \\ -23996289375134293702397518464953017416246840639996724092934265805096201432712959 {x_3}^{56}  \\ +22896421430919467671417906469523160708148809783127242961452861967501009283931308 {x_3}^{55} \\ -16995162401109429557563518044417613234180033412772287866423551699626340777728236 {x_3}^{54}  \\ +9149147822823806906962230588662742746287438437686110908864974694553824509909820 {x_3}^{53} \\ -1869742082865858170225132674874943447999722991925364389597345372662327003166296 {x_3}^{52} \\ -3251030792816307243173986937505264238031030686149502911284928922438610811256614 {x_3}^{51}  \\ +5695431308223163606750558298931569422162344146029502538889329771578111276950458 {x_3}^{50} \\ -5821933601002230572963625899395935531203363265386106574852162725576728360729418 {x_3}^{49}  \\ +4474341067570798992753095703878403797977587900230422629718901911972829744054802 {x_3}^{48} \\ -2576535585553216941235505881748794931817753435988641400362962157269000059779618 {x_3}^{47}  \\ +843529225509532660779460709613447818199390135853982049123805773210824449139958 {x_3}^{46}  \\ +340595998345860120135567539574048861325777859683069626803906775976914933436704 {x_3}^{45} \\ -900249258061313444009645965975640288401133468845786027012643996236702493508180 {x_3}^{44}  \\ 
\end{array}
    \end{equation*} 
 \begin{equation*} 
\begin{array}{l}
+963559193738019288439599749329178881976923800367915767314219049684509727035118 {x_3}^{43} \\ -740277516467837260839729452407794530488339855141041598796887742435105433205290 {x_3}^{42}  \\ +426598693884268067338770371748908806365114744547975422291326227423958316393441 {x_3}^{41} \\ -154344454413030230195558495324921592627026175652284039931753566289870331238957 {x_3}^{40} \\ -18885641734696226473355081221670109230833882865922496011983680467999251729610 {x_3}^{39}  \\ +92892730203535900594667159886630284670524914382364037000390898844638421946062 {x_3}^{38} \\ 
-97845830445709246884644720769627109615510112764829332200297118664141758644699 {x_3}^{37}  \\ +69959142495111919393279706463185659777899040344087434117780022465957676438599 {x_3}^{36} \\ -36656988921005907726196449218481987931866142444159602442626409986332511730722 {x_3}^{35}  \\ 
+11970618337580002956766486464244718621348103877552988866424400460263954584070 {x_3}^{34}  \\ +1040976609337647949468550831667972659771119324734367564216165811934766725081 {x_3}^{33} \\ -5200598435234017820505208745149033546910570946107973493933702513891175352873 {x_3}^{32}  \\ 
+4753327116203770112084797783068716505097074618408061967453779229506733606706 {x_3}^{31} \\ -2927038165335938086853904991790396160894433995898356814034484966458012844294 {x_3}^{30}  \\ 
+1343092453302416775833184680845511498973033819341236028253064406432275375821 {x_3}^{29} \\ -429090583194986620166032640192537465928491354854617432007488220581026924349 {x_3}^{28}  \\ +50203978196716392445791374937845914764500892727351051576948994601806057256 {x_3}^{27}  \\ +46158080222791719528724103649073653365386485473469418723802372444031145104 {x_3}^{26} \\ -41003170223247363091599963775495621392415719665053564221553740094953432948 {x_3}^{25}  \\ +19285883832633296542887968256983510105494652603418248603315795245263457344 {x_3}^{24} \\ -5634188094551462720145386313127480084110640040599162247381032217495719512 {x_3}^{23}  \\ +475504463177526413957760735598869270023707338832121518609174521427627560 {x_3}^{22}  \\ +558986006147223809653953006323757972748961682009357527454919823835136791 {x_3}^{21} \\ 
-399619501134062850592562711231620683906906119687234950479946264083966699 {x_3}^{20}  \\ +155786600706317593080732257843590930097562320551848816986555178225045864 {x_3}^{19} \\ -37306475891729024014762407510734957928120463380785652234775685168349684 {x_3}^{18}  \\ +2187089992872666480014105137597260558103254357667821151374732139513218 {x_3}^{17}  \\ +2818112964763938831003560438163327584479611740100456207329307087403146 {x_3}^{16} \\ -1614594187941944814891286660729588683748976829917613597170905151176782 {x_3}^{15}  \\ +516151068910408155820399739685397453980831540964931713471262497972746 {x_3}^{14} \\ -100829000245539286655843015478331689172231142856179692701741937186305 {x_3}^{13}  \\ +3368364183716110092469752017751431110250756244416851974618979321701 {x_3}^{12}  \\ +6572883557813607115063681865239951263598799030394109795800045060474 {x_3}^{11} \\ -3283450397388770774006401566576559204621165267840621179138287123670 {x_3}^{10}  \\ +1031319099761786646251537681910532174270190841045644984130431961954 {x_3}^{9} \\ -249680291106143280241018835564758843320175661225821177424525292822 {x_3}^{8}  \\ +49212547709312655403731992925601954539383684164719382796569460742 {x_3}^{7} \\ -8026521807779078099496238258304727304087936835093634106534385898 {x_3}^{6}  \\ +1081478431569654626009258053539930201185578706540437961929852917 {x_3}^{5} \\ -118566734244541430886739082169060500770838134263466432240586949 {x_3}^{4}  \\ +10256500407063673515218118277406679603598463165413692852750954 {x_3}^{3} \\ -661996321847218796068287208154089929196932463764640942695594 {x_3}^{2}  \\ +28572142588965250828016623923248449622787682754546590053837 {x_3} \\ -623744015652591601670205734762122870831994162006221990413. 
   \end{array}
    \end{equation*} 
} 

Solving $h_1(x_3)=0$ numerically, we obtain seven real  solutions  which are given   by $x_3 \approx 1.1800573, x_3 \approx 0.49280351, x_3 \approx 1.1060677, x_3 \approx 1.3849054, x_3 \approx 2.4753269$ (we state only the positive). As a consequence, real solutions of the system of equations $\{ g_0 =0, g_1=0, g_2=0, g_3=0, g_4=0, h_1(x_3)=0 \}$ with $ u_0 \, u_1 \,u_2 \, x_2 \, x_3 \neq 0$ are of the form
    \begin{equation*} 
 \begin{array}{l}
  \{ u0 \approx 0.30587680, u1 \approx 0.23162043, u2 \approx 0.11719295, x2 \approx 1.0035307, x3 \approx 0.27827971 \},
 \\ 
  \{  u0 \approx 0.43465453, u1 \approx 0.27733727, u2 \approx 1.4182653, x2 \approx 1.0086185, x3 \approx 0.37945991 \},
 \\
\{ u0 \approx 0.33445150, u1 \approx 0.23695076, u2 \approx 0.36978513, x2 \approx 0.31241976, x3 \approx 1.0008636 \}, 
\\
\{  u0 \approx 0.28679936, u1 \approx 0.36605764, u2 \approx 0.14958786, x2 \approx 0.28763468, x3 \approx 1.0026185\}, 
\\
\{ u0 \approx 0.77541704, u1 \approx 0.19715742, u2 \approx 0.11437270, x2 \approx 0.52666358, x3 \approx 1.0826430 \},  
\\
\{ u0 \approx 1.5820396, u1 \approx 0.30622692, u2 \approx 1.3221125, x2 \approx 1.2303151, x3 \approx 1.3552648 \}, 
\\
\{ u0 \approx 2.3846395, u1 \approx 0.30253103, u2 \approx 0.17015362, x2 \approx 1.6249173, x3 \approx 2.2246116 \}.
 
    \end{array}
    \end{equation*}
Thus, we obtain seven Einstein metrics which are non-naturally reductive  by Proposition \ref{prop5.5}. 
In particular, these  seven metrics are non-isometric each other and this follows after a computation of the corresponding   scale invariants.

\smallskip

  For $({x_3}-1) ( 5632 {x_3}^{3}-9488 {x_3}^{2}+3933 x_3-477 ) = 0$
 For $  5632 {x_3}^{3}-9488 {x_3}^{2}+3933 x_3-477  = 0 $, we see that $ x_2= 1$, $u_0 = u_1 = x_3$ 
and $1408 {x_3}^2-1948 x_3+53 u_2+387=0$ for solutions of the system of equations $\{ g_0 =0, g_1=0, g_2=0, g_3=0, g_4=0 \}$. 
Hence,  in this case we conclude that the following parameters define solutions of the homogeneous Einstein equation: 
\begin{equation*} 
 \begin{array}{l}
\{ u_0 =  u_1  = x_3  \approx 0.24536236,\ u_2 \approx 0.11699044,  \ x_2 =1 \}, \\ 
\{ u_0 =  u_1  = x_3  \approx 0.30406198, \ u_2 \approx  1.4176970,  \  x_2 =1 \}, \\ 
\{ u_0 =  u_1  = x_3  \approx 1.1352348, \ u_2 \approx 0.18615305,  \  x_2 =1 \}.
\end{array}
\end{equation*}

For $ x_3= 1$, we see that  $(x_2-1) (7 x_2-2) =0$, $u_0 =u_1 =  u_2= x_2$. Thus we obtain  a solution,   given by 
 \begin{equation*} 
 \begin{array}{l}
\{ u_0  =  u_1 = u_2 = x_2  = 2/7,  \ x_3 =1 \}, \\ 
\end{array}
\end{equation*} 
and $ u_0 = u_1 = x_2 = x_3 = 1$. 
Using Proposition \ref{prop5.5} we deduce that the induced left-invariant    Einstein metric are naturally reductive.

    \section{Left-invariant non-naturally reductive Einstein metrics on Lie groups of Type $III_{b}(3)$}\label{TypeIII}
\subsection{The Lie groups $\E_6(3)$.}  In this final section we examine  the Lie group $\E_6(3)$, which is the unique compact simple Lie group $G=G(i_{o})$ of Type $III_{b}(3)$, see Theorem \ref{class}.  Let us denote its Lie algebra by $\fr{g}$. 
Consider the  orthogonal decomposition 
\begin{equation}\label{III3}
\fr{g}=\fr{k}_{0}\oplus\fr{k}_{1}\oplus\fr{k}_{2}\oplus\fr{k}_{3}\oplus\fr{p}_{1}\oplus\fr{p}_{2}\oplus\fr{p}_{3}=\fr{m}_{0}\oplus\fr{m}_{1}\oplus\fr{m}_{2}\oplus\fr{m}_{3}\oplus\fr{m}_{4}\oplus\fr{m}_{5}\oplus\fr{m}_{6}.
\end{equation}
with $\fr{k}_{0}\cong\fr{u}_{1}$, $\fr{k}_{1}\cong \fr{su}_{2}$, $\fr{k}_{2}\cong\fr{su}_{3}\cong\fr{k}_{3}$. Hence, and according to Table 2, it is $d_{0}=1$, $d_{1}=3$, $d_{2}=d_{3}=8$, $d_{4}=36$, $d_{5}=18$ and $d_{6}=4$.   A left-invariant metric  on $\E_6$ is given by
 \begin{eqnarray}
 \langle \ , \  \rangle&=&u_{0}\cdot B|_{\fr{k}_{0}}+u_{1}\cdot B|_{\fr{k}_{1}}+u_{2}\cdot B|_{\fr{k}_{2}}+u_{3}\cdot B|_{\fr{k}_{3}}+x_{1}\cdot B|_{\fr{p}_{1}}+x_{2}\cdot B|_{\fr{p}_{2}}+x_{3}\cdot B|_{\fr{p}_{3}}\nonumber\\
 &=&y_{0}\cdot B|_{\fr{m}_{0}}+y_{1}\cdot B|_{\fr{m}_{1}}+y_{2}\cdot B|_{\fr{m}_{2}}+y_{3}\cdot B|_{\fr{m}_{3}}+y_{4}\cdot B|_{\fr{m}_{4}}+y_{5}\cdot B|_{\fr{m}_{5}}+y_{6}\cdot B|_{\fr{m}_{6}}, \label{invIII3}
 \end{eqnarray}
 for some positive numbers $u_{i}, x_{j}, y_{m}\in\bb{R}_{+}$. This metric is also $\Ad(K)$-invariant and since $\fr{m}_{i}\ncong\fr{m}_{j}$ for any $3\leq i\neq j\leq 6$, all  $G$-invariant metrics on the base space $M=G/K$ are a multiple of
 \[
 ( \ , \ )=x_{1}\cdot B|_{\fr{p}_{1}}+x_{2}\cdot B|_{\fr{p}_{2}}+x_{3}\cdot B|_{\fr{p}_{3}}=y_{4}\cdot B|_{\fr{m}_{4}}+y_{5}\cdot B|_{\fr{m}_{5}}+y_{6}\cdot B|_{\fr{m}_{6}}.
 \]
In a similar way with Proposition \ref{F42},  we   conclude   that   the non-zero triples $A_{ijk}$ $(0\leq i, j, k\leq 6)$ associated to  the reductive decomposition (\ref{III3})   and   the left-invariant metric on  $\E_{6}$ given by (\ref{invIII3}),   are the following (up to permutation of indices): 
  \[
  A_{044}, \ A_{055}, \ A_{066}, \ A_{111}, \ A_{144},   \ A_{166} \  A_{222}, \ A_{244}, \  A_{255}, \  A_{333}, \ A_{344}, \ A_{355}, \ A_{445}, \ A_{456}.
  \]
 In particular, it is easy to see that $A_{155}=A_{266}=A_{366}=0$.
 \begin{remark}
 \textnormal{In the reductive decomposition (\ref{III3}) it is $\fr{k}_{2}\cong\fr{su}_{3}\cong\fr{k}_{3}$. This isomorphism  does not effect on  the behaviour of the Ricci tensor $\Ric_{\langle \ , \ \rangle}$  corresponding left-invariant metric $\langle \ , \ \rangle$. In particular,  by using root vectors corresponding to $\fr{k}_{2}$ and $\fr{k}_{3}$, it follows that $\Ric_{\langle \ , \ \rangle}(\fr{k}_{2}, \fr{k}_{3})=0$, hence $\Ric_{\langle \ , \ \rangle}$ is still diagonal.}
 \end{remark}
 
\subsection{The Ricci tensor and the structure constants}
Let us apply Lemma \ref{ric}    to get a first version of  the Ricci tensor   in terms of the parameters of $\langle \ , \ \rangle$, the dimensions $d_{i}$  and the non-zero triples $A_{ijk}$.
 \begin{prop}\label{ricg4}
The components ${r}_{i}$ of  the Ricci tensor $\Ric_{\langle \ , \ \rangle}$ associated to the left-invariant metric $\langle \ , \ \rangle$ on  $\E_6$  described  by (\ref{invIII3}), are given by
 \begin{equation*} 
\left\{\begin{array}{ll} 
r_0 &=  \displaystyle{\frac{u_{0}}{4d_0}\biggl(\frac{A_{044}}{{x_{1}}^{2}}+\frac{A_{055}}{{x_{2}}^{2}}+\frac{A_{066}}{{x_{3}}^{2}}
\biggr), } \quad 
r_1   = \displaystyle{\frac{A_{111}}{4d_{1}}\cdot\frac{1}{u_{1}} + 
\frac{u_{1}}{4d_1} \biggl(\frac{A_{144}}{{x_{1}}^{2}} +\frac{A_{166}}{{x_{3}}^{2}}\biggr),}
  \\ & \\
  r_2 &=  \displaystyle{\frac{A_{222}}{4d_{2}}\cdot\frac{1}{u_{2}} + 
\frac{u_{2}}{4d_2} \biggl( \frac{A_{244}}{{x_{1}}^{2}} +\frac{A_{255}}{{x_{2}}^{2}}\biggr),}\quad 
 r_3=  \displaystyle{\frac{A_{333}}{4d_{3}}\cdot\frac{1}{u_{3}} + 
\frac{u_{3}}{4d_3} \biggl( \frac{A_{344}}{{x_{1}}^{2}} +\frac{A_{355}}{{x_{2}}^{2}}\biggr),}
 \\ & \\
r_4 &=  \displaystyle{\frac{1}{2 x_{1}} -
\frac{1}{2d_4{x_{1}}^{2}}
\biggl(u_{0}\cdot A_{044}+u_{1}\cdot A_{144}+u_{2}\cdot A_{244}+u_{3}\cdot A_{344}+x_{2}\cdot A_{445} \biggr) 
 +  \frac{A_{456}}{2d_4}  
\biggl(\frac{x_{1}}{x_{2} x_{3}} - \frac{x_{2}}{x_{1} x_{3}}- \frac{x_{3}}{x_{1} x_{2}}
\biggr),  }
\\ & \\
r_5 & = \displaystyle{\frac{1}{2 x_{2}}   -\frac{1}{2d_{5}{x_{2}}^{2}}\bigg(u_{0}\cdot A_{055}+u_{2}\cdot A_{255} +u_{3}\cdot A_{355}\biggr)
+\frac{A_{445}}{4d_{5}}\biggl( \frac{x_{2}}{{x_{1}}^{2}} - \frac{2}{x_{2}}\biggr)
  +\frac{A_{456}}{2d_{5}}
\biggl(\frac{x_{2}}{x_{1} x_{3}} - \frac{x_{1}}{x_{2} x_{3}}- \frac{x_{3}}{x_{1} x_{2}}
\biggr), }
\\ & \\
r_6 & = \displaystyle{\frac{1}{2 x_{3}} -  
\frac{1}{2d_6} \cdot\frac{1}{{x_{3}}^{2}}\biggl(u_{0}\cdot A_{066}+u_{1}\cdot A_{166}\biggr)+\frac{A_{456}}{2d_{5}}
\biggl(\frac{x_{3}}{x_{1} x_{2}} - \frac{x_{1}}{x_{2} x_{3}}- \frac{x_{2}}{x_{1} x_{3}}
\biggr). } 
\end{array}
\right.
\end{equation*}
 \end{prop}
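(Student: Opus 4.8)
The plan is to obtain Proposition~\ref{ricg4} as a direct specialization of the general Ricci formula of Lemma~\ref{ric} to the reductive decomposition (\ref{III3}), once the non-zero structure constants are pinned down. First I would establish, by exactly the Lie-theoretic arguments used in the proof of Proposition~\ref{F42} (root-vector brackets together with the fact that $b_{2}(M)=1$ and $\Hgt(\al_{i_{o}})=3$ split $R_{M}^{+}$ into three layers $R_{1}^{+}\sqcup R_{2}^{+}\sqcup R_{3}^{+}$, forcing $[\fr{m}_{4},\fr{m}_{4}]\cap\fr{m}_{5}\neq 0$ and $[\fr{m}_{4},\fr{m}_{5}]\cap\fr{m}_{6}\neq 0$, plus the orthogonality-of-roots trick for the remaining cases and the painted Dynkin diagram for the ones involving $\fr{k}_{0}$), that the only non-vanishing triples are
\[
A_{044},\ A_{055},\ A_{066},\ A_{111},\ A_{144},\ A_{166},\ A_{222},\ A_{244},\ A_{255},\ A_{333},\ A_{344},\ A_{355},\ A_{445},\ A_{456},
\]
together with $A_{155}=A_{266}=A_{366}=0$. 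Since (\ref{III3}) decomposes $\fr{g}$ into pairwise inequivalent $\Ad(K)$-modules, $\Ric_{\langle \ , \ \rangle}$ is diagonal (using also the remark that the isomorphism $\fr{k}_{2}\cong\fr{k}_{3}$ does not destroy this), so it suffices to compute the scalars $r_{0},\dots,r_{6}$.

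Next I would substitute the identifications $y_{0}=u_{0}$, $y_{1}=u_{1}$, $y_{2}=u_{2}$, $y_{3}=u_{3}$, $y_{4}=x_{1}$, $y_{5}=x_{2}$, $y_{6}=x_{3}$ into the formula of Lemma~\ref{ric}, and evaluate each $r_{k}$ by retaining only the index pairs $(j,i)$ for which the triple attached to $k$ is one of those listed above. For $k=0,1,2,3$ only the triples that are ``diagonal into $\fr{p}$'' or internal to $\fr{k}_{k}$ survive: the constant term $\tfrac{1}{2y_{k}}$ is cancelled against the matching half of the third sum by means of the identity $\sum_{i,j}A_{kij}=d_{k}$ of Lemma~\ref{ric}, the internal triple $A_{kkk}$ (with $k=1,2,3$) contributes the $\tfrac{1}{4d_{k}u_{k}}$-term surviving from the middle sum, and the remaining middle-sum contributions give the $\tfrac{u_{k}}{4d_{k}}\bigl(A_{k44}/x_{1}^{2}+\cdots\bigr)$-terms; this reproduces the stated $r_{0},r_{1},r_{2},r_{3}$. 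For $k=4,5,6$ the ``internal'' pieces of the middle and third sums (the terms of the shape $A_{044}/u_{0}$, $A_{445}/x_{2}$, etc.) again cancel pairwise, the constant $\tfrac{1}{2y_{k}}=\tfrac{1}{2x_{k-3}}$ now survives, the surviving part of the third sum assembles into the $-\tfrac{1}{2d_{k}x_{k-3}^{2}}(\,\cdot\,)$-correction, and the two occurrences of the ``mixed'' triple $A_{456}$ (one from the middle sum, two from the third) combine—after tracking which of the three positions the index $4$, $5$, $6$ occupies—into the $\tfrac{A_{456}}{2d_{k}}\bigl(\tfrac{x_{k-3}}{\cdots}-\cdots\bigr)$-expression with the displayed signs. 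This is precisely the mechanism already seen in Corollaries~\ref{ricg2} and \ref{ricg3}.

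The only step that requires genuine care, rather than mechanical substitution, is the bookkeeping of the symmetry factors in the double sums: each ordered pair is counted once, so an unordered pair $\{i,j\}$ with $i\neq j$ contributes twice and $(k,k)$ once, and one must verify that triples such as $A_{446}$, $A_{556}$, $A_{566}$, $A_{266}$, $A_{366}$, $A_{255}$? (no—$A_{255}$ is present) really are the absent ones, so that no spurious $1/(x_{i}x_{j})$ or $1/(u_{i}x_{j})$ terms appear and the cancellations are exact. I expect this finite combinatorial check to be the main (and essentially the sole) obstacle; once the list of non-zero $A_{ijk}$ is fixed via the Proposition~\ref{F42}-type analysis, the rest is a transparent evaluation, and the resulting $r_{0},\dots,r_{6}$ are exactly those in the statement.
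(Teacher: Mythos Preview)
Your proposal is correct and follows essentially the same approach as the paper: the paper simply states the list of non-zero triples (obtained ``in a similar way with Proposition~\ref{F42}'', exactly as you propose), notes in a remark that diagonality survives despite $\fr{k}_{2}\cong\fr{k}_{3}$, and then says ``let us apply Lemma~\ref{ric}''. Your write-up just unpacks that application in more detail; one small refinement worth making explicit is that the cancellation of $\tfrac{1}{2y_{k}}$ for $k=0,1,2,3$ uses not only $\sum_{i,j}A_{kij}=d_{k}$ but also the structural fact that every surviving $A_{kij}$ has $i=j$, so that $y_{j}/y_{i}=1$ in the third sum.
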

We need now the values of the non-zero $A_{ijk}$. These are described by the following lemma.
\begin{lemma}\label{A456}
 For the reductive decomposition (\ref{III3}) and for the left-invariant metric $\langle \ , \ \rangle$ on the Lie group  $E_{6}=\E_6(3)$,    the non-zero $A_{ijk}$ are given explicitly  as follows:
\begin{eqnarray*}
&& A_{044}=1/4, \ A_{055}=1/2, \ A_{066}=1/4, \ A_{111}=1/2, \ A_{144}=9/4, \ A_{166}=1/4, \\
&& A_{222}=A_{255}=A_{333}=A_{355}=2, \  A_{244}=A_{344}=4, \ A_{445}=6, \ A_{456}=3/2.
\end{eqnarray*}
\end{lemma}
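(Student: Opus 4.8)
\textbf{Proof proposal for Lemma \ref{A456}.}
The plan is to mirror, step by step, the strategy used for Lemma \ref{gg2} (the Type $I_b(3)$ case) and Lemma \ref{A345} (the Type $II_b(3)$ case), now adapted to the four-summand isotropy algebra $\fr{k}=\fr{k}_0\oplus\fr{k}_1\oplus\fr{k}_2\oplus\fr{k}_3$ of $\E_6(3)$. First I would pin down the two ``geometric'' triples $A_{445}$ and $A_{456}$ directly from the unique K\"ahler--Einstein metric on the flag manifold $M=\E_6/(\SU_3\times\SU_3\times\SU_2\times\U_1)$: inserting $x_1=1,x_2=2,x_3=3$ into $\{\bar r_4-\bar r_5=0,\ \bar r_5-\bar r_6=0\}$ and solving, one gets (exactly as in \eqref{A234} and \eqref{A3456})
\[
A_{445}=\frac{d_4 d_5+2d_4 d_6-d_5 d_6}{d_4+4d_5+9d_6},\qquad
A_{456}=\frac{(d_4+d_5)d_6}{d_4+4d_5+9d_6},
\]
which with $(d_4,d_5,d_6)=(36,18,4)$ gives $A_{445}=6$ and $A_{456}=3/2$. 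Next I would record the ``Killing metric'' equations: setting $y_i=1$ for all $i$ in Proposition \ref{ricg4} and using that the bi-invariant metric on $\E_6$ is Einstein yields the system $\{r_0-r_1=r_1-r_2=\cdots=r_5-r_6=0\}$, six equations in the remaining unknowns $A_{044},A_{055},A_{066},A_{111},A_{144},A_{166},A_{222},A_{244},A_{255},A_{333},A_{344},A_{355}$, supplemented by the dimension identities $\sum_{i,j}A_{kij}=d_k$ from Lemma \ref{ric}.

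This alone will not determine all triples (there are more unknowns than independent equations), so the core of the argument is to introduce auxiliary fibrations, exactly as in the proofs of Lemmas \ref{gg2} and \ref{A345}. I would use the twistor fibration of $M=\E_6/(\U_1\times\SU_3\times\SU_3\times\SU_2)$ over an irreducible symmetric space: set $\fr{g}=\fr{h}\oplus\fr{n}$ with $\fr{h}:=\fr{k}\oplus\fr{p}_2\cong\fr{k}\oplus\fr{m}_5$ and $\fr{n}:=\fr{p}_1\oplus\fr{p}_3$; here $\fr{h}\cong\fr{su}_6$ (dimension count: $\dim\fr{h}=1+3+8+8+18=38-3=35=\dim\fr{su}_6$, and $[\fr{n},\fr{n}]\subset\fr{h}$), giving the fibration $H/K\to\E_6/K\to\E_6/\SU_6$ with symmetric base. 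The induced $\Ad(H)$-invariant metric $\langle\langle\ ,\ \rangle\rangle=v_1 B|_{\fr{h}}+v_2 B|_{\fr{n}}$ coincides with $\langle\ ,\ \rangle$ when $v_1=u_0=u_1=u_2=u_3=x_2$, $v_2=x_1=x_3$, forcing the Ricci relations $r_0=r_1=r_2=r_3=r_5$ and $r_4=r_6$. A second reductive decomposition of the form \eqref{fib2}, $\fr{g}=\fr{q}\oplus\fr{r}$ with $\fr{q}_1$ containing $\fr{k}_0\oplus\fr{k}_1\oplus\fr{p}_3$ (the $\bb{C}P^2$-fiber factor $\fr{su}_3$, say $\fr{k}_0\oplus\fr{k}_1\oplus\fr{p}_3\oplus\fr{k}_3$) and $\fr{r}:=\fr{p}_1\oplus\fr{p}_2$, will supply further Ricci identifications. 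I may also need a third fibration exploiting the two $\SU_3$ factors separately (e.g.\ $\fr{h}'=\fr{k}_0\oplus\fr{k}_2\oplus\fr{k}_3\oplus\fr{p}_2$ or a decomposition coming from a subgroup $\Sp_3\times\SU_2$ or $\F_4$-type piece) to break the symmetry between the $\fr{k}_2$- and $\fr{k}_3$-related triples and to separate $A_{244}$ from $A_{344}$, $A_{255}$ from $A_{355}$, and $A_{222}$ from $A_{333}$. Each fibration contributes linear relations among the $A_{ijk}$ once the values of $A_{445},A_{456}$ are substituted, and assembling all of them with the Killing-metric system should over-determine and thus uniquely fix every unknown. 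Finally I would cross-check $A_{111}$ and $A_{222}=A_{333}$ independently via Remark \ref{sakidea}: since $\fr{k}_1\cong\fr{su}_2$ sits in $\E_6$ with $c=B_{\SU_2}/B_{\E_6}$ and $\fr{k}_2\cong\fr{k}_3\cong\fr{su}_3$ with $c'=B_{\SU_3}/B_{\E_6}$, the identities $A_{111}=c\cdot\dim\fr{su}_2$ and $A_{222}=A_{333}=\tfrac12(c'\cdot\dim\fr{su}_3)$ (all roots of $\E_6$ having the same length) give the stated $1/2$ and $2$, which serves as a sanity check on the fibration computation.

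The main obstacle, as in the earlier lemmas, is bookkeeping: one must confirm that the chosen subalgebras $\fr{h},\fr{q},\fr{h}'$ really are subalgebras with the asserted bracket relations (using the painted Dynkin diagram of $\E_6(3)$ and the root-space description from the proof of Proposition \ref{F42}), correctly identify them up to isomorphism by dimension, and then verify that the accumulated linear system has a unique solution rather than a one-parameter family. The delicate point specific to $\E_6(3)$ is the presence of the \emph{two} isomorphic ideals $\fr{k}_2\cong\fr{k}_3\cong\fr{su}_3$: I must make sure the fibrations are chosen so as to treat them asymmetrically (the painted Dynkin diagram distinguishes them, since only one of the two $\SU_3$-subdiagrams is adjacent to $-\tilde\al$), otherwise the system will not separate the corresponding triples. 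Once all relations are collected, solving the linear system is routine algebra and yields the tabulated values.
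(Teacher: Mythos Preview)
Your overall strategy matches the paper's: K\"ahler--Einstein metric for $A_{445},A_{456}$, the Killing-metric system, the twistor fibration over an irreducible symmetric space, a second fibration over an isotropy-irreducible base, and the Killing-form ratio of Remark~\ref{sakidea}. Three points need correcting.

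First, your twistor subalgebra is misidentified. The subalgebra $\fr{h}=\fr{k}\oplus\fr{p}_2$ has dimension $1+3+8+8+18=38$, not $35$; it is $\fr{su}_6\oplus\fr{su}_2$, not $\fr{su}_6$, and the symmetric base is $\E_6/(\SU_6\times\SU_2)$, not $\E_6/\SU_6$. The paper exploits this splitting $\fr{h}=\fr{h}_1\oplus\fr{h}_2$ with $\fr{h}_1=\fr{k}_0\oplus\fr{k}_2\oplus\fr{k}_3\oplus\fr{p}_2\cong\fr{su}_6$ and $\fr{h}_2=\fr{k}_1\cong\fr{su}_2$, using a \emph{three}-parameter metric $v_1 B|_{\fr{h}_1}+v_2 B|_{\fr{h}_2}+v_3 B|_{\fr{n}}$; this yields more relations than your two-parameter version would. (Your ``third fibration'' candidate $\fr{h}'=\fr{k}_0\oplus\fr{k}_2\oplus\fr{k}_3\oplus\fr{p}_2$ is exactly this $\fr{h}_1$.) The second fibration in the paper is $\fr{q}=\fr{q}_1\oplus\fr{q}_2\oplus\fr{q}_3$ with $\fr{q}_1=\fr{k}_0\oplus\fr{k}_1\oplus\fr{p}_3\cong\fr{su}_3$, $\fr{q}_2=\fr{k}_2$, $\fr{q}_3=\fr{k}_3$, base $\E_6/(\SU_3)^3$.

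Second, and more importantly, the two fibrations together with the Killing-metric equations do \emph{not} determine everything: the paper obtains a one-parameter family $A_{222}=4-A_{255}$, $A_{333}=A_{255}$, $A_{355}=4-A_{255}$. No additional fibration will break this, because the outer automorphism of $\E_6$ swaps $\fr{k}_2$ and $\fr{k}_3$ (so your concern about ``breaking the symmetry'' is misplaced: the symmetry is genuine and forces $A_{244}=A_{344}$, etc.). The paper closes the system by computing $A_{222}$ directly via Remark~\ref{sakidea}: this step is \emph{essential}, not a sanity check.

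Third, your formula $A_{222}=\tfrac12(c'\cdot\dim\fr{su}_3)$ is wrong. Since all roots of $\E_6$ have the same length, there is no correction factor; the paper uses $A_{222}=c'\cdot\dim\fr{su}_3$ with $c'=B_{\SU_3}/B_{\E_6}$, giving $A_{222}=2$ and hence $A_{255}=A_{333}=A_{355}=2$.
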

 \begin{proof}
 The computation of $A_{445}$ and $A_{456}$  is based on the unique  K\"ahler-Einstein metric $y_4=1, y_5=2, y_6=3$ that $M=G/K=\E_6/(\U_1\times\SU_2\times\SU_3\times\SU_3)$ admits.   Thus, the system which defines the Killing metric $y_{i}=1$ $(i=0, \ldots 6)$, consists now of six equations and 12 unknowns.  For the construction of more equations, let us consider first the twistor fibration of $M=G/K$ over an irreducible symmetric space. Set 
 \[
 \fr{g}=\fr{h}\oplus\fr{n}, \quad \fr{h}:=\fr{h}_{1}\oplus\fr{h}_{2}, \quad \fr{h}_{1}:=\fr{k}_{0}\oplus\fr{k}_{2}\oplus\fr{k}_{3}\oplus\fr{p}_{2}, \quad \fr{h}_{2}:=\fr{k}_{1}\cong\fr{su}_{2}, \quad \fr{n}:=\fr{p}_{1}\oplus\fr{p}_{3}.
 \]
 This is a symmetric reductive decomposition of $\fr{g}$; for dimensional reasons we take $\fr{h}_{1}\cong\fr{su}_{6}$ and  the corresponding irreducible symmetric space $G/H$ is the coset $\E_6/(\SU_6\times\SU_2)$. Consider a left-invariant metric  on $\E_6$, given by $\langle\langle \ , \ \rangle\rangle=v_1\cdot B|_{\fr{h}_{1}}+v_2\cdot B|_{\fr{h}_{2}}+v_3\cdot B|_{\fr{n}}$, for some $v_1, v_2, v_3\in\bb{R}_{+}$.   For $v_1=u_0=u_2=u_3=x_2$, $v_2=u_1$, $v_3=x_1=x_3$ this metric coincides with  $\langle  \ , \ \rangle$ and the same holds for the corresponding Ricci tensors.  Hence we get the following equations:
  \begin{eqnarray*}
   d_{0}A_{244} -d_{2}(A_{044}+A_{066}) &=& d_{2}A_{344} -d_{3} A_{244}, \\
  -d_{2}A_{344}+d_{3} A_{244}, &=& d_{5}A_{344}(d_{4}+4d_{5}+9d_{6})-4d_{3} d_{4} d_{6} -d_{3}d_{4}d_{5}- d_{3}d_{5}d_{6}, \\ 
    d_{0}(A_{222}+A_{255}) - 
    d_{2}A_{055} &=&d_{2}(A_{333} + A_{355}) - d_{3}(A_{222}+ 
     A_{255}),\\
      -d_{2}(A_{333} + A_{355})+d_{3}(A_{222}+ 
     A_{255}) &=&(d_{4} +4d_{5}+9d_{6})(2d_{3}(A_{055}+ A_{255}+ A_{355})+d_{5}(A_{333}+A_{355}))\\
     &&   -8d_{5}^{2}d_{3}  + 8 d_{3} d_{4} d_{6}  - 16 d_{3} d_{5} d_{6},\\
    0&=& (d_{4} + 4 d_{5} + 9 d_{6}) (d_{4}A_{166} - d_{6}A_{144}), \\
   0&=& d_{4}A_{066}(d_{4}+ 4d_{5}+9d_{6}) -d_{6}(A_{044}+A_{244}+A_{344})(d_{4}+4d_{5}) \\
   &&- 9d_{6}^{2}(A_{044}+A_{244}+A_{344}) -3 d_{4} d_{6}^2+ d_{4}^2 d_{6}.
    \end{eqnarray*}
    Set now
        \[
    \fr{g}=\fr{q}\oplus\fr{r}, \quad \fr{q}:=\fr{q}_{1}\oplus\fr{q}_{2}\oplus\fr{q}_{3}, \quad \fr{q}_{1}:=\fr{k}_{0}\oplus\fr{k}_{1}\oplus\fr{p}_{3}, \quad \fr{q}_{2}:=\fr{k}_{2}, \quad \fr{q}_{3}:=\fr{k}_{3}, \quad \fr{r}:=\fr{p}_{1}\oplus\fr{p}_{2}.
    \]
    It follows that $\fr{q}_{i}\cong\fr{su}_{3}$ for any $i=1, 2, 3$ and $[\fr{q}, \fr{q}]\subset\fr{q}$, $[\fr{q}, \fr{r}]\subset\fr{r}$. Since $\fr{k}\subset\fr{q}$, this defines the   fibration
    \[
    \bb{C}P^{2}=\SU_{3}/\U_{2}\to \E_6/(\U_1\times\SU_2\times\SU_3\times\SU_3)\to \E_6/(\SU_3\times\SU_3\times\SU_3),
    \]
   where the base space $G/Q\cong\E_6/(\SU_3\times\SU_3\times\SU_3)$ is isotropy irreducible, see \cite{Bes}.   By  a similar procedure as before and after considering   a new left-invariant metric on $\E_6$, we get that
        \begin{eqnarray*}
    -d_{0}A_{144}+d_{1}(A_{044}+ A_{055}) &=&  d_{6}A_{144}(d_{4}+4d_{5}+9d_{6}) - 2 d_{1}d_{6}(d_{4}+d_{5}),\\   
    -d_{0}(A_{111}+ A_{166}) +d_{1}A_{066} &=& 2d_{1}(A_{066}+ A_{166})(d_{4}+4d_{5}+9d_{6}) +d_{6}(A_{111}+A_{166})(d_{4} +4d_{5}+9d_{6})\\
    && + 2 d_{1} d_{4} d_{6}- 4 d_{1} d_{5} d_{6} - 18 d_{1}d_{6}^{2}.
    \end{eqnarray*}
   Now,  a combination  of these equations  together with the system defined by the Killing metric, shows that $A_{044}=A_{066}=A_{166}=1/4$,  $A_{055}=A_{111}=1/2$,  $A_{144}=9/4$, $A_{244}=A_{344}=4$  and 
 \[
   A_{222} =4 - A_{255}, \quad   A_{333} = A_{255},  \quad  A_{355} = 4 - A_{255}.
   \] 
 However, it is $A_{222}=c\cdot \dim\fr{su}_{3}$, where $c=B_{\SU_{3}}/B_{\E_6}=4/24$. Thus, $A_{222}=2$ and   we also get $A_{255}=2=A_{333}=A_{355}$. In fact, we can verify the values of $A_{111}, A_{333}$ as follows:
 $A_{111}=(B_{\SU_{2}}/B_{\E_6})\cdot\dim\fr{su}_{2}=1/2$ and $A_{333}=(B_{\SU_{3}}/B_{\E_6})\cdot\dim\fr{su}_{3}=2(=A_{222})$.
    \end{proof}
 
 \subsection{Naturally reductive metrics} 
 For a Lie group $G\cong G(i_{o})$ of Type $III_{b}(3)$, 
 left-invariant metrics  on $\G =\E_6(3)$  which are $\Ad(K)$-inavariant are given by
 \begin{equation}\label{invI36}
 \langle \ , \  \rangle=u_{0}\cdot B|_{\fr{k}_{0}}+u_{1}\cdot B|_{\fr{k}_{1}}+u_{2}\cdot B|_{\fr{k}_{2}}+u_{3}\cdot B|_{\fr{k}_{3}}+x_{1}\cdot B|_{\fr{p}_{1}}+x_{2}\cdot B|_{\fr{p}_{2}}+x_{3}\cdot B|_{\fr{p}_{3}}. 
 \end{equation}

\begin{prop}\label{prop6.4}
If a left invariant metric $\langle \ , \  \rangle$ of the form $(\ref{invI36})$  on $G =\E_6(3)$ of Type $III_{b}(3)$   is naturally reductive  with respect to $G \times L$ for some closed subgroup $L$ of $G$, 
then one of the following holds: 

$(1)$ $u_0 = u_2= u_3 =x_2 $,   $x_{1} = x_{3}$ \quad 
$(2)$   $u_0 = u_1 =x_3$,   $x_{1} = x_{2}$  
\quad 
$(3)$ $ x_{1} = x_{2} = x_{3} $.

Conversely, 
 if  one of the conditions $(1)$, $(2)$, $(3)$  holds, then the metric 
 $\langle \ , \  \rangle$ of the form  $(\ref{invI36})$ is  naturally reductive  with respect to $G \times L$ for some closed subgroup $L$ of $G$.
  \end{prop}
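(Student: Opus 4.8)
The plan is to follow verbatim the template established in the proofs of Proposition \ref{prop4.6} and Proposition \ref{prop5.5}, since the Lie group $\E_6(3)$ of Type $III_{b}(3)$ is structurally identical except that the isotropy subalgebra $\fr{k}$ now has four ideals $\fr{k}_0\oplus\fr{k}_1\oplus\fr{k}_2\oplus\fr{k}_3$ rather than three. Let $\fr{l}$ be the Lie algebra of $L$ and distinguish the two cases $\fr{l}\subset\fr{k}$ and $\fr{l}\not\subset\fr{k}$. First I would treat the case $\fr{l}\not\subset\fr{k}$: letting $\fr{h}$ be the subalgebra of $\fr{g}$ generated by $\fr{l}$ and $\fr{k}$, the decomposition $\fr{g}=\fr{k}_0\oplus\fr{k}_1\oplus\fr{k}_2\oplus\fr{k}_3\oplus\fr{p}_1\oplus\fr{p}_2\oplus\fr{p}_3$ being an irreducible $\Ad(K)$-module decomposition forces $\fr{h}$ to contain at least one of $\fr{p}_1,\fr{p}_2,\fr{p}_3$.

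The three sub-cases are handled exactly as in Proposition \ref{prop5.5}. If $\fr{p}_1\subset\fr{h}$, then since $[\fr{p}_1,\fr{p}_1]\cap\fr{p}_2\neq\{0\}$ and $[\fr{p}_1,\fr{p}_2]\cap\fr{p}_3\neq\{0\}$ (these inclusions being guaranteed by $b_2(M)=1$ and the three-summand structure, cf. the discussion around \eqref{A456} and Proposition \ref{F42}), we successively get $\fr{p}_2,\fr{p}_3\subset\fr{h}$, hence $\fr{h}=\fr{g}$ and the metric is bi-invariant. If $\fr{p}_2\subset\fr{h}$ but $\fr{h}=\fr{k}\oplus\fr{p}_2$, then $(\fr{h},\fr{p}_1\oplus\fr{p}_3)$ is a symmetric pair, forcing $u_0=u_2=u_3=x_2$ and $x_1=x_3$ — here I must check that the relevant ideals of $\fr{k}$ pick up the $x_2$-value: since $\fr{h}=\fr{k}\oplus\fr{p}_2$ is the isotropy algebra of the twistor-type fibration with $\fr{h}_1\cong\fr{su}_6$ absorbing $\fr{k}_0,\fr{k}_2,\fr{k}_3,\fr{p}_2$ (exactly the fibration $\E_6/(\SU_6\times\SU_2)$ used in Lemma \ref{A456}), the three subalgebras $\fr{k}_0,\fr{k}_2,\fr{k}_3$ and $\fr{p}_2$ all carry the same metric coefficient; this gives condition $(1)$. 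If instead $\fr{h}\neq\fr{k}\oplus\fr{p}_2$, then $\fr{h}\cap\fr{p}_1\neq\{0\}$ or $\fr{h}\cap\fr{p}_3\neq\{0\}$, hence $\fr{h}\supset\fr{p}_1$ or $\fr{p}_3$, and again $\fr{h}=\fr{g}$, bi-invariant. Finally if $\fr{p}_3\subset\fr{h}$ with $\fr{h}=\fr{k}\oplus\fr{p}_3$, then $\fr{h}$ is the isotropy algebra of the fibration over $\E_6/(\SU_3\times\SU_3\times\SU_3)$ (the second fibration in Lemma \ref{A456}), with $\fr{q}_1\cong\fr{su}_3$ absorbing $\fr{k}_0,\fr{k}_1,\fr{p}_3$, so $\fr{p}_1\oplus\fr{p}_2$ is an irreducible $\Ad(H)$-module and the metric satisfies $u_0=u_1=x_3$, $x_1=x_2$, giving condition $(2)$; otherwise $\fr{h}=\fr{g}$ and the metric is bi-invariant. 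For the case $\fr{l}\subset\fr{k}$, the orthogonal complement $\fr{l}^{\bot}\supset\fr{k}^{\bot}=\fr{p}_1\oplus\fr{p}_2\oplus\fr{p}_3$, so Theorem \ref{ziller} forces $x_1=x_2=x_3$, which is condition $(3)$.

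For the converse, if $(1)$ holds the metric is naturally reductive with respect to $G\times L$ with $\fr{l}=\fr{k}\oplus\fr{p}_2$ by Theorem \ref{ziller}; if $(2)$ holds take $\fr{l}=\fr{k}\oplus\fr{p}_3$; if $(3)$ holds take $L=K$. I expect the main obstacle to be purely bookkeeping: one must verify that in the sub-cases $\fr{h}=\fr{k}\oplus\fr{p}_2$ and $\fr{h}=\fr{k}\oplus\fr{p}_3$ the subalgebra $\fr{h}$ is indeed one of the two homogeneous fibration spaces already identified in Lemma \ref{A456} (equivalently, that $\fr{k}\oplus\fr{p}_2\cong\fr{su}_6$ and $\fr{k}\oplus\fr{p}_3\cong\fr{su}_3\oplus\fr{su}_3\oplus\fr{su}_3$ are actually subalgebras of $\fr{e}_6$ on which $\Ad$ acts so that the displayed coefficient equalities are exactly the symmetric/irreducible conditions), and that the three $\fr{su}_3$-type ideals of $\fr{k}$ are correctly distributed among the $u_i$'s — there are genuinely two non-isomorphic $\fr{su}_3$ factors $\fr{k}_2,\fr{k}_3$ here, but since both lie inside $\fr{h}_1\cong\fr{su}_6$ in the twistor fibration they both acquire the coefficient $x_2$, so no asymmetry between $u_2$ and $u_3$ survives; this is why condition $(1)$ reads $u_0=u_2=u_3=x_2$ with $u_1$ free. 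There is no delicate analysis beyond correctly invoking the two fibrations from Lemma \ref{A456} and Theorem \ref{ziller}.
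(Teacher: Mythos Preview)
Your proposal is correct and follows essentially the same approach as the paper's own proof: the identical two-case split on $\fr{l}\subset\fr{k}$ versus $\fr{l}\not\subset\fr{k}$, the same three sub-cases on which $\fr{p}_i$ lies in $\fr{h}$, and the same invocation of Theorem~\ref{ziller} for the converse with $\fr{l}=\fr{k}\oplus\fr{p}_2$, $\fr{l}=\fr{k}\oplus\fr{p}_3$, or $L=K$. Your added commentary explicitly linking the two non-bi-invariant sub-cases to the two fibrations of Lemma~\ref{A456} is a helpful gloss that the paper leaves implicit; one small slip is that you call $\fr{k}_2,\fr{k}_3$ ``non-isomorphic'' when in fact both are $\fr{su}_3$ (they are distinct but isomorphic ideals), though your subsequent reasoning about them both sitting inside $\fr{h}_1\cong\fr{su}_6$ is correct and is precisely why condition~(1) forces $u_2=u_3$.
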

    \begin{proof}
   Let ${\frak l}$ be the Lie algebra of  $L$. Then we have either ${\frak l} \subset {\frak k}$  or ${\frak l} \not\subset {\frak k}$. 
First we consider the case of  ${\frak l} \not\subset {\frak k}$. Let ${\frak h}$ be the subalgebra of ${\frak g}$ generated by ${\frak l}$ and ${\frak k}$. 
Since 
$ \fr{g}=\fr{k}_{0}\oplus\fr{k}_{1}\oplus\fr{k}_{2}\oplus\fr{p}_{1}\oplus\fr{p}_{2}\oplus\fr{p}_{3}$ is an irreducible decomposition as $\mbox{Ad}(K)$-modules, we see that the Lie algebra $\frak h$  contains  at least one of  ${\frak p}_{1}$,  ${\frak p}_{2}$, ${\frak p}_{3}$. 
Let us start with the case $\fr{p}_{1}\subset\frak h$.
In this case, we get 
$\left[{\frak p}_{1}, {\frak p}_{1}\right] \cap {\frak p}_2 \neq\{0\}$ and hence $\fr{p}_{2}\subset\frak h$. Notice also that  $\left[{\frak p}_{1}, {\frak p}_{2}\right] \cap {\frak p}_3 \neq\{0\}$, i.e. $\frak h$ contains ${\frak p}_{3}$ as well. Therefore,  $\frak h = \frak g $  coincide, and  the $\Ad(L)$-invariant metric $\langle \ , \ \rangle$ of the form $(\ref{invI36})$ is bi-invariant. 
Now if $\fr{p}_{2}\subset\frak h$, then $\fr{ h} \supset {\frak k}\oplus {\frak p}_{2}$. If $\fr{ h} =\fr{k}\oplus {\frak p}_{2}$, then $(\fr{ h}, \fr{p}_{1}\oplus\fr{p}_{3})$ is a symmetric pair. Thus, the metric $\langle \ , \ \rangle$ of the form $(\ref{invI36})$ satisfies $u_0 = u_2 = u_3 =  x_2 $,   $x_{1} = x_{3}$. If $\fr{ h} \neq\fr{k}\oplus {\frak p}_{2}$, then it must be $\fr{ h}\cap {\frak p}_1\neq\{0\}$ or $\fr{ h}\cap {\frak p}_3\neq\{0\}$ and thus $\fr{ h} \supset {\frak p}_1$, or $\fr{ h} \supset {\frak p}_3$. Hence, we conclude again $\fr{h} = \fr{g}$ and the $\Ad(L)$-invariant metric $\langle \ , \  \rangle$ of the form $(\ref{invI36})$ must be bi-invariant. 
Finally, if $\frak h$  contains ${\frak p}_{3}$, then $\fr{ h} \supset {\frak k}\oplus {\frak p}_{3}$. If $\fr{ h} =\fr{k}\oplus {\frak p}_{3}$, then $\fr{h}$ is a semi-simple Lie algebra and ${\frak p}_{1}\oplus {\frak p}_{2}$ is an irreducible $\Ad(H)$-module. 
Hence, the metric $\langle \ , \ \rangle$ defined by $(\ref{invI36})$ satisfies the conditions $u_0 = u_1 = x_3 $,   $x_{1} = x_{2}$. 
If $\fr{h}\neq\fr{k}\oplus {\frak p}_{3}$, we similarly conclude that $\fr{h}\cap {\frak p}_1\neq\{0\}$ or $\fr{ h}\cap {\frak p}_2\neq\{0\}$. Thus it must be   $\fr{h} \supset {\frak p}_1$, or $\fr{ h} \supset {\frak p}_2$.  In the same way,  we obtain the identification $\fr{h} = \fr{g}$ and the $\Ad(L)$-invariant metric $\langle \ , \  \rangle$ given by $(\ref{invI36})$ is bi-invariant. 

Now,   consider the case ${\frak l} \subset {\frak k}$.  Since the  orthogonal complement
 ${\frak l}^{\bot}$ of ${\frak l}$ with respect to $B$ contains the  orthogonal complement 
${\frak k}^{\bot}$ of ${\frak k}$, we see that ${\frak l}^{\bot} \supset {\frak p}_{1} \oplus  {\frak p}_{2}\oplus  {\frak p}_{3}$.   
Since the  invariant metric $\langle \,\, , \,\, \rangle$ is naturally reductive  with respect to $G\times L$,  
 it follows that  $ x_{1} = x_{2} = x_{3} $, by  Theorem \ref{ziller}.   

Conversely, 
 if the condition $(1)$  holds, then  according to Theorem \ref{ziller} the metric 
 $\langle \ , \  \rangle$  given by  $(\ref{invI36})$ is  naturally reductive  with respect to $G \times L$, where $\fr{l} = \fr{k}\oplus {\frak p}_{2}$. If the condition $(2)$  holds, then the metric given by  $(\ref{invI36})$ is  naturally reductive  with respect to $G \times L$ where $\fr{l} = \fr{k}\oplus {\frak p}_{3}$. Finally, considering the  condition $(3)$, the  metric $\langle \ , \ \rangle$ defined by  $(\ref{invI36})$ is  naturally reductive  with respect to $G \times K$. This completes the proof.
  \end{proof}

  \subsection{The homogeneous Einstein equation}
Due to Proposition \ref{ricg4} and Lemma \ref{A456}, we are able now to describe explicitly the Einstein equation on $\E_6$ with respect to the left-invariant metric $\langle \ , \  \rangle$ given by (\ref{invI36}). 
 This has the form  (we normalise $\langle \ , \ \rangle$ by setting $x_1=1$):
\begin{equation}\label{eine63}
 \left\{ \begin{tabular}{ll}
 $g_0=$ &$3 {u_0} {u_1} {x_2}^2 {x_3}^2+3 {u_0} {u_1} {x_2}^2+6 {u_0} {u_1} {x_3}^2-9 {u_1}^2
   {x_2}^2 {x_3}^2-{u_1}^2 {x_2}^2-2 {x_2}^2 {x_3}^2=0$,\\\\  
 $g_1=$ &$9 {u_1}^2 {u_2} {x_2}^2 {x_3}^2+{u_1}^2 {u_2} {x_2}^2-6 {u_1} {u_2}^2 {x_2}^2 {x_3}^2-3 {u_1} {u_2}^2 {x_3}^2-3 {u_1} {x_2}^2 {x_3}^2+2 {u_2} {x_2}^2 {x_3}^2=0$, \\\\  
  $g_2=$ &$({u_2}-{u_3}) \left(2 {u_2} {u_3}
   {x_2}^2+{u_2} {u_3}-{x_2}^2\right)=0$,\\\\  
  $g_3=$ &${u_0} {u_3} {x_2}^2 {x_3}+9 {u_1} {u_3} {x_2}^2
   {x_3}+16 {u_2} {u_3} {x_2}^2 {x_3}+52 {u_3}^2 {x_2}^2 {x_3}+18 {u_3}^2 {x_3}+24 {u_3}
   {x_2}^3 {x_3}+6 {u_3} {x_2}^3$ \\&$-144 {u_3} {x_2}^2 {x_3}+6 {u_3} {x_2} {x_3}^2-6 {u_3}
   {x_2}+18 {x_2}^2 {x_3}= 0$,\\\\
   $g_4=$ &$-{u_0} {x_2}^2 {x_3}+4 {u_0} {x_3}-9 {u_1} {x_2}^2 {x_3}-16
   {u_2} {x_2}^2 {x_3}+16 {u_2} {x_3}-16 {u_3} {x_2}^2 {x_3}+16 {u_3} {x_3}-48 {x_2}^3
   {x_3}$ \\&$-18 {x_2}^3+144 {x_2}^2 {x_3}+6 {x_2} {x_3}^2-96 {x_2} {x_3}+18 {x_2}= 0$,\\\\
    $g_5=$ &$9 {u_0} {x_2}^2-4 {u_0} {x_3}^2+9 {u_1} {x_2}^2-16 {u_2} {x_3}^2-16 {u_3} {x_3}^2+24 {x_2}^3
   {x_3}^2+66 {x_2}^3 {x_3}-144 {x_2}^2 {x_3}$ \\&$-66 {x_2} {x_3}^3+96 {x_2} {x_3}^2+42 {x_2}
   {x_3}= 0$.
  \end{tabular} \right.
 \end{equation}

 Hence, here we need to separate our study in two cases: $u_2=u_3$ and $u_2\neq u_3$.
 
 \smallskip
 
 \noindent{\bf \underline{Case of $u_2=u_3$.}}
 
  Consider the polynomial ring $R= {\mathbb Q}[z,  u_0,  u_1,  u_2, u_3, x_2,  x_3] $ and the ideal $I$, generated by polynomials
$\{ g_0, \, g_1, \, g_2, \,u_2-u_3, \ g_4, \, g_5,  \,z \,  u_0 \, u_1 \,u_2 \, u_3 \, x_2 \, x_3-1\}$.   We take a lexicographic ordering $>$,  with $ z >  u_0 >  u_1>  u_2>  u_3  > x_2 > x_3$ for a monomial ordering on $R$. Then,   a  Gr\"obner basis for the ideal $I$, contains a  polynomial  of   $x_3$  given by 
$(x_3-1) (17 x_3 - 3) (105 {x_3}^2-180 x_3+43) h_1(x_3)$, where
$h_1(x_3)$ is a polynomial  of degree 101 explicitly defined as follows:
{ \small 
 \begin{equation*} 
\begin{array}{l}
 h_1(x_3)=
446201620912851710794055315043558727793812439040 {x_3}^{101} \\ -5970091094929099291107016178576455666569981198336 {x_3}^{100} \\+54956862112783810928181826738240314636476199469056 {x_3}^{99}  \\ -390811972141409570262352446028389195713538504523776 {x_3}^{98} \\+2345416005757619944748010845785735225833990004408320 {x_3}^{97}  \\ - 12296269949783674561034099631570563346793487654977536 {x_3}^{96} \\+57724805446986778564540591366311097869260812896485376 {x_3}^{95}  \\ - 246317806151607009947614038893190006132943659073929216 {x_3}^{94} \\
+965833450260597608670637228985038216735033541178353664 {x_3}^{93}  \\ - 3507391144656252588241576000834153338016240516439009280 {x_3}^{92} \\+11864289574806461749157687866946545721640388480080753216 {x_3}^{91} 
\end{array}
    \end{equation*} 
 \begin{equation*} 
\begin{array}{l}
 \\ - 37549398669469064618491404250349251298103313074223249536 {x_3}^{90} \\
+111556403718909975585042193985610935450985330327798025296 {x_3}^{89}  \\ - 311910444204478491525686781524742216649919870695292936208 {x_3}^{88} \\+822280356201276757927422346722674238555203030542292664576 {x_3}^{87}  \\
 - 2046807634124746479321033866947971592607665709250896466720 {x_3}^{86} \\
+4815217751927711346791066364373414448388380533686308362728 {x_3}^{85}  \\
 - 10713154223781970681173785420072062491556536536976223630728 {x_3}^{84} \\+22549161222430386183608267502748067727666572080394217257988 {x_3}^{83}  \\ - 44907345465673484752677835549915381616308978558104109929360 {x_3}^{82} \\+84618521132058565642549103143785317869429133164796407117101 {x_3}^{81}  \\ - 150844185455615567255483714719341206736546450798252724485465 {x_3}^{80} \\
+254364597537871570975284759118655801254332547323430302651150 {x_3}^{79}  \\ - 405743116891842974585208861357409219861363395354878349480024 {x_3}^{78} \\+612380460693768440871773749073945732272186880021261998877349 {x_3}^{77}  \\ - 875113963128364673685794098870412312567351427548775200004993 {x_3}^{76} \\+1185630843249395191547875465219341425290948445000653446938823 {x_3}^{75}  \\ - 1526156016359413728700771221981215402722504361600304503707071 {x_3}^{74} \\+1872140311833306055253112547604404320557478273540474816242205 {x_3}^{73}  \\ - 2197142616370071997752676286281654178936005418170556941864365 {x_3}^{72} \\+2477660626761403597845544417084070514622384049066587378036896 {x_3}^{71}  \\ - 2695279129437608065469390397221342766341704813704469316565954 {x_3}^{70} \\ 
 +2835433701981662882068588945651386104804048704393507162564893 {x_3}^{69}  \\ - 2884252164405306448117386421259048268986488851508923254464557 {x_3}^{68} \\
+2827426102309617415089093304345547856770953623242517510626736 {x_3}^{67}  \\ - 2653031260591475199015786134718569035816857413556039498720084 {x_3}^{66} \\
+2358669599996507371239530783357053968184783637965137255750024 {x_3}^{65}  \\
 - 1957846841895078664510239606394345202411065376438607044585244 {x_3}^{64} \\+1482575111628520660082626404685754443506000630228282037525432 {x_3}^{63}  \\
  - 977867340837642392768018881489052493366220469043864074527732 {x_3}^{62} \\+491766472172297444603170160453087191972251139268914030745220 {x_3}^{61}  \\ - 64289892666775037219425989232698561842251433476705352663704 {x_3}^{60}  \\ - 278945493495224974314286977888017233682365342439482314928900 {x_3}^{59} \\+526430447145328824729273526640420290370517101891583559600064 {x_3}^{58}  \\ - 678379950713096427271349581801112479268074670188010049765152 {x_3}^{57} \\+740804443402906260135865009076566078898436726677260064209660 {x_3}^{56}  \\ - 723437773277172187573344151721247865703874576057058786645840 {x_3}^{55} \\+641035988242594321731767126475011901128999801733089421531572 {x_3}^{54}  \\ - 512246964568992242392698713221172537445964128081045582227348 {x_3}^{53} \\+359435224624336029210539130176048914537249135075824824148656 {x_3}^{52}  \\ - 207440334442393198667039888458264495684556945767681966433896 {x_3}^{51} \\+75710441609060446524728615339027145373100155489714689483180 {x_3}^{50} \\+21604102072091727758542772876876306504838293754672061494810 {x_3}^{49}  \\ - 80248321816710686802873660978717285967176547645289112968438 {x_3}^{48} \\
+103727618132515579179402773133416119620715279258622028072932 {x_3}^{47}  \\ - 100868755076088189226682412281496766781111480676495550161140 {x_3}^{46} \\+82258536089640272288149509294477219539005077695487522749302 {x_3}^{45}  \\ - 57970183302251603419654961306043425453455813053820477148466 {x_3}^{44} \\
+34566800668606464343700458091411364463430345415525433898210 {x_3}^{43} \\ 
   - 16383644568171057304816744132324477326999822644552147091878 {x_3}^{42} \\
+4136127404028507683677730195509342689102464969628360294162 {x_3}^{41} \\+2448626876880142407515132852862076928913043337935695489050 {x_3}^{40} \\
 - 5091766004144148182025039566270136471217289325488871960720 {x_3}^{39} \\+5211357311516214470643681545481350157748897861517575943216 {x_3}^{38}  \\ - 4205038938600327672804921451310081054705566281185567057186 {x_3}^{37} \\+2880449505767857832367939657821857907905008739339817966542 {x_3}^{36}  \\ - 1722083524791071937275223955515869701707457252145375576992 {x_3}^{35} \\+883624776677143096193640273337726641926852894182302897924 {x_3}^{34}  \\ - 365844093248197813880318931085013840771534850915964839168 {x_3}^{33} \\+91100779597310425911980208315659416104663130431912801252 {x_3}^{32} \\+27847050547862877200320462747908632404588568320607377368 {x_3}^{31}  \\ - 62044289670560951692846051883485532537357245491675755676 {x_3}^{30} \\
 +58353887299451282295931377670775776235048712427538410548 {x_3}^{29}  
 \end{array}
    \end{equation*} 
 \begin{equation*} 
\begin{array}{l}
\\ - 42801139152078013889617819216603291344568555788701334096 {x_3}^{28} \\+27298429710753168330147448633665498685113651255938321004 {x_3}^{27}  \\ - 15754306291891017136056764020082795793908703387391423800 {x_3}^{26} \\
 +8385092507307451984906201504724689649995181449911987384 {x_3}^{25}  \\ - 4158959074188012109187108793403217630081388694570956516 {x_3}^{24} \\
 +1933988921439293195469245555839057789403000884600297488 {x_3}^{23}  \\ - 846254668982236036130963702540888323950320830947175956 {x_3}^{22} \\+349173361624441055757559121760687306723274054819456068 {x_3}^{21}  \\
 - 136002653968064858424211455167796115036642943021590992 {x_3}^{20} \\+50025097256033736148221913546357943308441010505896292 {x_3}^{19}  \\ - 17373295515964501643797136024825465698321434682820076 {x_3}^{18} \\+5693288809915569850120359913933561680515494371702321 {x_3}^{17}  \\
 - 1758699778656065182425515796649314364989143766487161 {x_3}^{16} \\+511400484439012231385964862157738050955388834378366 {x_3}^{15}  \\ - 139733354682245393321428617402456758035160677933092 {x_3}^{14} \\+35796740615382398885298967204849897416534790629933 {x_3}^{13}  \\ - 8574371869161530658987820760170185731254834290789 {x_3}^{12} \\+1913914136459751675483098693532003485746543254063 {x_3}^{11}  \\ - 396457437890074706831914824872045975693198088483 {x_3}^{10} \\+75819734083435665076464730202188368026690437353 {x_3}^{9}  \\ - 13299684783778380566640907764584095732685621109 {x_3}^{8} \\+2121856171516406265190177954088620136793636144 {x_3}^{7} \\ - 304497055879317382444174963304459359118141166 {x_3}^{6} \\+38715888461462869028868892744214611570993773 {x_3}^{5}  \\ - 4268028432749925091690300460592874748842233 {x_3}^{4} \\+394868683968983093689505159570909541213552 {x_3}^{3}  \\ - 29044225364799470649873570312140343108960 {x_3}^{2} \\+1526222320849667867688684374054814420480 {x_3} \\ - 43316614062737407568779520450405292288. 
     \end{array}
    \end{equation*}
}

 \smallskip
 
 Solving $h_1(x_3)=0$ numerically, we find five positive  and two negative solutions, which are given approximately by $x_3 \approx 0.18639713, x_3 \approx 0.34082479, x_3 \approx 1.1408077, x_3 \approx 1.4812096, x_3 \approx 2.3587740$ and  $x_3 \approx  -0.80052360, x_3 \approx -0.75956931$.  Moreover, real solutions of the system  $\{ g_0 =0, g_1=0, g_2=0, u_2 = u_3, g_4=0, g_5=0 \}$ with $ u_0 \, u_1 \,u_2 \, u_3 \, x_2 \, x_3 \neq 0$ have the form
 \begin{equation*} 
 \begin{array}{l}
 \{ u_0 \approx 0.19486610, \ u_1 \approx 0.15924707, \ u_2 = u_3 \approx 0.17659278, \ x_2 \approx 1.0016957, \ x_3 \approx 0.18639713 \},
 \\
\{ u_0 \approx 0.37393672, \ u_1 \approx  0.21919659,\ u_2 = u_3 \approx 1.1626641, \ x_2 \approx  1.0087519, \ x_3 \approx  0.34082479 \}, 
\\
\{ u_0 \approx 0.84212893, u_1 \approx 0.10872180,\ u_2 = u_3 \approx 0.178610960, \ x_2 \approx 0.58058297, \ x_3 \approx 1.1408077 \}, 
\\
\{ u_0 \approx 1.5046268, \ u_1 \approx 1.4017692, \ u_2 = u_3 \approx 0.23783921,\ x_2 \approx 1.0550442, \ x_3 \approx 1.4812096 \},  
\\
\{ u_0 \approx 2.3653572, \ u_1 \approx 0.16989789, \ u_2 = u_3 \approx 0.26106774, \ x_2 \approx 1.6915562, \ x_3 \approx 2.3587740 \} 
    \end{array}
    \end{equation*}
and 
 \begin{equation*} 
 \begin{array}{l}
 \{ u_0 \approx 1.3387553, \ u_1 \approx 0 .24195763, \ u_2 = u_3 \approx 0.34572988,\ x_2 \approx 3.8653526, \ x_3 \approx -0.80052360 \},
\\
\{ u_0 \approx  1.1975502, \ u_1\approx 0.69035070,  \ u_2 = u_3 \approx 0.37629335,  \ x_2 \approx 3.8406206, \ x_3 \approx -0.75956931 \}.
 \end{array}
    \end{equation*}
Therefore, we obtain five Einstein metrics which are non-naturally reductive  by Proposition \ref{prop5.5}. 
We also see that  these five metrics are non-isometric each other,   by computing the scale invariants  (cf.   \cite{Chry2}).

Consider now the case  $(x_3-1) (17 x_3 - 3) (105 {x_3}^2-180 x_3+43) =0$.
For  $105 {x_3}^2-180 x_3+43 = 0 $, we get that $ x_2= 1$, $u_0 = u_1 = x_3$ and   $35 x_3 + 43 u_2 - 60 = 0$, 
 as solutions of the system of equations $\{ g_0 =0, g_1=0, g_2=0, u_2 = u_3, g_4=0, g_5=0 \}$ with $ u_0 \, u_1 \,u_2 \, u_3 \, x_2 \, x_3 \neq 0$. 
It follows that the following values are real solutions of the homogeneous Einstein equation:
\begin{equation*} 
 \begin{array}{l}
\{ u_0 =  u_1  = x_3  = \displaystyle \frac{1}{105}
   \left(90-\sqrt{3585}\right),\ u_2 = u_3 = \displaystyle \frac{1}{129}
   \left(90+\sqrt{3585}\right),  \ x_2 =1 \}, \\ 
\\
\{ u_0 =  u_1  = x_3  = \displaystyle \frac{1}{105}
   \left(90+\sqrt{3585}\right),\ u_2 = u_3= \displaystyle \frac{1}{129}
   \left(90-\sqrt{3585}\right),  \ x_2 =1 \}.
\end{array}
\end{equation*} 
For $ x_3= 1$, we see that  $(x_2-1) (319 {x_2}^3-585 {x_2}^2+298 x_2-46) =0$, $u_0  =  u_2= x_2$ and $ 23 u_1-638 {x_2}^3+1808 {x_2}^2-1513 x_2+320=0$. In this case we obtain the following solutions:
 \begin{equation*} 
 \begin{array}{l}
\{ u_0  = u_2 = u_3 = x_2 \approx 0.32447061, \ u_1 \approx 0.10305040,  \ x_3 =1 \}, \\ 
\{  u_0  = u_2 = u_3  = x_2 \approx 0.40093736, \ u_1 \approx 1.6130682,  \ x_3 =1 \}, \\ 
\{  u_0  = u_2 = u_3 = x_2 \approx 1.1084478, \ u_1 \approx 0.19842410,  \ x_3 =1 \} 
\end{array}
\end{equation*} 
and $ u_0 = u_1 = u_2= u_3= x_2 = x_3 = 1$. 
Similarly, for $17 x_3 - 3=0$, we see that $x_2 = 1$ and $ u_0 = u_1 = u_2= u_3=  x_3 = 3/17$. 
According to  Proposition \ref{prop6.4}, these  values  induce only  naturally reductive Einstein metrics.

\smallskip 
 
 \noindent{\bf \underline{Case of $ 2 {u_2} {u_3} {x_2}^2+{u_2} {u_3}-{x_2}^2 =0$.}}
  
  We consider the polynomial ring $R= {\mathbb Q}[z,  u_0,  u_1,  u_2, u_3, x_2,  x_3] $ and an ideal $I$ generated by polynomials
$\{ g_0, \, g_1, \, g_2, \, 2 {u_2} {u_3} {x_2}^2+{u_2} {u_3}-{x_2}^2, \ g_4, \, g_5,  \,z \,  u_0 \, u_1 \,u_2 \, u_3 \, x_2 \, x_3-1\}$.   We take a lexicographic ordering $>$,  with $ z >  u_0 >  u_1>  u_2>  u_3  > x_2 > x_3$ for a monomial ordering on $R$. Then,  a  Gr\"obner basis for the ideal $I$ contains a  polynomial  of   $x_3$,  given by 
$(129 {x_3}^2-180 x_3+35) h_2(x_3)$, where
$h_2(x_3)$ is a polynomial  of degree 62 of the following form:
 { \small 
 \begin{equation*} 
\begin{array}{l} h_2(x_3)=
7176902311937267597352960000{x_3}^{62}  -63935286731053017507053568000{x_3}^{61}\\ +399843715184474955262341120000{x_3}^{60}  -1965764640832898839737014200320{x_3}^{59} \\ +8193254275314202221366521370624{x_3}^{58}  -29792334645086350198275186421248{x_3}^{57} \\ +96312668127950023620573089912976{x_3}^{56}  -279881292674846037180105828690048{x_3}^{55} \\ +736814302624650073499123557784400{x_3}^{54}  -1765609824944028233625757684850592{x_3}^{53} \\ +3858859512197506584743793325704936{x_3}^{52}  -7693673293317379815378916857414144{x_3}^{51} \\ +13978077490223112750713848520000136{x_3}^{50}  -23093289922724516044177470806375016{x_3}^{49} \\ +34590603147607745081071881647260329{x_3}^{48} -46777732301916799546957263457901784{x_3}^{47} \\ +56736923977077571150912980537328233{x_3}^{46}  -61045239892045312401412474185577596{x_3}^{45} \\ 
+57108713980664370406222711634860701{x_3}^{44}  -44533258600030824329782702723403172{x_3}^{43} \\ +25754099801447208802135893114691377{x_3}^{42}  -5506773889529575627395134456923560{x_3}^{41}  \\ 
-10900969369323449326468798316973849{x_3}^{40}+19761681445815159942981174871733088{x_3}^{39}  \\ -20188941574110618150578705304430161{x_3}^{38}+14202358308412638850212352987090788{x_3}^{37}  \\ -5742232611467996731383311668741917{x_3}^{36}  -1570190865515015457366830818567044{x_3}^{35} \\ +5648740711606386591873392674677879{x_3}^{34}  -5948880658615999334374796571219144{x_3}^{33} \\+3868106929529148762149605373828154{x_3}^{32}  -1258995580792601908577943150171216{x_3}^{31}  \\ -853879665000749089689937169600598{x_3}^{30}+1715750052119667014938302077428104{x_3}^{29} \\ -1499771919293409567273925312020942{x_3}^{28}+912888495260506476555714495569400{x_3}^{27} \\
-254675582893363617963644083458150{x_3}^{26}  -156524700943028756585375788114864{x_3}^{25} \\ +250591779632840858079012274593070{x_3}^{24}  -210754892342268589669423384800992{x_3}^{23} \\ +104847524519386925269074792690078{x_3}^{22}  -16046875591916438782483046815032{x_3}^{21}  \\ -19822089010062331168255611933090{x_3}^{20}+27934823512978080917038217731864{x_3}^{19}  \\ -18891540266011228264345732880890{x_3}^{18}+7806076572680194957966216721384{x_3}^{17}  \\ -1269024611811072417466760739747{x_3}^{16}  -1623952795000087557877271247768{x_3}^{15} \\ +1876612307555759003085287593757{x_3}^{14}  -1325495568850663913046822215564{x_3}^{13} \\ +722686827400045498790015516321{x_3}^{12} -329147016934629053928253191636{x_3}^{11} \\+129532305407715005005113350181{x_3}^{10}  -44529061623094804552954953960{x_3}^{9} \\ +13469000037841152898081791595{x_3}^{8} -3593330620975215538779860800{x_3}^{7} \\ +840650703590361114731290675{x_3}^{6}   -171221634223124842794559500{x_3}^{5} \\ +29979979441425878491638375{x_3}^{4}  -4380052206593637030292500{x_3}^{3} \\ 
+514712484479377144996875{x_3}^{2}  -45016029840810490875000 {x_3}+2277208974357528750000. 
     \end{array}
    \end{equation*}
}

Solving $h_2(x_3)=0$ numerically, we find four positive  and two negative solutions, which are given approximately by $x_3 \approx 1.0032864, x_3 \approx 0.25857941, x_3 \approx 1.3899161, x_3 \approx 1.4812096, x_3 \approx 1.0082270$ and  $x_3 \approx  -0.71677982, x_3 \approx -0.70718824$. In particular, real solutions of the system   $\{ g_0 =0, g_1=0, g_2=0, 2 {u_2} {u_3} {x_2}^2+{u_2} {u_3}-{x_2}^2=0, g_4=0, g_5=0 \}$ with $ u_0 \, u_1 \,u_2 \, u_3 \, x_2 \, x_3 \neq 0$ have the form
 \begin{equation*} 
 \begin{array}{l}
 \{ u0 \approx 0.35854650, u1 \approx 0.10372795, u2  \approx 0.42074677, u3  \approx 0.22748595, x2  \approx 0.34405535, x3  \approx 1.0032864 \},
 \\
  \{ u0 \approx 0.35854650, u1 \approx 0.10372795, u2  \approx  0.22748595, u3  \approx 0.42074677, x2  \approx 0.34405535, x3  \approx 1.0032864 \},
 \\
\{ u0 \approx 0.27724447, u1\approx 0.19525496, u2 \approx 1.4284934, u3 \approx 0.23400191, x2 \approx 1.0042307, x3 \approx 0.25857941 \}, 
 \\
\{ u0 \approx 0.27724447, u1\approx 0.19525496, u2 \approx 0.23400191, u3 \approx  1.4284934, x2 \approx 1.0042307, x3 \approx 0.25857941 \}, 
\\
\{ u0 \approx 1.5127009, u1 \approx 0.18053147, u2 \approx 1.3241010, u3 \approx 0.28487472, x2 \approx 1.2393058, x3 \approx 1.3899161 \}, 
\\
\{ u0 \approx 1.5127009, u1 \approx 0.18053147, u2 \approx 0.28487472, u3 \approx 1.3241010, x2 \approx 1.2393058, x3 \approx 1.3899161 \},
\\
\{ u0 \approx 0.47336119, u1 \approx 1.5881009, u2 \approx 0.25625661, u3 \approx 0.55478019, x2 \approx 0.44569962, x3 \approx 1.0082270 \},
\\
\{ u0 \approx 0.47336119, u1 \approx 1.5881009, u2 \approx 0.55478019, u3 \approx 0.25625661, x2 \approx 0.44569962, x3 \approx 1.0082270 \} 
    \end{array}
    \end{equation*}
and 
 \begin{equation*} 
 \begin{array}{l}
 \{ u0 \approx 1.0325160, u1 \approx 0.34059800, u2 \approx 1.1055012, u3 \approx 0.435626061, x2 \approx 3.6160749, x3 \approx -0.71677982 \},
\\
\{ u0 \approx 1.0325160, u1 \approx 0.34059800, u2 \approx 0.435626061, u3 \approx 1.1055012, x2 \approx 3.6160749, x3 \approx -0.71677982 \},
\\
\{ u0 \approx 1.0018570, u1 \approx 0.49052811, u2 \approx 1.0713232, u3 \approx 0.44960349, x2 \approx 3.6248195, x3 \approx -0.70718824 \},
\\
\{ u0 \approx 1.0018570, u1 \approx 0.49052811, u2 \approx 0.44960349, u3 \approx 1.0713232, x2 \approx 3.6248195, x3 \approx -0.70718824 \}.
 \end{array}
    \end{equation*}
Based now on Proposition \ref{prop6.4}, we conclude that these  eight Einstein metrics are non-naturally reductive.
Notice that metrics obtained by exchanging $u_2$ and $u_3$ are isometric. A final computation of the induced scale invariants allows us to deduce that  there are  
four Einstein metrics  which are non-isometric each other.

\smallskip

For  $129 {x_3}^2-180 x_3+35 = 0 $, we get that $ x_2= 1$, $u_0 = u_1 = x_3$,  $24 u_3 x_3 + 105 {u_3}^2 - 180 u_3 + 35 = 0$ 
and $8 x_3 + 35 u_2 + 35 u_3 - 60 =0 $ for solutions of the system of equations $\{ g_0 =0, g_1=0, g_2=0,  2 {u_2} {u_3} {x_2}^2+{u_2} {u_3}-{x_2}^2=0, g_4=0, g_5=0 \}$ with $ u_0 \, u_1 \,u_2 \, u_3 \, x_2 \, x_3 \neq 0$. 
Thus,  in this case   solutions of the homogeneous Einstein equation  are given by
\begin{equation*} 
 \begin{array}{l}
\{ u_0 =  u_1  = x_3  = \displaystyle \frac{1}{129}
   \left(90-\sqrt{3585}\right) =u_3,\ u_2 = \displaystyle \frac{1}{105}
   \left(90+\sqrt{3585}\right),  \ x_2 =1 \}, \\ 
\\
\{   u_0 =  u_1  = x_3  = \displaystyle \frac{1}{129}
   \left(90-\sqrt{3585}\right) =u_2,\ u_3 = \displaystyle \frac{1}{105}
   \left(90+\sqrt{3585}\right),  \ x_2 =1 \}, \\  \\ 
\{ u_0 =  u_1  = x_3 =  \displaystyle \frac{1}{129}
   \left(90+\sqrt{3585}\right) = u_3, \ u_2 = \displaystyle \frac{1}{105}
   \left(90-\sqrt{3585}\right),  \  x_2 =1 \}, \\   \\
\{  u_0 =  u_1  = x_3 =  \displaystyle \frac{1}{129}
   \left(90+\sqrt{3585}\right) = u_2 , \ u_3 = \displaystyle \frac{1}{105}
   \left(90-\sqrt{3585}\right),  \  x_2 =1 \}.
\end{array}
\end{equation*} 
By Proposition \ref{prop6.4}, these solutions
 give rise to   naturally reductive Einstein metrics.

 \end{document}